\newcommand{\R}{\mathbb{R}}
\newcommand{\D}{\mathcal{D}'}
\newcommand{\Hk}{\mathcal{H}_{k}}
\newtheorem{theorem}{Theorem}[section]
\newtheorem{lemma}[theorem]{Lemma}
\newtheorem{proposition}[theorem]{Proposition}
\theoremstyle{definition}
\newtheorem{remark}[theorem]{Remark}
\numberwithin{equation}{section}
\begin{document}
\title[Examples in the theory of Beurling's generalized primes]{Some examples in the theory of Beurling's generalized prime numbers}

\author[G. Debruyne]{Gregory Debruyne}
\thanks{G. Debruyne gratefully acknowledges support by Ghent University, through a BOF Ph.D. grant}
\address{G. Debruyne\\ Department of Mathematics\\ Ghent University\\ Krijgslaan 281 Gebouw S22\\ B 9000 Gent\\ Belgium}
\email{gdbruyne@cage.UGent.be}
\author[J.-C. Schlage-Puchta]{Jan-Christoph Schlage-Puchta}
\address{J.-C. Schlage-Puchta\\ Institut f\"{u}r Mathematik\\ Universit\"{a}t Rostock\\ 18051 Rostock\\ Germany}
\email{jan-christoph.schlage-puchta@uni-rostock.de}
\author[J. Vindas]{Jasson Vindas} 
\thanks{The work of J. Vindas was supported by the Research Foundation--Flanders, through the FWO-grant number 1520515N}
\address{J. Vindas\\ Department of Mathematics\\ Ghent University\\ Krijgslaan 281 Gebouw S22\\ B 9000 Gent\\ Belgium}
\email{jvindas@cage.UGent.be}
\subjclass[2010]{11N05, 11N80, 11M41, 11M45.}
\keywords{The prime number theorem;  zeta functions; Beurling generalized primes; Beurling generalized integers}

\begin{abstract}
Several examples of generalized number systems are constructed to compare various conditions occurring in the literature for the prime number theorem in the context of Beurling generalized primes.
\end{abstract}

\maketitle

\section{Introduction}

In this article we shall construct various examples of generalized number systems in order to compare three major conditions for  the validity of the prime number theorem (PNT) in the setting of  Beurling's theory of generalized primes.

Beurling's abstract formulation of the PNT is as follows \cite{bateman-diamond,beurling}. A set of \emph{generalized primes} is simply a sequence $P=\left\{p_k\right\}_{k=1}^{\infty}$ of real numbers tending to infinity with the only requirement that $1<p_{1}\leq p_{2}\leq \dots$. Its associated set of \emph{generalized integers} is the non-decreasing sequence $1=n_{0}<n_{1}\leq n_{2}\leq \dots$ arising as  all possible finite products of the generalized primes (occurring in $\{n_{k}\}_{k=1}^{\infty}$ as many times as they can be represented by  $p_{\nu_{1}}^{\alpha_1}p_{\nu_{2}}^{\alpha_2}\dots p_{\nu_m}^{\alpha_m}$ with $\nu_{j}< \nu_{j+1}$). Consider the counting functions of the generalized integers and primes,
\begin{equation}
\label{ibpneq1}
N(x)=N_{P}(x)=\sum_{n_{k}\leq x}1 \ \ \ \mbox{ and } \ \ \ \pi(x)=\pi_{P}(x)=\sum_{p_{k}\leq x}1\: ,
\end{equation}
where one takes multiplicities into account.
Beurling's problem is then to determine asymptotic requirements on $N$, as minimal as possible, which ensure the PNT in the form
\begin{equation}
\label{ibpneq3}
\pi(x)\sim\frac{x}{\log x}\: , \ \ \ x\rightarrow\infty\: .
\end{equation}

Three chief conditions on $N$ are the following ones. The first of such was found by Beurling in his seminal work \cite{beurling}. He showed that 
\begin{equation}
\label{ibpneq4}
N(x)=ax+O\left(\frac{x}{\log^{\gamma}x}\right)\: , \ \ \ x\to\infty\: ,
\end{equation}
where $a>0$ and $\gamma>3/2$, suffices for the PNT (\ref{ibpneq3}) to hold. A significant extension to this result was achieved by Kahane \cite{kahane1}. He proved, giving so a positive answer to a long-standing conjecture by Bateman and Diamond \cite{bateman-diamond}, that 
the $L^{2}$-hypothesis
\begin{equation}
\label{kahaneeq}
\int_{1}^{\infty}\left|\frac{\left(N(t)-at\right)\log t}{t}\right|^{2}\frac{\mathrm{d}t}{t}<\infty\: ,
\end{equation}
for some $a>0$, implies the PNT. We refer to the recent article \cite{zhang2014} by Zhang for a detailed account on Kahane's proof of the Bateman-Diamond conjecture (see also the expository article \cite{diamond2011}). Another condition yet for the PNT has been recently provided by Schlage-Puchta and Vindas \cite{s-v}, who have shown that 
\begin{equation}
\label{ibpneq5}
N(x)=ax+O\left(\frac{x}{\log^{\gamma}x}\right)\ \ \ \ (\mathrm{C})\: , \ \ \ x\to\infty\: ,
\end{equation}
with $a>0$ and $\gamma>3/2$ is also sufficient to ensure the PNT. The symbol $(\mathrm{C})$ stands for the \emph{Ces\`{a}ro sense} \cite{estrada-kanwal} and explicitly means that there is some (possibly large) $m\in\mathbb{N}$ such that the following average estimate holds: 
\begin{equation}
\label{ibpneq6}
\int_{1}^{x} \frac{N(t)-at}{t}\left(1-\frac{t}{x}\right)^{m}\mathrm{d}t=O\left(\frac{x}{\log^{\gamma}x}\right)\: , \ \ \ x\to\infty\: .
\end{equation}
 
It is obvious that Beurling's condition (\ref{ibpneq4}) is a particular instance of both (\ref{kahaneeq}) and (\ref{ibpneq5}). Furthermore, Kahane's PNT also covers an earlier extension of Beurling's PNT by Diamond \cite{diamond1969}. However, as pointed out in \cite{s-v,zhang2014}, the relation between (\ref{kahaneeq}) and (\ref{ibpneq5}) is less clear. Our main goal in this paper is to compare (\ref{kahaneeq}) and (\ref{ibpneq5}). We shall construct a family of sets of generalized primes fulfilling the properties stated in the following theorem:

\begin{theorem} \label{thmain} Let $1<\alpha<3/2$. There exists a generalized prime number system $P_{\alpha}$ 
whose generalized integer counting function $N_{P_{\alpha}}$ satisfies (for some $a_{\alpha}>0$)
\begin{equation}
\label{Cesaroeqgi}
N_{P_\alpha}(x)=a_{\alpha}x+O\left(\frac{x}{\log^{n}x}\right) \ \ \ (\mathrm{C})\:, \ \ \ \mbox{for } n=1,2,3,\dots, 
\end{equation}
but violates $(\ref{kahaneeq})$, namely,
\begin{equation}
\label{nkahane}
\int_{1}^{\infty}\left|\frac{\left(N_{P_{\alpha}}(t)-a_{\alpha}t\right)\log t}{t}\right|^{2}\frac{\mathrm{d}t}{t}=\infty\: . 
\end{equation}
Moreover, these generalized primes satisfy the PNT with remainder
\begin{equation}
\label{ibpneq7}
\pi_{P_{\alpha}}(x) = \frac{x}{\log x} + O\left(\frac{x}{\log^{\alpha}x}\right)\ .
\end{equation}
\end{theorem}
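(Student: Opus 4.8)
The plan is to build $P_\alpha$ as a discretization of a \emph{continuous} generalized prime number system with prescribed analytic behaviour. I would look for the zeta function in the factored form
\[
\zeta_{P_\alpha}(s)=\zeta_{0}(s)\exp\!\Big(\int_{1}^{\infty}x^{-s}\,\mathrm{d}\nu(x)\Big),
\qquad
\nu(x)=\varepsilon\,\chi(x)\,\frac{x\sin\big((\log x)^{1+\eta}\big)}{(\log x)^{\alpha}},
\]
where $\zeta_{0}$ is a classical continuous analogue of $\zeta_{\mathrm{Riemann}}$ — the simplest choice being $\zeta_{0}(s)=s/(s-1)$, whose Riemann prime-power measure $\mathrm{d}\Pi_{0}=\tfrac{x-1}{x\log x}\,\mathrm{d}x$ is nonnegative, whose integer counting function is $N_{0}(x)=x$, and for which $\Pi_{0}(x)=\operatorname{li}(x)+O(\log\log x)$ — while $\chi$ is a smooth cut-off vanishing near $x=1$, $\varepsilon>0$ is small, and $\eta>0$ is a parameter. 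The exponents in $\nu$ are forced by two competing demands. The instantaneous frequency $(\log x)^{\eta}$ of the chirp must tend to $\infty$, so that repeated averaging annihilates the oscillation; this forces $\eta>0$. On the other hand $\mathrm{d}\nu=\nu'(x)\,\mathrm{d}x$ has leading density $\asymp(\log x)^{\eta-\alpha}$, so to keep $\mathrm{d}\Pi_{P_\alpha}=\mathrm{d}\Pi_{0}+\mathrm{d}\nu\ge 0$ (hence $\mathrm{d}N_{P_\alpha}\ge 0$) one needs $(\log x)^{\eta-\alpha}=o(1/\log x)$, i.e.\ $\eta<\alpha-1$; this also makes $\nu'(e^{\,\cdot})\in L^{1}(0,\infty)$. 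Choosing $0<\eta<\alpha-1$ is possible precisely because $\alpha>1$, and the upper bound $\alpha<3/2$ will be exactly what makes the failure of \eqref{kahaneeq} a robust (power-type) divergence.

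I would next record the analytic properties of $\zeta_{P_\alpha}$. Since $\nu'(e^{\,\cdot})\in L^{1}$ and oscillates, the factor $\exp(\int x^{-s}\mathrm{d}\nu)$ is entire, zero-free, bounded, and $\to1$ as $|\Im s|\to\infty$ along $\Re s=1$; moreover $\int_{1}^{\infty}x^{-s}\mathrm{d}\nu(x)$ is $C^{\infty}$ at $s=1$ because all logarithmic moments of $\nu'(e^{\,\cdot})$ converge, thanks to the chirp. Hence $\zeta_{P_\alpha}$ has a single simple pole at $s=1$, of residue $a_{\alpha}=\exp(\int_{1}^{\infty}x^{-1}\mathrm{d}\nu)>0$, is analytic and zero-free on $\{\Re s\ge1\}\setminus\{1\}$, and $\zeta_{P_\alpha}(s)-a_{\alpha}/(s-1)$ extends to a $C^{\infty}$ function on a one-sided neighbourhood of $s=1$. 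The prime number theorem with remainder is then immediate, because $\Pi_{P_\alpha}=\Pi_{0}+\nu$ is prescribed explicitly: Möbius inversion gives $\pi_{P_\alpha}(x)=\Pi_{0}(x)+\nu(x)+O(\sqrt{x})=\operatorname{li}(x)+O(x/\log^{\alpha}x)=x/\log x+O(x/\log^{\alpha}x)$ (using $\alpha<2$), which is \eqref{ibpneq7}. For the integers one must pass from $\zeta_{P_\alpha}$ back to $N_{P_\alpha}$; expanding $\mathrm{d}N_{P_\alpha}=\mathrm{d}N_{0}*\exp^{*}(\mathrm{d}\nu)$ as a Mellin convolution — equivalently, invoking a two-sided Tauberian theorem with the analytic input above, together with a stationary-phase analysis to capture the chirp — yields
\[
N_{P_\alpha}(x)=a_{\alpha}x+\nu(x)+R(x),
\]
where the remainder $R$ absorbs the (here vanishing) base error, the higher convolution powers $\mathrm{d}\nu^{*k}$ with $k\ge2$, and subdominant pieces; the point is that $R$ is both $O_{A}(x/\log^{A}x)$ in the Cesàro scale and square-integrable against Kahane's weight.

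It remains to discretize and to verify the three assertions. By the standard discretization procedure of Beurling theory one produces an honest generalized prime number system $P_\alpha$ with $\pi_{P_\alpha}$ and $N_{P_\alpha}$ altered by $O(\log x)$ and by $O(\sqrt{x}\,(\log x)^{O(1)})$ respectively, which is harmless for all three claims. For \eqref{Cesaroeqgi}: after $t=e^{v}$, the $m$-th Cesàro mean in \eqref{ibpneq6} of $\nu(t)/t$ becomes a convolution of the chirp against the fixed kernel $(1-e^{-w})^{m}e^{-w}$, whose Fourier transform is $O(\tau^{-(m+1)})$; evaluated at the instantaneous frequency $\tau\asymp(\log x)^{\eta}$ this gives $O\big(x/(\log x)^{\alpha+(m+1)\eta}\big)$, which beats $x/\log^{n}x$ once $m$ is large, while $R$ contributes $O_{A}(x/\log^{A}x)$ trivially. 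For \eqref{nkahane}: writing $v=\log t$, the $\nu$-contribution to Kahane's integral is
\[
\asymp\int^{\infty}\frac{\sin^{2}(v^{1+\eta})}{v^{2(\alpha-1)}}\,\mathrm{d}v=\tfrac12\int^{\infty}\frac{\mathrm{d}v}{v^{2(\alpha-1)}}+(\text{a convergent oscillatory term}),
\]
and the first integral \emph{diverges} exactly because $2(\alpha-1)<1$, i.e.\ $\alpha<3/2$; since the $R$-contribution is finite, \eqref{nkahane} follows.

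The main obstacle is precisely the tension just exhibited: nonnegativity of $\mathrm{d}\Pi_{P_\alpha}$ caps how rough the perturbation may be — forcing $\nu$ to be a chirp of amplitude $x(\log x)^{-\alpha}$ with $\alpha>1$ and with chirp exponent squeezed into $(1,\alpha)$ — whereas the failure of Kahane's $L^{2}$-hypothesis requires the perturbation to be of size at least $x(\log x)^{-3/2}$ in the mean-square sense, i.e.\ $\alpha<3/2$; this is why the construction succeeds exactly on $1<\alpha<3/2$. The secondary difficulty is the honest control of $N_{P_\alpha}$ — above all of the higher convolution powers $\mathrm{d}\nu^{*k}$ — in both the Cesàro and the $L^{2}$ senses, which requires careful oscillatory-integral (stationary-phase / van der Corput) estimates. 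The discretization step is routine.
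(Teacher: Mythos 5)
Your construction is in the same spirit as the paper's — a chirp perturbation of a continuous prime measure, with the exponent window $1<\alpha<3/2$ coming from the competition between positivity and the failure of the $L^2$-bound — but the route you take to the generalized \emph{integer} counting function is genuinely different and, as written, has two serious gaps.

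The paper never attempts to extract an asymptotic formula for $N(x)-a_\alpha x$. Instead it works entirely at the level of $\zeta$, via two lemmas (Lemma \ref{lemsufcondcesaro}: polynomial growth of $F^{(j)}(1+it)$ gives the Cesàro estimate; Lemma \ref{lemnvwkahane}: Kahane's $L^2$-condition is \emph{equivalent} to $(\zeta(1+it)/t)'H(|t|-1)\in L^2$). The required growth of $\zeta_{C,\alpha}$ and its derivatives on $\Re e\,s=1$ is then obtained by a Littlewood--Paley decomposition plus stationary phase applied to the singular integral $K(s)=\mathrm{F.p.}\int_0^\infty u^{-1}\cos(u^\alpha)e^{-(s-1)u}\,\mathrm{d}u$ (Theorem \ref{thlacseries}). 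You instead claim the explicit representation $N_{P_\alpha}(x)=a_\alpha x+\nu(x)+R(x)$ with $R$ small both pointwise-Cesàro and in $L^2$. This is not a formality: unwinding $\mathrm{d}N=\mathrm{d}N_0*\exp^{\ast}(\mathrm{d}\nu)$ gives $N(x)-a_\alpha x=-x\int_x^\infty y^{-1}\mathrm{d}\Lambda(y)$ with $\mathrm{d}\Lambda=\exp^{\ast}(\mathrm{d}\nu)$, so even the \emph{first-order} piece is a tail integral against $\mathrm{d}\nu$, not $\nu(x)$ itself, and the contributions of $\mathrm{d}\nu^{*k}$ for $k\ge 2$ must be estimated in the same two norms. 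You acknowledge this ("requires careful oscillatory-integral estimates") but supply no argument; this is precisely the content the paper packages into Theorem \ref{thlacseries} together with Lemmas \ref{lemsufcondcesaro} and \ref{lemnvwkahane}.

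The second gap is the discretization, and this one is an actual error. You assert that passing from the continuous system to an honest set of generalized primes alters $N_{P_\alpha}$ by $O(\sqrt{x}\,(\log x)^{O(1)})$, which is "harmless". It is not: discretization changes the \emph{leading constant}. For the paper's construction $p_k=\Pi_{C,\alpha}^{-1}(k)$, the residue of $\zeta_{P_\alpha}$ at $s=1$ is $a_\alpha=\exp\bigl(-\gamma(1-1/\alpha)+\int_1^\infty x^{-1}\mathrm{d}(\Pi-\Pi_C)(x)\bigr)$, which differs from the continuous constant $e^{-\gamma(1-1/\alpha)}$; hence $N_{P_\alpha}(x)-N_{C,\alpha}(x)\asymp x$, not $O(\sqrt{x})$. (Theorem \ref{thDiamond} for $\alpha=1$ makes the same point: $N_{P_1}(x)\sim cx$ with $c\neq 1$.) More importantly, even after re-centering at the correct $a_\alpha$, you still need to show that the discretization does not destroy the delicate cancellation: the paper's Section \ref{the discrete example} proves the bound $\int_1^\infty x^{-1-it}\,\mathrm{d}(\Pi-\Pi_C)(x)=O(\log\log|t|)$ (Lemma \ref{lembegrenzingzeta}), for which it needs the Hoheisel--Ingham type prime-gap estimate $p_{r+1}-p_r<p_r^{2/3}\log p_r$ (Lemma \ref{lemafschattingpriemafstand}) and a Mertens estimate (Lemma \ref{correcprimes}). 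The $\log\log$ quality of this bound is used to guarantee the lower bound $\bigl|\exp(\int x^{-1-it}\,\mathrm{d}(\Pi-\Pi_C))\bigr|\gg\log^{-m}|t|$, without which the divergence in the Kahane integral could be swamped. None of this is "routine", and your proof cannot go through without an argument of this type adapted to your $\nu$.
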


Our method for establishing Theorem \ref{thmain} is first to construct examples of \emph{continuous} generalized number systems witnessing the desired properties. For it, we shall translate in Section \ref{aux} the conditions (\ref{Cesaroeqgi}) and (\ref{nkahane}) into analytic properties of zeta functions.  Our continuous examples are actually inspired by the one Beurling gave in \cite{beurling} to show that his theorem is sharp, that is, an example that satisfies (\ref{ibpneq4}) for $\gamma =3/2$ but for which the PNT  (\ref{ibpneq3}) fails. Concretely, in Section \ref{the continuous example} we study the associated zeta functions $\zeta_{C,\alpha}$ to the family of absolutely continuous Riemann prime counting functions
\begin{equation}
\label{Riemannconteq}
 \Pi_{C,\alpha}(x) = \int^{x}_{1} \frac{1-\cos(\log^{\alpha} u)}{\log u } \mathrm{d}u\:, \  \ \ x\geq1\: .
\end{equation}
If $\alpha = 1$ in (\ref{Riemannconteq}), this reduces to the example of Beurling, whose associated zeta function is $\zeta_{C,1}(s)=(1+1/(s-1)^{2})^{1/2}$. In case $\alpha>1$, explicit formulas for the zeta function of (\ref{Riemannconteq}) are no longer available, which makes its analysis considerable more involved than that of Beurling's example. In the absence of an explicit formula, our method rather relies on studying qualitative properties of the zeta function, which will be obtained in Theorem \ref{thlacseries} via the Fourier analysis of certain related singular oscillatory integrals. As we show, the condition $1<\alpha<3/2$ from Theorem \ref{thmain} is connected with the asymptotic behavior of the derivative of $\zeta_{C,\alpha}(s)$ on $\Re e\:s=1$.

The next step in our construction for the proof of Theorem \ref{thmain} is to select a discrete set of generalized primes $P_{\alpha}$ whose prime counting function $\pi_{P_{\alpha}}$ is sufficiently close to (\ref{Riemannconteq}). We follow here a discretization idea of Diamond, which he applied in \cite{diamond2} for producing a discrete example showing the sharpness of Beurling's theorem. We prove in Section \ref{the discrete example} that the set of generalized primes 
\begin{equation}
\label{eqdiscrete} P_{\alpha}=\{p_{k}\}_{k=1}^{\infty}\:, \ \ \ p_{k}=\Pi_{C,\alpha}^{-1}(k)\:,
\end{equation} 
satisfies all requirements from Theorem \ref{thmain}. 

Note that Diamond's example from \cite{diamond2} is precisely the case $\alpha=1$ of (\ref{eqdiscrete}). However, it should be also noticed that the analysis of our example (\ref{eqdiscrete}) that we carry out in Section \ref{the discrete example} is completely different from that given in \cite{diamond2}. Our arguments rely on suitable bounds for the associated zeta functions and their derivatives. Moreover, our ideas lead to more accurate asymptotic information for the generalized integer counting function of Diamond's example. We give a proof of the following theorem in Section \ref{Diamond example}.  

\begin{theorem}
\label{thDiamond} Let $P_{1}$ be the set of generalized primes $(\ref{eqdiscrete})$ corresponding to $\alpha=1$. There are constants $c$, $\{d_{j}\}_{j=0}^{\infty}$, and $\{\theta_{j}\}_{j=0}^{\infty}$ such that $N_{P_{1}}$ has the following asymptotic expansion
\begin{align}
\label{diamondexasympexp}
N_{P_{1}}(x)&\sim cx+ \frac{x}{\log^{3/2}x}\sum^{\infty}_{j=0}d_{j}\frac{\cos(\log x+\theta_{j})}{\log^{j}x}\\
\nonumber
&= cx+d_{0}\frac{x\cos(\log x+ \theta_{0})}{\log^{3/2} x}+ O\left(\frac{x}{\log^{5/2}x}\right), \quad x\to\infty\: ,
\end{align}
with $c>0$ and $d_{0}\neq0$, while the PNT does not hold for $P_{1}$.
\end{theorem}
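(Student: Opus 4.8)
The plan is to recover the asymptotics of $N_{P_{1}}$ from the analytic behaviour of $\zeta_{P_{1}}$ near $\Re e\, s=1$, and to obtain the failure of the PNT directly from the discretization $\pi_{P_{1}}(x)=\lfloor\Pi_{C,1}(x)\rfloor$. First I would isolate the singularity structure of $\zeta_{P_{1}}$. Since $\pi_{P_{1}}(x)=\Pi_{C,1}(x)+O(1)$, passing to the Riemann prime counting functions gives $\Pi_{P_{1}}(x)-\Pi_{C,1}(x)=-\{\Pi_{C,1}(x)\}+O(\sqrt{x}/\log x)$; taking Mellin transforms and using that $\{\,\cdot\,\}$ is bounded, the function $h(s):=\log\zeta_{P_{1}}(s)-\log\zeta_{C,1}(s)$ extends holomorphically to $\Re e\, s>1/2$, and the oscillatory‑integral estimates underlying Section \ref{the discrete example}, specialised to $\alpha=1$, show that $h$ and $h'$ are polynomially bounded on vertical lines there. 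Combining this with the known formula $\zeta_{C,1}(s)=(1+(s-1)^{-2})^{1/2}=\sqrt{(s-1)^{2}+1}\,/(s-1)$ yields, by analytic continuation, $\zeta_{P_{1}}(s)=\dfrac{\sqrt{(s-1)^{2}+1}}{s-1}\,e^{h(s)}$ on a neighbourhood of $\Re e\, s=1$, once the square‑root branch cuts are routed to the left of that line. Hence $\zeta_{P_{1}}$ is holomorphic and nonvanishing on $\Re e\, s=1$ apart from a simple pole at $s=1$, with residue $e^{h(1)}>0$ (since $h$ is real on $(1,\infty)$), and square‑root branch points at $s=1\pm i$, near which $\zeta_{P_{1}}(s)=(s-s_{0})^{1/2}G_{s_{0}}(s)$ with $G_{s_{0}}$ holomorphic and $G_{1\pm i}(1\pm i)$ a nonzero multiple of $e^{h(1\pm i)}$.

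Next I would carry out a contour integration. Starting from a Riesz‑mean (smoothed) Perron representation of the counting function, deform the contour from $\Re e\, s=\sigma>1$ to one that detours slightly to the right of $s=1$, wraps tightly around the branch cuts issued from $1\pm i$, and otherwise runs along a vertical line $\Re e\, s=1-c$ for a fixed small $c>0$. The pole at $s=1$ contributes $cx$ with $c=e^{h(1)}>0$. Around a branch point $s_{0}\in\{1+i,1-i\}$, writing $\zeta_{P_{1}}(s)\frac{x^{s}}{s^{k+1}}=(s-s_{0})^{1/2}G_{s_{0}}(s)\frac{x^{s}}{s^{k+1}}$ and substituting $s=s_{0}-\tau/\log x$, a Hankel‑contour/Watson's‑lemma argument produces an asymptotic series $\dfrac{x^{s_{0}}}{\log^{3/2}x}\sum_{j\geq0}b_{j}^{\pm}\log^{-j}x$. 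Because $\zeta_{P_{1}}(\bar s)=\overline{\zeta_{P_{1}}(s)}$, the contributions of $1+i$ and $1-i$ are complex conjugates, and with $x^{1\pm i}=x\,e^{\pm i\log x}$ they combine into $\dfrac{x}{\log^{3/2}x}\sum_{j}d_{j}\dfrac{\cos(\log x+\theta_{j})}{\log^{j}x}$, where $d_{j}=2|b_{j}^{+}|$ and $\theta_{j}=\arg b_{j}^{+}$; in particular $d_{0}=2|b_{0}^{+}|\neq0$, since $b_{0}^{+}$ is a nonzero multiple of $e^{h(1+i)}$. The vertical part $\Re e\, s=1-c$ contributes $O(x^{1-c})$, and truncating the branch series at $j=M$ leaves a tail $O(x/\log^{M+5/2}x)$, so the smoothed counting function admits the expansion to any prescribed accuracy $O(x/\log^{A}x)$.

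It then remains to pass from the smoothed version to the honest, non‑Cesàro expansion of $N_{P_{1}}$. Since $N_{P_{1}}$ is non‑decreasing, a standard finite‑difference Tauberian step recovers $N_{P_{1}}$ from the Riesz means; it costs a factor of two in the exponent of $\log x$, which is harmless because the contour argument provides the smoothed expansion with an arbitrarily large such exponent. This de‑smoothing — together with the need for the bounds on $\zeta_{P_{1}}$ and $\zeta_{P_{1}}'$ from Section \ref{the discrete example} to be sharp enough both to justify the contour shift and to control all error terms to the stated precision — is the main technical obstacle; by contrast, the determination of the branch structure and of the oscillating coefficients is essentially the explicit computation above.

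Finally, the failure of the PNT is elementary. From $\pi_{P_{1}}(x)=\Pi_{C,1}(x)+O(1)$ and $\Pi_{C,1}(x)=\mathrm{Li}(x)-\int_{2}^{x}\frac{\cos\log u}{\log u}\,\mathrm{d}u+O(1)$, an integration by parts gives $\int_{2}^{x}\frac{\cos\log u}{\log u}\,\mathrm{d}u=\dfrac{x\cos(\log x-\pi/4)}{\sqrt{2}\,\log x}+O(x/\log^{2}x)$, whence $\pi_{P_{1}}(x)\dfrac{\log x}{x}=1-\dfrac{1}{\sqrt{2}}\cos\!\left(\log x-\tfrac{\pi}{4}\right)+O(1/\log x)$, which oscillates between $1-\tfrac{1}{\sqrt{2}}$ and $1+\tfrac{1}{\sqrt{2}}$; hence $\pi_{P_{1}}(x)\not\sim x/\log x$. (Equivalently, this is inherited from the failure of the PNT for the continuous system $\Pi_{C,1}$ treated in Section \ref{the continuous example}, which is exactly Beurling's sharpness example.)
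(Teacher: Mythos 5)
Your proposal follows the classical Beurling--Diamond route (Perron formula, contour shift, Hankel loops around the branch points $1\pm i$, then de‑smoothing), whereas the paper deliberately takes a different path: it proves a Tauberian remainder theorem (Theorem \ref{thremaindertauberian}) that operates \emph{only} on the boundary line $\Re e\:s=1$, using the distributional Fourier transform $\hat R(t)=(1+it)^{-1}(G(1+it)-a)$ together with the monotonicity of $N$ (Wiener--Ikehara plus a finite‑difference step with a scaled test function). The paper's Remark (ii) explicitly contrasts the two: Beurling and Diamond used contour integration for the \emph{continuous} function $N_{C,1}$ (for which $\zeta_{C,1}$ is explicit), while Diamond then passed to $N_{P_1}$ by convolution arguments; the paper's novelty is that its Tauberian theorem handles $N_{P_1}$ directly ``avoiding any use of information about the zeta functions on the region $\Re e\:s<1$''.

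This is more than a stylistic difference, and it points to the genuine gap in your argument. Your contour deformation to $\Re e\:s=1-c$ requires $\zeta_{P_1}$ (hence $e^{\Re e\:h(s)}$, not merely $h$) to be polynomially bounded in the whole strip $1-c\leq\Re e\:s\leq 1$. However, the estimate of Lemma \ref{lembegrenzingzeta} and the companion bound (\ref{estimatezetaderivatives}) are proved \emph{only on the line} $\Re e\:s=1$; their proof splits the sum at $p_{r}\leq|t|^{13}$ and uses the Mertens estimate $\sum_{p_{r}\leq|t|^{13}}p_{r}^{-1}=O(\log\log|t|)$. For $\sigma=1-c$ with $c>0$ fixed, the analogous sum $\sum_{p_{r}\leq|t|^{13}}p_{r}^{-\sigma}$ grows like a power $|t|^{13c}$ of $|t|$, so $|\exp\int x^{-s}\,\mathrm{d}(\Pi_{P_1}-\Pi_{C,1})(x)|$ could be of order $\exp(|t|^{13c})$, destroying the contour shift. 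You would have to let the contour hug $\Re e\:s=1$ (say with $c\asymp1/\log|t|$), re‑derive the gap and Mertens estimates uniformly in that shrinking strip, and re‑balance the Hankel asymptotics accordingly; none of this is provided, and it is precisely the technical burden that the paper's Theorem \ref{thremaindertauberian} is engineered to sidestep. The rest of your outline --- the identification of the square‑root singularities at $1\pm i$ with nonzero leading coefficients proportional to $\exp\bigl(\int_{1}^{\infty}x^{-1\mp i}\,\mathrm{d}(\Pi_{P_1}-\Pi_{C,1})(x)\bigr)$, the conjugate‑pair recombination into $\cos(\log x+\theta_{j})$, the residue $c=e^{h(1)}>0$, the de‑smoothing via monotonicity, and the elementary failure of the PNT from $\pi_{P_1}=\Pi_{C,1}+O(1)$ --- all matches the paper's singularity analysis (equations (\ref{diamondexeq4})--(\ref{diamondexeq7})) and its computation (\ref{asympdiamondexeq1}). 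So the difference is localized: you need, and do not have, growth control on $\zeta_{P_1}$ off the critical line, whereas the paper replaces the contour shift by a boundary Tauberian argument that needs only the on‑line bounds it actually proves.
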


We mention that Theorem \ref{thDiamond} not only shows the sharpness of $\gamma>3/2$ in Beurling's condition (\ref{ibpneq4}) for the PNT, but also that of $\gamma>3/2$ in (\ref{ibpneq5}). In addition, (\ref{diamondexasympexp}) implies that all Riesz means of the relative error $(N_{P_{1}}(x)-cx)/x$ satisfy
$$
\int_{1}^{x} \frac{N_{P_{1}}(t)-ct}{t}\left(1-\frac{t}{x}\right)^{m}\mathrm{d}t=\Omega_{\pm}\left(\frac{x}{\log^{3/2}x}\right), \quad x\to\infty\: , \quad m=0,1,2,\dots\: . 
$$
Observe also that Theorem \ref{thmain} in particular shows that the PNT by Schlage-Puchta and Vindas is a proper generalization Beurling's result. They gave an example in \cite[Sect. 6]{s-v} to support this result, but their proof contains a few mistakes (there are gaps in the proof of \cite[Lemma 6]{s-v} and the proof of \cite[Eq. (6.4)]{s-v} turns out to be incorrect). The last section of this article will be devoted to correcting these mistakes, we prove there:
\begin{theorem}
\label{propExample}
There exists a set of generalized primes $P^{\ast}$ such that $N_{P^{\ast}}(x)= x+\Omega(x/\log^{4/3}x)$, but $N_{P^{\ast}}(x)=x+O(x/\log^{5/3-\varepsilon} x)$ in Ces\`{a}ro sense for arbitrary $\varepsilon > 0$. Furthermore, for this number system we have $\pi_{P^{\ast}}(x)=x/\log x+O(x/\log^{4/3-\varepsilon} x)$. 
\end{theorem}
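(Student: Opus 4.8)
The plan is to follow the same two-stage strategy used for Theorem \ref{thmain}: first produce a \emph{continuous} prime-counting function with the desired analytic behaviour, then discretize. For the continuous model I would take a Riemann prime counting function of the same oscillatory type as (\ref{Riemannconteq}) but tuned to the exponent $4/3$, namely $\Pi^{\ast}(x)=\int_{1}^{x}\frac{1-\cos(\log^{4/3}u)}{\log u}\,\mathrm{d}u$ (or a close variant thereof), so that by the analysis of Section \ref{the continuous example} — in particular Theorem \ref{thlacseries} applied with $\alpha=4/3$ — the associated zeta function $\zeta^{\ast}$ extends continuously to $\Re e\,s=1$ with no zeros there, while its behaviour near $s=1$ forces $N^{\ast}(x)=x+\Omega(x/\log^{4/3}x)$; since $4/3<3/2$, the derivative estimates on $\Re e\,s=1$ are still good enough to push the Cesàro error down to $O(x/\log^{5/3-\varepsilon}x)$, exactly as the interval $1<\alpha<3/2$ in Theorem \ref{thmain} was engineered to do. The PNT with remainder $\pi^{\ast}(x)=x/\log x+O(x/\log^{4/3-\varepsilon}x)$ then follows from the same Tauberian/contour argument via the bounds on $\zeta^{\ast}$ and $(\zeta^{\ast})'$ on the line $\Re e\,s=1$.

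The second stage is Diamond's discretization: set $P^{\ast}=\{p_{k}\}$ with $p_{k}=(\Pi^{\ast})^{-1}(k)$ as in (\ref{eqdiscrete}), and show that replacing the continuous $\Pi^{\ast}$ by this discrete prime set perturbs the zeta function by a factor that is analytic and bounded (together with enough derivatives) up to $\Re e\,s=1$, so that all three conclusions — the $\Omega$-bound, the Cesàro $O$-bound, and the PNT remainder — transfer from $N^{\ast}$, $\pi^{\ast}$ to $N_{P^{\ast}}$, $\pi_{P^{\ast}}$. The machinery for this is already in place in Section \ref{the discrete example}: the discretization error in $\log\zeta$ is controlled by estimating $\sum_{k}(\log p_{k}^{-s}\text{-type terms})$ against the integral, using that $\Pi^{\ast}(x)\asymp x/\log x$ so consecutive $p_{k}$ are spaced like $\log p_{k}$, which is exactly the regime handled there. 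I would simply invoke those lemmas with the exponent $4/3$ in place of $\alpha$ and check that the hypotheses (growth of $\Pi^{\ast}$, bounds on its derivatives) are met.

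The main obstacle — and the reason this theorem is stated separately rather than subsumed into Theorem \ref{thmain} — is the sharpness of the Cesàro exponent: one must show $N_{P^{\ast}}(x)=x+O(x/\log^{5/3-\varepsilon}x)$ in Cesàro sense for \emph{every} $\varepsilon>0$ while $4/3$ is genuinely the true order of the unaveraged oscillation. This forces a careful two-sided analysis of $\zeta^{\ast}$ near $s=1$: the singularity must be strong enough that the oscillatory term in $N^{\ast}(x)-x$ really has size $\asymp x/\log^{4/3}x$ infinitely often (giving the $\Omega$, not merely $O$, statement), yet after $m$-fold Riesz averaging the gain is essentially $\log^{1/3-\varepsilon}x$ and no more. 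Concretely I expect the delicate point to be computing the precise asymptotic expansion of $\zeta^{\ast}(s)$ and $(\zeta^{\ast})'(s)$ as $s\to 1$ along and near the line $\Re e\,s=1$ — the analogue of identifying the constant $d_{0}\neq 0$ in Theorem \ref{thDiamond} — since only an expansion with a provably nonzero leading oscillatory coefficient yields the $\Omega_{\pm}$ lower bound. The rest of the argument is bookkeeping on top of the tools already developed in Sections \ref{aux}–\ref{the discrete example}; I would also, as promised in the text, point out explicitly where this corrects the gaps in \cite[Lemm. 6]{s-v} and \cite[Eq. (6.4)]{s-v}, namely by replacing their flawed direct estimates with the zeta-function bounds used here.
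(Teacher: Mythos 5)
Your proposal does not match the paper's argument, and more importantly it has a gap that I do not see how to close. The paper constructs $P^{\ast}$ by an entirely different route: it takes the ordinary rational primes and modifies them by \emph{removing and doubling suitable blocks} around a rapidly growing sequence $x_i$, choosing interval lengths on the order of $x_i/\log^{1/3}x_i$. The $\Omega$-bound $N(x)=x+\Omega(x/\log^{4/3}x)$ then comes directly from counting the roughly $x_i/\log^{4/3}x_i$ primes altered in a single block, while the Ces\`{a}ro bound $O(x/\log^{5/3-\varepsilon}x)$ comes from a careful estimate of the approximating systems $N_k$ using the theory of integers without large prime factors (the function $\Psi(x,y)$ and bound \eqref{eqestimatepsi}) — none of which involves the zeta-function machinery of Sections \ref{aux}--\ref{the discrete example}. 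The pair of exponents $(4/3,\,5/3)$ is a fingerprint of the choice $\log^{1/3}x_i$ in the block lengths; it is not something the oscillatory family $\Pi_{C,\alpha}$ produces.

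The concrete gap in your plan is the $\Omega$-lower bound. You propose to take $\alpha=4/3$ in \eqref{Riemannconteq}, invoke Theorem \ref{thlacseries}, and obtain $N^{\ast}(x)=x+\Omega(x/\log^{4/3}x)$. But the only thing Section \ref{the continuous example} establishes about the \emph{unaveraged} error $N_{C,\alpha}(x)-a_\alpha x$ for $\alpha\in(1,3/2]$ is the divergence of the Kahane $L^2$-integral \eqref{nkahane}. That divergence rules out any pointwise bound $O(x/\log^{3/2}x\,(\log\log x)^{1/2+\delta})$, hence gives $\Omega(x/\log^{3/2}x\,(\log\log x)^{1/2+\delta})$ for every $\delta>0$ — a much weaker conclusion than the required $\Omega(x/\log^{4/3}x)$, since $4/3<3/2$. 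To get the stronger $\Omega$-bound by the $\zeta_C$-route you would have to carry out a genuine pointwise asymptotic analysis of $N_{C,4/3}(x)-a_{4/3}x$, analogous to Theorem \ref{thDiamond} but for a branch-free, highly oscillatory boundary behaviour of $\zeta_C$; the paper never does this, and there is in fact no a priori reason to expect the answer to come out at exactly the exponent $4/3$. A secondary confusion: for $\alpha>1$ the Ces\`{a}ro error of $N_{C,\alpha}$ is $O(x/\log^n x)$ for \emph{every} $n$ (Proposition \ref{propcontsystem}), so "pushing the Ces\`{a}ro error down to $O(x/\log^{5/3-\varepsilon}x)$" is not the obstacle you describe, nor does it reflect any sharpness; the specific exponent $5/3$ is again tied to the block construction.

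In short, the theorem is stated separately not because the Ces\`{a}ro exponent needed fine-tuning within the $\Pi_{C,\alpha}$ family, but because it belongs to the combinatorial construction from \cite[Sect.~6]{s-v}, whose proof the paper is correcting. The corrections live in Lemma \ref{lemmaexample} (the $\Psi(x,y)$-estimates and the widened range $\exp(\log^{\eta}x_k)\leq x<\exp(x_k^{3/5})$) and in the final passage to $\overline N$ via the approximating systems $N_k$, not in any zeta-function bound of the kind you invoke.
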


\subsection{Notation}
\label{notation} We will often make use of standard Schwartz distribution calculus in our manipulations. See the textbooks \cite{hormander1990, vladimirov} for the theory of distributions and \cite{estrada-kanwal,p-s-v} for asymptotic analysis of generalized functions. The standard test function spaces are denoted as usual by $\mathcal{D}(\mathbb{R})$ and $\mathcal{S}(\mathbb{R})$, while $\mathcal{D}'(\mathbb{R})$ and $\mathcal{S}'(\mathbb{R})$ stand for their topological duals, the spaces of distributions and tempered distributions. We fix the constants in the Fourier transform as
$\hat{\phi}(t)=\int_{-\infty}^{\infty}e^{-itx}\phi(x)\:\mathrm{d}x.$ Naturally, the Fourier transform is well defined on $\mathcal{S}'(\mathbb{R})$ via duality. If $f\in\mathcal{S}'(\mathbb{R})$ has support in $[0,\infty)$, its Laplace transform is
$\mathcal{L}\left\{f;s\right\}=\left\langle f(u),e^{-su}\right\rangle,
$ $\Re e\:s>0$, and its Fourier transform $\hat{f}$ is the distributional boundary value of $\mathcal{L}\left\{f;s\right\}$ on $\Re e\:s=0$. We use the notation $H$ for the Heaviside function, it is simply the characteristic function of $(0,\infty)$.

\section{Auxiliary lemmas}
\label{aux}

We begin by defining some other helpful number-theoretic functions. As usual, the zeta function is indispensable for studying the prime number theorem in this context,
\begin{equation}
\label{defzeta}
\zeta(s) =\int^{\infty}_{1^{-}} x^{-s}\mathrm{d}N(x)\: .
\end{equation}
Besides the usual prime counting function $\pi$, we will also work with the Riemann prime counting function,
\begin{equation}
\label{defriemann}
\Pi(x) = \sum^{\infty}_{n=1} \frac{\pi(x^{1/n})}{n}\: ,
\end{equation}
and we have the following link between $\Pi$ and $\zeta$,
\begin{equation}
\label{eqlinkzetariemann}
 \zeta(s) = \exp\left(\int^{\infty}_{1^{-}}x^{-s}\mathrm{d}\Pi(x)\right)\:.
\end{equation}

 We will consider an even broader definition of generalized primes \cite{beurling}, which also takes into account `continuous' number systems. So, in this sense a generalized prime number system is merely a non-decreasing function $\Pi$ that vanishes for $x\leq1$, where we assume that the integral involved in (\ref{eqlinkzetariemann}) is absolutely convergent in the half-plane $\Re e\: s>1$. We normalize $\Pi$ in such a way that it is right continuous. Clearly, the zeta function $\zeta$ from (\ref{eqlinkzetariemann}) can always be represented as (\ref{defzeta}) with a unique non-decreasing function $N$ if we impose that $N$ is right continuous; in fact, $N$ is determined by $\mathrm{d}N=\exp^{\ast}(\mathrm{d}\Pi)$, where the exponential is taken with respect to the multiplicative convolution of measures \cite{diamond1}. The function $\pi$ does not need to make sense in this framework. Note also that if $N$ satisfies any of the three conditions (\ref{ibpneq4})--(\ref{ibpneq5}), then $N(x)\sim
  ax$; consequently, if two of such conditions are simultaneously satisfied, the constant $a$ should be the same. We remark as well that all the three PNT discussed in the introduction are valid in this more general setting. 

In the rest of this section we connect (\ref{kahaneeq}) and (\ref{ibpneq5}) with the boundary behavior of $\zeta(s)$ on the line $\Re e\:s=1$. 
\subsection{A sufficient condition for the Ces\`{a}ro behavior}

The following Tauberian lemma gives sufficient conditions on the zeta function for $N$ to have the Ces\`{a}ro behavior (\ref{ibpneq5}) with $\gamma=n\in\mathbb{N}$. The proof of this result makes use of the notion of the quasiasymptotic behavior of Schwartz distributions; for it, we use the notation exactly as in \cite[Sect. 2.12, p. 160]{p-s-v} (see also \cite[p. 304]{s-v}). 

\begin{lemma} \label{lemsufcondcesaro} Let $n\in\mathbb{N}$. Suppose that the function $F(s) = \zeta(s) - a/(s-1)$ can be extended to the closed half-plane $\Re e\:s\geq 1$ as an $n$ times continuously differentiable function. If 
for every $ 0 \leq j \leq n$ the functions $F^{(j)}(1+it)$ have at most polynomial growth with respect to the variable $t$, then $N$ satisfies the Ces\`{a}ro estimate
\begin{equation}
\label{eqcesaron}
 N(x) = ax + O\left(\frac{x}{\log^{n}x}\right)\quad \mathrm{(C)}\:, \quad x \rightarrow \infty\: .
\end{equation}
\end{lemma}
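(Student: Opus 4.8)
The plan is to translate the Cesàro estimate \eqref{eqcesaron} into a statement about the quasiasymptotic behavior of a suitable distribution associated with $N$, and then to read off that quasiasymptotic behavior from the regularity and growth hypotheses on $F$ via Fourier/Laplace inversion. First I would recall that, writing $e^u\,\mathrm{d}N(e^u)$ (or an equivalent normalization with the factor $e^{-u}$ removed), one obtains a tempered distribution supported in $[0,\infty)$ whose Laplace transform is $\zeta(s+1)$ up to the standard change of variables $x=e^u$. Subtracting the pole, the distribution $T(u)$ whose Laplace transform is $F(s+1)=\zeta(s+1)-a/s$ is then, by the hypothesis, the boundary value on $\Re e\,s=0$ of a function $F(1+it)$ that is $n$-times continuously differentiable with all derivatives of polynomial growth. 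The key point is that the Cesàro estimate \eqref{ibpneq6} with exponent $m$ is, by definition \cite{estrada-kanwal,p-s-v}, precisely the assertion that the relevant antiderivative of $(N(x)-ax)/x$ is $O(x/\log^{n}x)$ in the ordinary sense, which after the substitution $x=e^u$ becomes a quasiasymptotic (in the Cesàro/averaged sense) bound $O(e^{u}/u^{n})$ for the corresponding primitive of $T$.

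The second step is the Tauberian inversion itself. I would invoke the structural characterization of quasiasymptotic boundary behavior from \cite[Sect. 2.12]{p-s-v}: if the analytic representation (here $F(s+1)$, analytic for $\Re e\,s>0$) extends to $\Re e\,s\ge 0$ as a $C^{n}$ function with derivatives of polynomial growth in $t$, then the boundary distribution $T$ has the quasiasymptotic expansion consistent with decay $o(1)$ (C), and integrating $n$ times against the kernel $(1-t/x)^m$ improves this by a factor $\log^{-n}$. Concretely, one multiplies $F(1+it)$ by a smooth cutoff, uses that a $C^n$ function with polynomially bounded derivatives has Fourier transform that is $O(|u|^{-n})$ in the Cesàro sense near $u=+\infty$, and transfers this back through the exponential change of variables to obtain $\int_1^x \frac{N(t)-at}{t}(1-t/x)^m\,\mathrm{d}t = O(x/\log^n x)$ for $m$ large enough. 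This is exactly \eqref{ibpneq6} with $\gamma=n$, hence \eqref{eqcesaron}.

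The main obstacle, and the step deserving the most care, is the bookkeeping around the change of variables $x = e^u$ together with the pole subtraction: one must check that $\zeta(s)$ genuinely has only the simple pole at $s=1$ contributing the main term $ax$, that the remaining part $F$ has no worse singularities on $\Re e\,s=1$ than the stated $C^n$-with-polynomial-growth behavior allows, and that the boundary value is taken in the distributional sense compatible with the definition of quasiasymptotics used in \cite{p-s-v}. A secondary technical point is verifying that the polynomial growth of $F^{(j)}(1+it)$, $0\le j\le n$, is exactly the hypothesis needed to place the boundary distribution in the correct class so that the Cesàro (Riesz mean) bound of order $\log^{-n}$ follows; this is where the precise order $n$ of differentiability is consumed, one power of $\log^{-1}$ per derivative. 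Everything else is a routine application of the distributional Tauberian machinery once these identifications are made.
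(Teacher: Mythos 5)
Your proposal follows essentially the same route as the paper: subtract the pole, pass via $x=e^{u}$ to a tempered distribution supported in $[0,\infty)$ whose Laplace transform is $F(s)-a$ (up to normalization), read off an $O(h^{-n})$ bound on translates by $n$-fold integration by parts in the Fourier inversion formula using the $C^{n}$-with-polynomial-growth hypothesis, and then convert the resulting quasiasymptotic bound into a Ces\`{a}ro estimate via the structural theorem for quasiasymptotic boundedness from \cite[Thm.~2.42, p.~163]{p-s-v}. Two points need tightening before this matches the paper's argument. First, you take for granted that the relevant distribution (your $T$; the paper's $R$ with $N(x)=axH(x-1)+xR(\log x)$) is tempered, but this is not automatic: a priori $N(x)$ could grow like $x^{1+\varepsilon}$, making $e^{-u}N(e^{u})$ unbounded. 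The paper secures temperedness by first applying the Wiener--Ikehara theorem --- the continuous extension of $F$ across $\Re e\:s=1$ already yields $N(x)\sim ax$, hence $R\in L^{\infty}\subset\mathcal{S}'(\mathbb{R})$ --- and you need this step before the boundary value on $\Re e\:s=1$ makes sense in $\mathcal{S}'$. Second, a small misattribution of mechanism: the factor $\log^{-n}$ does not arise from ``integrating $n$ times against the kernel $(1-t/x)^{m}$''; it comes from the $n$ integrations by parts in the $t$-variable of the Fourier integral $\int F(1+it)(1+it)^{-1}\hat{\phi}(-t)e^{iht}\,\mathrm{d}t$, each yielding one power of $h^{-1}$. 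The Riesz kernel $(1-t/x)^{m}$ enters only afterwards, when the structural theorem converts the quasiasymptotic bound $R(\log\lambda\,\cdot\,)=O(1/\log^{n}\lambda)$ in $\mathcal{D}'(0,\infty)$ into an ordinary bound on a single Riesz mean of some (possibly large) order $m$. With the Wiener--Ikehara step added and the two roles kept distinct, your argument is the paper's.
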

\begin{proof} We define the function $R$ with support in $[0,\infty)$ in such a way that the relation $N(x) = axH(x-1) + xR(\log x)$ holds ($H$ is the Heaviside function). By the Wiener-Ikehara theorem (cf. \cite{korevaar,zhang2014WI}), the assumptions imply $N(x) \sim ax$. This ensures that $R\in\mathcal{S}'(\mathbb{R})$. A quick computation shows that $F(s) = a + s\mathcal{L}\{R;s-1\}$ for \mbox{$\Re $e $ s > 1$}. Let $\phi$ be an arbitrary test function from $\mathcal{S}(\R)$. We obtain
\begin{align*}
 \left\langle R(u+h), \phi(u)\right\rangle & = \frac{1}{2 \pi} \left\langle \hat{R}(t), \hat{\phi}(-t)e^{iht}\right\rangle\\
 & = \frac{1}{2 \pi} \lim_{\sigma \rightarrow 1^{+}} \int^{\infty}_{-\infty} \frac{F(\sigma + it) - a}{\sigma + it} \hat{\phi}(-t) e^{iht}\mathrm{d}t
\\
&
= \frac{1}{2 \pi}\int^{\infty}_{-\infty} \frac{F(1 + it) - a}{1 + it} \hat{\phi}(-t) e^{iht}\mathrm{d}t
\: .
\end{align*} 
 By using integration by parts $n$ times, we can bound this last term as
 \begin{equation*}
\frac{(-1)^{n}}{2\pi} \int^{\infty}_{-\infty} \left(\frac{F(1 + it)-a}{1+ it} \hat{\phi}(-t)\right)^{(n)}\frac{e^{iht}}{i^{n}h^{n}}\:\mathrm{d}t = O(h^{-n})\:,  \quad h\to\infty\:.
 \end{equation*}
 The last step is justified because all the derivatives of $F(1+it)$ have at most polynomial growth and any test function in $\mathcal{S}(\R)$ decreases faster than any inverse power of $|t|$. We thus find that $\int_{-\infty}^{\infty} R(u+h)\phi(u)\mathrm{d} u = O(h^{-n})$. Assuming that $\phi\in\mathcal{D}(\mathbb{R})$ and writing $h = \log \lambda$ and $\varphi(x) = e^{x}\phi(e^{x})$, we obtain the quasiasymptotic behavior
 \begin{equation} \label{eqquasi1} R(\log (\lambda x)) = O\left(\frac{1}{\log^{n}\lambda}\right), \quad \lambda \rightarrow \infty\:, \text{ in } \D(0,\infty)\: ,
\end{equation}
which explicitly means that 
\[ \int^{\infty}_{1} R(\log (\lambda x)) \varphi(x)\mathrm{d}x = O\left(\frac{1}{\log^{n}\lambda}\right), \quad \lambda \rightarrow \infty,
\]
for every test function $\varphi \in \mathcal{D}(0,\infty)$. Using \cite[Thm. 4.1]{vindas}, we obtain that the quasiasymptotic behavior (\ref{eqquasi1}) in the space $\D(0,\infty)$ is equivalent to the same quasiasymptotic behavior in the space $\D(\R)$, and, because of the structural theorem for quasiasymptotic boundedness \cite[Thm. 2.42, p. 163]{p-s-v} (see also \cite{vindas,vindas3}), we obtain the Ces\`{a}ro behavior (\ref{eqcesaron}).
\end{proof}

\subsection{Kahane's condition in terms of $\zeta$}

Note first that Kahane's condition (\ref{kahaneeq}) can be written as
\[ N(x) = ax + \frac{x}{\log x} E(\log x)\:, \quad  x \geq 1\: .
\]
where $E \in L^{2}(\R)$. We set $E(u)=0$ for $u<0$. Notice that $E(u)/u$ is continuous from the right at every point, as follows directly from its definition, and in particular it is integrable near $u=0$.

In the rest of this discussion we consider a generalized number system which satisfies Kahane's condition (\ref{kahaneeq}) with $a>0$. Since we have $N(x)\sim a x$, the abscissa of convergence of $\zeta$ is equal to $1$. Furthermore, $\zeta(1+it)$ always makes sense as a tempered distribution (the Fourier transform of the tempered measure $e^{-u}\mathrm{d}N(e^{u})$). With these ingredients we can compute the zeta function. We obtain 
\begin{equation} 
\label{eqformulazeta}
\zeta(s) =  \frac{a}{s-1} + a + sG(s) \quad \Re e \text{ } s > 1\: ,
\end{equation}
with 
\[ G(s) = \int^{\infty}_{0} e^{-(s-1)u} \frac{E(u)}{u} \mathrm{d}u\: .
\]
The function $G$ admits a continuous and bounded extension to \mbox{$\Re e$ $s = 1$}:
\[ G(1+it) = \int^{\infty}_{0} e^{-itu} \frac{E(u)}{u} \mathrm{d}u\: .
\]
Indeed since $E(u)u^{-1} \in L^{1}(\R)\cap L^{2}(\mathbb{R})$, its Fourier transform $G(1+it) \in C(\mathbb{R})\cap L^{\infty}(\R)\cap L^{2}(\mathbb{R})$. Furthermore,
\[ G'(1+it) = - \hat{E}(t)\in L^{2}(\R)\ .
\]

These observations lead to the following lemma. Recall that $H$ is the Heaviside function, so that $H(|t|-1)$ below is the characteristic function of $(-\infty,-1)\cup (1,\infty)$.
\begin{lemma} \label{lemnvwkahane} Kahane's condition $(\ref{kahaneeq})$ holds if and only if the boundary value distribution of $ \left(\zeta(s)-a/(s-1)\right)'$ on $\Re e\:s=1 $ satisfies 
\begin{equation} \label{eqnvwkahane1}
\left.\frac{d}{ds}\left(\zeta(s)-\frac{a}{s-1}\right)\right|_{s=1+it}\in L^{2}_{loc}(\R)
\end{equation}
and
\begin{equation}\label{eqnvwkahane2}
 \left(\frac{\zeta(1+it)}{t}\right)' H(|t|-1) \in L^{2}(\R)\: .
\end{equation}
\end{lemma}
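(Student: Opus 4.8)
The plan is to reduce the statement about Kahane's condition to the two displayed conditions by tracking the decomposition $\zeta(s) = a/(s-1) + a + sG(s)$ already established before the lemma, where $G(1+it)$ is the Fourier transform of $E(u)/u \in L^1(\R) \cap L^2(\R)$ and $G'(1+it) = -\hat E(t) \in L^2(\R)$. Writing $F(s) = \zeta(s) - a/(s-1) = a + sG(s)$, we have $F'(1+it) = G(1+it) + (1+it)G'(1+it)$ as distributional boundary values on $\Re e\: s = 1$. First I would argue the ``only if'' direction: if (\ref{kahaneeq}) holds then $\hat E \in L^2(\R)$, so $G' (1+it)\in L^2(\R)$, and since $G(1+it)$ is continuous and bounded, the product $(1+it)G'(1+it)$ lies in $L^2_{loc}(\R_t)$; hence $F'(1+it) \in L^2_{loc}$, which is (\ref{eqnvwkahane1}). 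For (\ref{eqnvwkahane2}), I would note that $\zeta(1+it)/t = a/((1+it)t) + a/t + G(1+it)(1+it)/t$, and away from $t=0$ (that is, after multiplying by $H(|t|-1)$) the first two explicit terms are smooth with all derivatives in $L^2$ of $\{|t|\geq 1\}$; the remaining term, after differentiation, is a linear combination of $G(1+it)/t^k$ (with $k\geq 1$, bounded by an $L^2$ function on $|t|\geq 1$ since $G(1+it)\in L^\infty$) and $G'(1+it)(1+it)/t$, which is in $L^2(|t|\geq 1)$ because $G'(1+it)\in L^2$ and $(1+it)/t$ is bounded there. This gives (\ref{eqnvwkahane2}).

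For the converse, I would start from (\ref{eqnvwkahane1}) and (\ref{eqnvwkahane2}) and run the above computations backwards to recover $G'(1+it) \in L^2(\R)$, which by Plancherel forces $\hat E \in L^2$, hence $E \in L^2$, i.e. (\ref{kahaneeq}). The local condition (\ref{eqnvwkahane1}): since $F'(1+it) = G(1+it) + (1+it)G'(1+it)$ and $G(1+it)$ is continuous hence locally $L^2$, we get $(1+it)G'(1+it) \in L^2_{loc}$, and because $|1+it| \geq 1$ this yields $G'(1+it) \in L^2_{loc}(\R_t)$, controlling a neighborhood of the origin (and any compact set). The global condition (\ref{eqnvwkahane2}): expanding $(\zeta(1+it)/t)' H(|t|-1)$ as above, the explicit pieces coming from $a/((1+it)t)+a/t$ contribute something in $L^2(|t|\geq1)$, and of the remaining terms all but $G'(1+it)(1+it)/t$ are products of the bounded function $G(1+it)$ with functions in $L^2(|t|\geq 1)$; subtracting these off shows $G'(1+it)(1+it)/t \in L^2(|t|\geq 1)$, and since $|(1+it)/t|$ is bounded below by a positive constant on $|t|\geq 1$, we conclude $G'(1+it) \in L^2(|t|\geq 1)$. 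Combining with the local statement gives $G'(1+it) = -\hat E(t) \in L^2(\R)$, as needed.

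The routine part is the bookkeeping of which terms are bounded-times-$L^2$ versus genuinely $L^2$; the one point that deserves care, and which I expect to be the main obstacle, is justifying that the formal manipulations with boundary value distributions are legitimate — in particular that $F'(1+it)$ really does decompose as $G(1+it) + (1+it)G'(1+it)$ as distributional boundary values, and that multiplying these boundary values by the smooth functions $1/t$, $1/t^2$ on the region $|t|\geq1$ (equivalently, by $H(|t|-1)\cdot(\text{smooth})$) is compatible with the $L^2$ statements. Here I would invoke that $G(s)$ and $G'(s)$ extend continuously to $\Re e\: s = 1$ (as recorded in the paragraph preceding the lemma, where $G(1+it)\in C(\R)\cap L^\infty(\R)\cap L^2(\R)$ and $G'(1+it) = -\hat E(t)$ is at least a well-defined element of $\mathcal{S}'(\R)$), so that all the products in question are products of honest locally integrable functions rather than of distributions, and no subtle multiplication of distributions is involved. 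With that observation the equivalence is a direct matter of the Plancherel theorem and the elementary estimates above.
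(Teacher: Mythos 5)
Your forward (``only if'') direction reproduces the computation the paper carries out in the paragraph preceding the lemma, and it is fine. The converse direction, however, has a genuine circularity. You invoke twice that $G(1+it)$ is a continuous, locally $L^{2}$ and indeed globally bounded function of $t$ (``$G(1+it)$ is continuous hence locally $L^{2}$,'' ``products of the bounded function $G(1+it)$''). But those regularity and boundedness facts about $G$ on the line $\Re e\:s=1$ were derived in the paper under the standing hypothesis that Kahane's condition holds: they follow from $E(u)/u\in L^{1}(\R)\cap L^{2}(\R)$, which needs $E\in L^{2}$. In the converse implication $E\in L^{2}$ is the conclusion, not the hypothesis; a priori $G(1+it)$ is only a tempered distribution arising as the boundary value of a Laplace transform, and nothing in the given data immediately forces it to be bounded, or even a locally integrable function.

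The gap can be closed, but it needs more than ``running the computation backwards.'' From (\ref{eqnvwkahane1}) one should first observe that the tempered distribution $h(t):=(1+it)G(1+it)=F(1+it)-a$ satisfies $h'(t)=iF'(1+it)\in L^{2}_{loc}$, so $h$ is a locally absolutely continuous function; this yields continuity of $G(1+it)=h(t)/(1+it)$ and rescues your local step. For the global step one does not get, and does not need, boundedness of $G$: from (\ref{eqnvwkahane2}) one has $(h(t)/t)'\in L^{2}(|t|\geq 1)$, and integrating from $t=\pm 1$ gives the polynomial bound $|h(t)|=O(|t|^{3/2})$, hence $G(1+it)/t^{2}=O(|t|^{-3/2})\in L^{2}(|t|\geq 1)$. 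Only with this growth estimate in hand can you legitimately subtract the $G(1+it)/t^{2}$ term and isolate $G'(1+it)(1+it)/t$ to conclude $G'(1+it)\in L^{2}$. As written, your subtraction step assumes exactly what must be proved. (To be fair, the paper itself presents the lemma as an immediate consequence of the preceding computation and does not spell out the converse either, but the bootstrap above is genuinely required.)
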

Naturally, the derivative in (\ref{eqnvwkahane2}) is taken in the distributional sense with respect to the variable $t$. 
\begin{proof}
We have already seen that Kahane's condition holds if and only if  $G'(1+it)\in L^{2}(\mathbb{R})$, and that (\ref{eqnvwkahane1}) and (\ref{eqnvwkahane2}) are necessary for it. Assume this two conditions. Note that (\ref{eqnvwkahane1}) is sufficient to conclude $G(1+it)\in C(\mathbb{R})$, while (\ref{eqnvwkahane2}) and (\ref{eqformulazeta}) imply the bound 
$$G(1+it)=O(\sqrt{|t|}) \quad \mbox{for } |t|>1\: ,$$ 
because 
\[|\zeta(1+it)| \ll |t|\int_{1\leq|u|\leq |t|} \left|\left(\frac{\zeta(1+iu)}{u}\right)'\right|\mathrm{d}u\ll |t|^{3/2}, 
\]
by H\"{older}'s inequality. So, we may take the continuity of $G(1+it)$ and the bound $G(1+it)=O(\sqrt{|t|})$ for granted in the rest of the proof. In view of (\ref{eqformulazeta}), the function involved in (\ref{eqnvwkahane1}) is precisely $G(1+it)+(1+it)G'(1+it)$; therefore, (\ref{eqnvwkahane1}) yields $G'(1+it)\in L_{loc}^{2}(\mathbb{R})$. It remains to show that $G'(1+it)$ is square integrable on $\mathbb{R}\setminus[-1,1]$. For $|t|>1$, appealing again to the defining equation (\ref{eqformulazeta}), we have
\begin{align*}
 \frac{i(1+it)}{t}G'(1+it)&=\left(\frac{\zeta(1+it)}{t}\right)'
+\frac{2a}{it^{3}}+ \frac{(a+G(1+it))}{t^{2}}
\\
&
=\left(\frac{\zeta(1+it)}{t}\right)'+O\left(\frac{1}{|t|^{3/2}}\right)\in L^{2}\left(\mathbb{R}\setminus[-1,1]\right)\: ,
\end{align*}
which now gives $G'(1+it)\in L^{2}(\mathbb{R})$.  
\end{proof}

Our strategy in the next two sections to show Theorem \ref{thmain} is to exhibit examples of generalized number systems which break down the conditions from Lemma \ref{lemnvwkahane} but satisfy those from Lemma \ref{lemsufcondcesaro}.
\section{Continuous examples}
\label{the continuous example}
We shall now study the family of absolutely continuous Riemann prime counting functions (\ref{Riemannconteq}). For ease of writing, we drop $\alpha$ from the notation and we simply write
\begin{equation}
\label{Riemannconteq2} \Pi_{C}(x)=\Pi_{C,\alpha}(x)=\int^{x}_{1} \frac{1-\cos(\log^{\alpha} u)}{\log u } \mathrm{d}u\:, \  \ \ x\geq1\: .
\end{equation}
The number-theoretic functions associated with this example will also have the subscripts $C$, that is, we denote them as $N_{C}$ and $\zeta_{C}$. As pointed out in the Introduction, when $\alpha = 1$ we recover the example of Beurling. For this reason, it is clear that $\alpha = 1$ will not yield an example for Theorem \ref{thmain}, as the prime number theorem is not even fulfilled and hence neither holds the Ces\`{a}ro behavior (\ref{Cesaroeqgi}) for $N_{C}$ with $n>3/2$. We assume therefore in this section that $\alpha > 1$. Now we calculate the function $\zeta_{C}$ of our continuous number system via formula (\ref{eqlinkzetariemann}):
\begin{align*}
 \log \zeta_{C}(s) &= \int^{\infty}_{1} \frac{d\Pi_{C}(x)}{x^{s}} = \int^{\infty}_{1} \frac{1- \cos(\log^{\alpha}x)}{x^{s}\log x }\:\mathrm{d}x\\
 & = \int^{\infty}_{0} \frac{1- \cos u^{\alpha}}{u} e^{-(s-1)u}\mathrm{d}u\\
 & = \mathrm{F.p.} \int^{\infty}_{0} \frac{e^{-(s-1)u}}{u} \mathrm{d}u - \mathrm{F.p.} \int^{\infty}_{0} \frac{\cos u^{\alpha}}{u} e^{-(s-1)u}\mathrm{d}u\\
 & = -\log(s-1) - \gamma - K(s)\: , \quad \Re \text{e } s > 1\: ,
\end{align*}
where $\gamma=0.57721\dots$ is (\emph{from now on} in this article) the Euler-Mascheroni constant,
\begin{equation}\label{defK}K(s):=\mathrm{F.p.} \int^{\infty}_{0} \frac{\cos u^{\alpha}}{u} e^{-(s-1)u}\mathrm{d}u\: , \quad \Re \text{e } s >1\:,
\end{equation}
and $\mathrm{F.p.}$ stands for the Hadamard finite part of a divergent integral \cite[Sect. 2.4]{estrada-kanwal}.
Summarizing, we have found that
\begin{equation}\label{zetacontformula} \zeta_{C}(s) = \frac{e^{-\gamma}e^{-K(s)}}{s-1}\:, \quad \Re e \text{ } s > 1\:.
\end{equation}

It is clear that we must investigate the properties of the function $K$ in order to make further progress in understanding the zeta function $\zeta_{C}$ of (\ref{Riemannconteq2}). The next theorem is of independent interest, it tells us a number of useful analytic properties of the singular integral (\ref{defK}).
\begin{theorem} \label{thlacseries} Let $\alpha > 1$. The function $K$, defined by $(\ref{defK})$ for $\Re e\:s>1$, has the ensuing properties:
\end{theorem}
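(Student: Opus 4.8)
\textbf{Proof plan for Theorem \ref{thlacseries}.}

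The plan is to analyze the finite-part integral $K(s)$ defined in (\ref{defK}) by first reducing the computation on the line $\Re e\: s = 1$ to a one-dimensional oscillatory integral and then applying stationary phase. Writing $s = 1 + \varepsilon + it$ and substituting $v = u^{\alpha}$ (so $u = v^{1/\alpha}$, $\mathrm{d}u = \alpha^{-1} v^{1/\alpha - 1}\mathrm{d}v$), the integrand $\frac{\cos u^{\alpha}}{u}e^{-(s-1)u}$ becomes, up to the finite-part regularization at the origin, a function of the form $\frac{1}{\alpha v}\cos v \, e^{-(\varepsilon+it)v^{1/\alpha}}\mathrm{d}v$. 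The key point is that the boundary value $K(1+it)$ as $\varepsilon \to 0^{+}$ exists and can be written, modulo a term that is smooth in $t$, in terms of integrals of the type $\int_{0}^{\infty} v^{-1} e^{iv} e^{-itv^{1/\alpha}}\mathrm{d}v$. The phase is $\psi(v) = v - t v^{1/\alpha}$, and since $1/\alpha < 1$ the stationary point is at $\psi'(v) = 1 - (t/\alpha) v^{1/\alpha - 1} = 0$, i.e.\ $v_{t} \asymp t^{\alpha/(\alpha-1)}$ for $t \to +\infty$. A standard stationary phase expansion then yields the leading asymptotics of $K(1+it)$, and differentiating under the integral sign (which introduces an extra factor of $v^{1/\alpha}$, hence shifts the power of $t$ in the amplitude) gives the asymptotics of $K'(1+it)$ and higher derivatives. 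The precise exponent $\alpha/(\alpha-1)$ that emerges from the stationary point, combined with $1 < \alpha < 3/2$, is exactly what makes $K'(1+it)$ fail to be in $L^{2}$ locally or decay too slowly, which will feed into Lemma \ref{lemnvwkahane}.

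The concrete steps I would carry out are the following. First, I would justify that $K(s)$ extends continuously (and in fact with the required number of derivatives) to $\Re e\: s \geq 1$ away from $s = 1$, by writing the finite-part integral as an honestly convergent integral after one integration by parts (the factor $u^{-1}$ is the only source of divergence at $0$, and $\cos u^{\alpha} - 1 = O(u^{2\alpha})$ handles it). Second, I would establish the analytic continuation / boundary value formula for $K(1+it)$ by contour rotation: pushing the contour of integration from the positive real axis toward a ray where $e^{-(s-1)u}$ and $e^{\pm i u^{\alpha}}$ combine favorably, which identifies $K(1+it)$ with a convergent oscillatory integral plus elementary terms. Third, I would run the stationary phase / van der Corput analysis on this oscillatory integral to extract an asymptotic expansion of $K(1+it)$ and each $K^{(j)}(1+it)$ as $|t| \to \infty$, tracking the power of $|t|$ carefully: I expect $K(1+it) \to 0$ but $K'(1+it) \sim c\,|t|^{\beta}$ for some explicit $\beta = \beta(\alpha) \geq 0$ (with $\beta > 0$ precisely in a subrange, or with a borderline $|t|^{0}$ non-decay), and similarly for higher derivatives with increasing powers. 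Fourth, I would translate these bounds on $K$ back to $\zeta_{C}$ via (\ref{zetacontformula}): since $\zeta_{C}(s) - e^{-\gamma}/(s-1) = (s-1)^{-1}(e^{-\gamma}(e^{-K(s)} - 1))$ and $K(1+it) \to 0$, the function $e^{-K(s)} - 1 \sim -K(s)$ inherits the same growth and smoothness, and the polynomial-growth hypotheses of Lemma \ref{lemsufcondcesaro} as well as the failure of (\ref{eqnvwkahane1}) or (\ref{eqnvwkahane2}) follow.

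The main obstacle I anticipate is the rigorous treatment of the boundary-value passage $\varepsilon \to 0^{+}$ uniformly enough to differentiate in $s$ the required number of times, together with making the stationary-phase expansion uniform and valid term-by-term for all derivatives $K^{(j)}$ simultaneously. The finite-part regularization interacts with the contour rotation in a way that needs care near $u = 0$, and near the stationary point $v_{t}$ one must check that the amplitude $v^{-1}$ (and its $j$-fold modification $v^{-1+j/\alpha}$) is slowly varying on the scale of the critical width $v_{t}^{1 - (1/\alpha - 1)} \asymp v_{t}^{2 - 1/\alpha}$ so that the leading stationary-phase term genuinely dominates. A secondary technical point is controlling the non-stationary part of the integral (integration by parts / van der Corput away from $v_{t}$) to show it contributes only lower-order terms; this is routine but must be done with enough uniformity in $t$. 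Once these estimates are in place, reading off which of the three PNT conditions hold for $\Pi_{C,\alpha}$ is a matter of bookkeeping with (\ref{zetacontformula}).
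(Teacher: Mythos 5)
Your high-level mechanism is the right one: after peeling off the finite-part singularity at $u=0$, $K(1+it)$ reduces to an oscillatory integral whose phase has a unique stationary point, and the exponent $\alpha/(\alpha-1)$ you extract from it is exactly the one that drives the whole analysis (the paper works with the phase $x^{\alpha}-\operatorname{sgn}(t)x$ after scaling $u \mapsto |t|^{1/(\alpha-1)}x$; your substitution $v=u^{\alpha}$ gives the same critical point up to a change of variable). The paper's treatment differs at the technical level: instead of a single contour rotation it uses a continuous Littlewood--Paley decomposition $1=\varphi(u)+\int_{0}^{1}\psi(yu)\,\mathrm{d}y/y$ splitting $F(z)=\theta(z)+v(z)$, estimates the pieces $J_{m}(y,t;\sigma)$ by integration by parts away from the stationary scale, and applies H\"{o}rmander's stationary-phase theorem only in the dyadic band $y\asymp|t|^{-1/(\alpha-1)}$. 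This does double duty: it proves entirety of $K$ (the integrals converge absolutely in the space of entire functions) and it makes the asymptotics uniform in $y$, which you would have to re-engineer in the contour-rotation picture (also note that the direction of the rotation must depend on $\operatorname{sgn}(t)$, so the two half-lines need separate treatment). The $\theta$ piece near the origin is handled by the Estrada--Kanwal version of Erd\'{e}lyi's formula, which produces the $-\log|t|-\gamma-\frac{\pi i}{2}\operatorname{sgn}(t)$ terms. These are reasonable alternative routes; the paper's decomposition buys a clean proof of (a) and uniformity, your rotation would likely be shorter once set up but needs the caveats above.

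There is, however, a genuine gap in your expectations that would derail the bookkeeping in step four. You assert ``$K(1+it)\to 0$'' and then argue $e^{-K(s)}-1\sim -K(s)$. This is false: by (\ref{asympK}), $K(1+it)=-\log|t|-\gamma-\frac{\pi i}{2}\operatorname{sgn}(t)+o(1)$, so $K$ diverges logarithmically and $e^{-K(1+it)}\sim e^{\gamma}|t|\,e^{\frac{\pi i}{2}\operatorname{sgn}(t)}$ grows linearly. This is not a cosmetic correction: it is precisely this linear growth that cancels the $1/(s-1)$ factor in (\ref{zetacontformula}), producing a $\zeta_{C}$ that is bounded on $\Re e\: s=1$ away from $s=1$, and it feeds directly into the $L^{2}$ calculation in Lemma \ref{lemnvwkahane} where the relevant object is $K'(1+it)/t$ (the failure is at $|t|\to\infty$, not ``locally'' as you suggest). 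Relatedly, you subtract $e^{-\gamma}/(s-1)$, but the residue of $\zeta_{C}$ at $s=1$ is $e^{-\gamma}e^{-K(1)}=e^{-\gamma(1-1/\alpha)}$ because $K(1)=-\gamma/\alpha$, which is part (b) of the theorem and has to be computed separately (the paper does it via the contour $[\varepsilon,r]\cup\{re^{i\vartheta}\}\cup\{xe^{i\pi/(2\alpha)}\}\cup\{\varepsilon e^{i\vartheta}\}$). Your plan does not engage with (b) and would therefore carry the wrong constant $a$ through the later steps.
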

\begin{itemize}
\item [$(a)$] $K$ can be extended to the whole complex plane as an entire function.
\item [$(b)$] $K(1)=-\gamma/\alpha$.
\item [$(c)$] On the line $\Re e\: s=1$ the function $K$ and their derivatives have asymptotic behavior
\begin{equation}
\label{asympK}
K(1+it)= - \log\left|t\right| -\gamma -\frac{\pi i}{2} \operatorname*{sgn}(t) + O\left(\frac{1}{|t|^{\alpha}}\right)+O\left(\frac{1}{|t|^{\frac{\alpha}{2(\alpha-1)}}}\right),
\end{equation}
\begin{equation}
\label{asympK'}
K'(1+it)=A_{\alpha,1}|t|^{\frac{1-\alpha/2}{\alpha-1}}\exp\left(-i\operatorname*{sgn}(t)\left(B_{\alpha}|t|^{\frac{\alpha}{\alpha-1}}-\frac{\pi}{4}\right)\right)
+ O\left(\frac{1}{|t|}\right),
\end{equation}
and, for $m=2,3,\dots,$
\begin{equation}
\label{asympKderivatives}
K^{(m)}(1+it)=A_{\alpha,m}|t|^{\frac{m-\alpha/2}{\alpha-1}}\exp\left(-i\operatorname*{sgn}(t)\left(B_{\alpha}|t|^{\frac{\alpha}{\alpha-1}}-\frac{\pi}{4}\right)\right)+ O\left(|t|^{\frac{m-3\alpha/2}{\alpha-1}}\right)\: ,
\end{equation}
as $|t|\to\infty$, where
\begin{equation}
\label{asympKconstants}
B_{\alpha}= (\alpha-1)\alpha^{-\frac{\alpha}{\alpha-1}} \ \ \mbox{ and } \ \ A_{\alpha,m}=(-1)^{m}\alpha^{\frac{1/2-m}{\alpha-1}}
\sqrt{\frac{\pi }{2(\alpha-1)}}\: , \ \ \ m=1,2,3,\dots \:. 
\end{equation}
\end{itemize}
\begin{proof} 
We shall obtain all claimed properties of $K$ from those of the analytic function
$$F(z):= \operatorname*{F.p.} \int^{\infty}_{0} \frac{e^{-izu}}{u} e^{iu^{\alpha}}\mathrm{d}u\: , \  \ \ \Im m\:z<0\: .$$
The two functions are obviously linked via the relation
\begin{equation}
\label{asympeq0}
K(1+iz)= \frac{F(z) + \overline{F(-\bar{z})}}{2}\: .
\end{equation} 
We shall need a (continuous) Littlewood-Paley partition of unity \cite[Sect. 8.5]{hormander1997}. So, 
find an even smooth function $\varphi\in\mathcal{D}(\mathbb{R})$ with the following properties: $\operatorname*{supp}{\varphi}\subset(-1,1)$ and $\varphi(x)=1$ for $x\in[-1/2,1/2]$. Set $\psi(x)=-x\varphi'(x)$, an even test function with support on $(-1,1/2]\cup[1/2,1)$, so that we have the decomposition of the unity
$$
1=\varphi(x)+\int_{0}^{1}\psi(yx)\frac{\mathrm{d}y}{y}, \ \ \ x\in\mathbb{R}\: .
$$
This leads to the continuous Littlewood-Paley decomposition 
\begin{equation}
F(z)= \theta(z)+v(z)\: , \ \ \ \Im m\: z<0\: ,
\end{equation}
where 
\begin{equation*}
\theta(z)=\mathrm{F.p.} \int_{0}^{\infty} \frac{\varphi(u)e^{i(u^{\alpha}-z u)}}{u}\mathrm{d}u\: , \ \ \  v(z)= \int_{0}^{1}\Phi(y,z)\frac{\mathrm{d}y}{y}\: ,
\end{equation*}
and 
\begin{equation*}
\Phi(y,z)= \int_{0}^{\infty} \frac{\psi(yu)e^{i(u^{\alpha}-z u)}}{u}\mathrm{d}u
\: , \ \ \ \Im m\: z<0\: .
\end{equation*}
The formula for $v$ still makes sense for $z=t\in\mathbb{R}$ if it is interpreted in the sense of tempered distributions, where the integral with respect to $y$ is then understood as a weak integral in the space $\mathcal{S}'(\mathbb{R})$. Observe that $\theta(z)$ and $\Phi(y,z)$ are entire functions of $z$, as follows at once from the well-known Paley-Wiener-Schwartz theorem \cite{vladimirov}. The asymptotic behavior of $\theta$ and its derivatives on the real axis can be computed directly from the Estrada-Kanwal  generalization of Erd\'{e}lyi's asymptotic formula \cite[p. 148]{estrada-kanwal}; indeed, employing only one term from the quoted asymptotic formula, we obtain
\begin{align}
\label{asympeq1}
&\theta(t)= -\log\left|t\right| -\gamma -\frac{\pi i}{2} \operatorname*{sgn}(t) + O\left(\frac{1}{|t|^{\alpha}}\right) \ \ \mbox{and} 
\\
 &
\theta^{(m)}(t)= \frac{(-1)^{m}(m-1)!}{t^{m}} + O\left(\frac{1}{|t|^{\alpha+m}}\right),
\nonumber
\end{align}
$m=1,2,\dots$, as $|t|\to\infty$.
We now study the integral $\int_{0}^{1}\Phi(y,z)y^{-1}\mathrm{d}y$. If we consider $z=t+i\sigma$, we can write ($t\neq0$) 

\begin{equation*}
\partial^{m}_{z}\Phi(y,z)= y^{1-m}(-i)^{m}\left|t\right|^{\frac{1}{\alpha-1}}\int_{0}^{\infty}\rho_{m}\left(|t|^{\frac{1}{\alpha-1}}yx\right)e^{i|t|^{\frac{\alpha}{\alpha-1}}(x^{\alpha}- \operatorname*{sgn}(t) x)+\sigma|t|^{\frac{1}{\alpha-1}}x}\mathrm{d}x\: ,
\end{equation*}
where $\rho_{m}(x)=x^{m-1}\psi(x)$, $m\in\mathbb{N}$. We need to establish some asymptotic estimates for the integrals occurring in the above expression, namely, for 
\begin{equation}
\label{asympeq3}
J_{m}(y,t;\sigma)=\int_{0}^{\infty}\rho_{m}\left(|t|^{\frac{1}{\alpha-1}}yx\right)e^{i|t|^{\frac{\alpha}{\alpha-1}}(x^{\alpha}- \operatorname*{sgn}(t) x)+\sigma|t|^{\frac{1}{\alpha-1}}x}\mathrm{d}x\: .
\end{equation}
We shall show that for each $n\in\mathbb{N}$

\begin{equation}
\label{asympeq4}
J_{m}(y,t;\sigma)=\begin{cases} O\left(y^{n}t^{-n}\right)&\mbox{ if } t>0 \mbox{ and } t^{\frac{1}{\alpha-1}}y> 2\alpha^{\frac{1}{\alpha-1}}\: ,\\
O\left(y^{n\alpha-1}t^{-\frac{1}{\alpha-1}}\right) & \mbox{ if } t>0 \mbox{ and } t^{\frac{1}{\alpha-1}}y<1/2\: ,
\end{cases}
\end{equation}
and 
\begin{equation}
\label{asympeq5}
J_{m}(y,t;\sigma)=\begin{cases} O\left(y^{n}|t|^{-n}\right)& \mbox{ if } t<0 \mbox{ and }|t|^{\frac{1}{\alpha-1}}y\geq 1\: ,\\
O\left(y^{n\alpha-1}|t|^{-\frac{1}{\alpha-1}}\right) & \mbox{ if } t<0 \mbox{ and }|t|^{\frac{1}{\alpha-1}}y<1\: ,
\end{cases}
\end{equation}
where all big $O$-constants only depend on $\alpha$, $n$, and the $L^{\infty}$-norms of the derivatives of $\rho_{m}$. Notice that the estimates (\ref{asympeq4}) and (\ref{asympeq5}) yield, uniformly for $z$ in compacts of $\mathbb{C}$,
$$
\left|\partial^{m}_{z}\Phi(y,z)\right|= O_{n}(y^{n\alpha-m}) \ \ \ \mbox{if }y\left|t\right|^{\frac{1}{\alpha-1}}<1/2\: ,
$$
for any $n$, which proves that the integrals
$\int_{0}^{1}\partial^{m}_{z}\Phi(y,z)y^{-1}\mathrm{d}y$
are absolutely convergent in the space of entire functions and thus $v(z)$ is entire. In particular, we obtain that $F(z)$ is an entire function, which also implies that $K(s)$ is entire because of (\ref{asympeq0}). Furthermore, using (\ref{asympeq5}), one obtains at once that
\begin{equation}
\label{asympeq6}v^{(m)}(t)=O\left(|t|^{-n}\right)\ \  \ \mbox{as } t\to-\infty, \ \ \ \forall n\in\mathbb{N}\: .
\end{equation}

In order to prove (\ref{asympeq4}) in the range $t^{\frac{1}{\alpha-1}}y> 2\alpha^{\frac{1}{\alpha-1}}$, we rewrite (\ref{asympeq3})
as
$$
J_{m}(y,t;\sigma)= \int_{0}^{\infty}\frac{\rho_{m}(t^{\frac{1}{\alpha-1}}yx)}{{g'(x)}} g'(x)e^{i t^{\frac{\alpha}{\alpha-1}}g(x)}\mathrm{d}x\: , 
$$
where $g(x)=x^{\alpha}-x-i\sigma x/t$. The estimate (\ref{asympeq4}) for $t^{\frac{1}{\alpha-1}}y>2\alpha^{\frac{1}{\alpha-1}}$ follows by integrating by parts $n$ times and noticing that $\left|g'(x)\right|>1-2^{1-\alpha}>0$ for $x\in(0,\alpha^{\frac{-1}{\alpha-1}}/2)$. 
In fact, integrating by parts once gives
\begin{align*}
&J_{m}(y,t;\sigma)\leq 
\\
&
\quad \frac{\|g\|_{L^{\infty}}+\|g'\|_{L^{\infty}}}{(1-2^{1-\alpha})^{2}} t^{-\alpha/(\alpha-1)}\int_{0}^{(2\alpha^{\frac{1}{\alpha-1}})^{-1}}(yt^{\frac{1}{\alpha-1}}|\rho'_{m}(yt^{\frac{1}{\alpha-1}}x)|+|\rho_{m}(yt^{\frac{1}{\alpha-1}}x)|)\mathrm{d}x
\ll
yt^{-1},
\end{align*}
because $\rho(yt^{\frac{1}{\alpha-1}}x)$ vanishes for $x\geq (2\alpha^{\frac{1}{\alpha-1}})^{-1}$ and $ t^{-\alpha/(\alpha-1)}\leq (2\alpha^{\frac{1}{\alpha-1}})^{-1} yt^{-1}$. In the general case, we iterate this procedure $n$ times to obtain $J_{m}(y,t;\sigma)=O(y^{n}t^{-n})$ where the $O$-constant only depends on $\alpha$ and $\|\rho_{m}\|_{L^{\infty}(\mathbb{R})},$ $\|\rho'_{m}\|_{L^{\infty}(\mathbb{R})},$ $\dots,$ $\|\rho^{(n)}_{m}\|_{L^{\infty}(\mathbb{R})}$. On the other hand, if $t^{\frac{1}{\alpha-1}}y<1/2$, we integrate by parts $n$ times the integral written as
$$
J_{m}(y,t;\sigma)= \frac{1}{t^{\frac{1}{\alpha-1}}y}\int_{1/2}^{1}\frac{\rho_{m}(x)}{f'_{y}(x)} f'_{y}(x)e^{i{y^{-\alpha}}f_{y}(x)}\mathrm{d}x\: ,
$$
where $f_{y}(x)=x^{\alpha}-y^{\alpha-1}tx-i\sigma y^{\alpha-1}x$. The second part of (\ref{asympeq4}) holds because $\left|f'_{y}(x)\right|\geq \Re e\: f'_{y}(x)> (\alpha-1)2^{1-\alpha}$ and the derivatives of $f_{y}$ of order $\geq2$ are bounded on $(1/2,1)$; once again, the $O$-constant merely depends on $\alpha$ and $\|\rho_{m}\|_{L^{\infty}(\mathbb{R})},$ $\|\rho'_{m}\|_{L^{\infty}(\mathbb{R})},$ $\dots,$ $\|\rho^{(n)}_{m}\|_{L^{\infty}(\mathbb{R})}$. The estimate (\ref{asympeq5}) is proved in a similar fashion. 

We now obtain the asymptotic behavior of $v(t)$ and its derivatives as $t\to\infty$. Employing (\ref{asympeq4}), we have for each $n\in\mathbb{N}$
\begin{align*}
v^{(m)}(t)&= (-i)^{m}t^{\frac{1}{\alpha-1}}\int_{t^{-\frac{1}{\alpha-1}}/2}^{2 (\alpha/t)^{\frac{1}{\alpha-1}}} y^{-m}J_{m}(y,t;0)\mathrm{d}y +O\left(t^{-n}\right)
\\
&=
(-i)^{m}t^{\frac{m}{\alpha-1}}\int_{1/2}^{2 \alpha^{\frac{1}{\alpha-1}}} y^{-m}\int_{0}^{\infty}\rho_{m}(yx)e^{it^{\frac{\alpha}{\alpha-1}}(x^{\alpha}-x)}\mathrm{d}x\mathrm{d}y +O\left(t^{-n}\right)\: ,
\end{align*}
as $t\to\infty$.
The asymptotic expansion of $\int_{0}^{\infty}\rho_{m}(yx)e^{it^{\frac{\alpha}{\alpha-1}}(x^{\alpha}-x)}\mathrm{d}x$ can be derived as  a direct consequence of the stationary phase principle (cf. \cite[Thm. 7.7.5]{hormander1990}). The only critical point of $x^{\alpha}-x$ lies at $x=\alpha^{-\frac{1}{\alpha-1}}$, the stationary phase principle therefore leads, after a routine computation, to
$$
\int_{0}^{\infty}\rho_{m}(yx)e^{it^{\frac{\alpha}{\alpha-1}}(x^{\alpha}-x)}\mathrm{d}x= A_{\alpha}t^{-\frac{\alpha}{2(\alpha-1)}}e^{-i(\alpha-1)\left(\frac{t}{\alpha}\right)^{\frac{\alpha}{\alpha-1}}}\rho_{m}\left(\alpha^{\frac{1}{1-\alpha}}y\right)+O\left(t^{-\frac{3\alpha}{2(\alpha-1)}}\right)\: , 
$$
as $t\to\infty$, uniformly for $y\in (1/2,2\alpha^{\frac{1}{\alpha-1}})$, where 
$$
A_{\alpha}=\sqrt{\frac{2\pi i}{\alpha^{\frac{1}{\alpha-1}}\left(\alpha-1\right)}}
$$
and the big $O$-constant depends only on $\alpha$, $m$, and the derivatives of order $\leq 2$ of $\psi$. Observe also that
$$
\int_{1/2}^{2 \alpha^{\frac{1}{\alpha-1}}} y^{-m}\rho_{m}(\alpha^{\frac{1}{1-\alpha}}y)\mathrm{d}y=\alpha^{\frac{1-m}{\alpha-1}}\int_{1/2}^{1}\frac{\psi(y)}{y}\mathrm{d}y=\alpha^{\frac{1-m}{\alpha-1}}\: .
$$
Hence,
\begin{equation}
\label{asympeq7}
v^{(m)}(t)= (-i)^{m}\alpha^{\frac{1/2-m}{\alpha-1}}t^{\frac{m-\alpha/2}{\alpha-1}}e^{-i(\alpha-1)\left(\frac{t}{\alpha}\right)^{\frac{\alpha}{\alpha-1}}}
\sqrt{\frac{2\pi i}{\alpha-1}}+ O\left(t^{\frac{m-3\alpha/2}{\alpha-1}}\right)\: ,
\end{equation}
as $t\to\infty$. The asymptotic estimates (\ref{asympK})--(\ref{asympKderivatives}) with constants (\ref{asympKconstants}) follow by combining (\ref{asympeq0}), (\ref{asympeq1}), (\ref{asympeq6}), and (\ref{asympeq7}). Thus, the proofs of $(a)$ and $(c)$ are complete. It remains to establish the property $(b)$. Notice that $K(1)=\Re e\:F(0)$ because of (\ref{asympeq0}). On the other hand, applying the Cauchy theorem to 
$$
\oint_{\mathsf{C}}\frac{e^{i\xi^{\alpha}}}{\xi}\: \mathrm{d}\xi
$$ 
in the contours $\mathsf{C}=[\varepsilon,r]\cup\{\xi=re^{i\vartheta}: \vartheta\in [0,\pi/(2\alpha)]\} \cup\{\xi=xe^{i\frac{\pi}{2\alpha}}:\: x\in[\varepsilon,r]\} \cup\{\xi=\varepsilon e^{i\vartheta}: \vartheta\in [0,\pi/(2\alpha)]\},$ one deduces that
\begin{align*}
F(0)&=\mathrm{F.p.} \int^{\infty}_{0} \frac{e^{iu^{\alpha}}}{u}\: \mathrm{d}u
\\
&
=\mathrm{F.p.} \int^{\infty}_{0}\frac{e^{-x^{\alpha}}}{x}\: \mathrm{d}x+\lim_{\varepsilon\to0^{+}}i\int_{0}^{\frac{\pi}{2\alpha}}e^{i \varepsilon^{\alpha}e^{i\alpha \vartheta}}\mathrm{d}\vartheta
\\
&
= \frac{1}{\alpha}\mathrm{F.p.} \int^{\infty}_{0}\frac{e^{-x}}{x}\: \mathrm{d}x+\frac{i\pi}{2\alpha}
=-\frac{\gamma}{\alpha}+\frac{i\pi}{2\alpha}\:.
\end{align*}
\end{proof}

The previous theorem and (\ref{zetacontformula}) imply that $\zeta_{C}$ is analytic in $\mathbb{C}\setminus\{1\}$ and actually has a simple pole at $s=1$ with residue 
$$
\mathrm{Res}_{ s=1}\zeta_{C}(s)=e^{-\left(1-\frac{1}{\alpha}\right)\gamma}\:.
$$
 Thus, in view of part $(c)$ from Theorem \ref{thlacseries}, the function $N_{C}$ fulfills the hypotheses of Lemma \ref{lemsufcondcesaro} with $a=\exp(-\gamma(1-1/\alpha))$ for every $n$. Furthermore, (\ref{eqnvwkahane1}) is also satisfied, as $\zeta_{C}(s)-a/(s-1)$ is entire. Since we are interested in violating Kahane's condition, we must investigate (\ref{eqnvwkahane2}). The Leibniz rule for differentiation gives
\[ \left(\frac{\zeta_{C}(1+it)}{t}\right)' H(|t|-1) =\left(-\frac{e^{-K(1+it)-\gamma}  K'(1+it)}{t^{2}} + \frac{2e^{-K(1+it)-\gamma}}{t^{3}}\right)iH(|t|-1)\: .
\]
Using (\ref{asympK}) from Theorem \ref{thlacseries} we see that the absolute value of the second term is asymptotic to $(2/t^{2})H(|t|-1) \in L^{2}(\R)$. Employing Lemma \ref{lemnvwkahane} and (\ref{asympK}) once again, we find that Kahane's condition for $N_{C}$ becomes equivalent to
\[ \frac{K'(1+it)}{t}H(|t|-1)\in L^{2}(\R)\: .
\]
The asymptotic behavior of $t^{-1}K'(1+it)$ is given by (\ref{asympK'}):
\[ \frac{K'(1+it)}{t} = A_{\alpha,1}|t|^{\frac{2-3\alpha/2}{\alpha-1}}\exp\left(-i\operatorname*{sgn}(t)\left(B_{\alpha}|t|^{\frac{\alpha}{\alpha-1}}-\frac{\pi}{4}\right)\right)
+ O\left(\frac{1}{|t|^{2}}\right), \quad |t| \rightarrow \infty\: .
\]
The second term above is $L^{2}$ for $|t|\geq1$, whereas the first term is $L^2$ only for $\alpha>3/2$. We summarize our results in the following proposition, which shows that our continuous number system satisfies the properties stated in Theorem \ref{thmain}. As usual, we set
$$
\operatorname*{Li}(x)= \int_{2}^{x}\frac{\mathrm{d}t}{\log t}\: .
$$
\begin{proposition}
\label{propcontsystem} Let $\alpha>1$. The functions $N_{C}$ and $\Pi_{C}$ satisfy
$$
N_{C}(x)=xe^{-\gamma\left(1-\frac{1}{\alpha}\right)}+ O\left(\frac{x}{\log^{n}x}\right) \quad (\mathrm{C}), \quad \mbox{for }n=1,2,\dots\: ,
$$
and 
\begin{equation}
\label{asympPIcont} \Pi_{C}(x)= \operatorname*{Li}(x)+ O\left(\frac{x}{\log^{\alpha}x}\right)\: .
\end{equation}
 One has 
$$
\int_{1}^{\infty}\left|\frac{\left(N_{C}(x)-xe^{-\gamma\left(1-\frac{1}{\alpha}\right)}\right)\log x}{x}\right|^{2}\frac{\mathrm{d}x}{x}=\infty 
$$
if and only if $1<\alpha\leq 3/2$.
\end{proposition}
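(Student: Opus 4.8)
The plan is to assemble the three assertions from the material already developed in Section~\ref{the continuous example}: apart from the estimate $(\ref{asympPIcont})$ for $\Pi_C$, essentially everything needed has been obtained in the discussion preceding the statement, and the task is mostly to organize it.

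For the Ces\`{a}ro behavior of $N_C$ I would first recall, from $(\ref{zetacontformula})$ and parts $(a)$--$(b)$ of Theorem~\ref{thlacseries}, that $\zeta_C$ is analytic on $\mathbb{C}\setminus\{1\}$ with a simple pole at $s=1$ of residue $a:=e^{-\gamma(1-1/\alpha)}$ (using $e^{-\gamma}e^{-K(1)}=e^{-\gamma}e^{\gamma/\alpha}=a$), so that $F(s):=\zeta_C(s)-a/(s-1)=e^{-\gamma}\bigl(e^{-K(s)}-e^{-K(1)}\bigr)/(s-1)$ is entire and in particular extends $C^{\infty}$-smoothly to $\Re e\,s\ge 1$. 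To apply Lemma~\ref{lemsufcondcesaro} for an arbitrary $n$, it remains to see that every $F^{(j)}(1+it)$ has at most polynomial growth in $t$; this follows by differentiating the above expression for $F$ and invoking the power bounds $K^{(m)}(1+it)=O\!\bigl(|t|^{(m-\alpha/2)/(\alpha-1)}\bigr)$ from part $(c)$ together with $|e^{-K(1+it)}|=O(|t|)$, which is read off from $(\ref{asympK})$. Lemma~\ref{lemsufcondcesaro} then yields the first displayed asymptotics, with this $a$.

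For the statement about $(\ref{nkahane})$, condition $(\ref{eqnvwkahane1})$ holds trivially since $F$ is entire, so by Lemma~\ref{lemnvwkahane} the Kahane integral is finite if and only if $(\ref{eqnvwkahane2})$ holds, and, as shown just before the proposition, the latter is equivalent to $t^{-1}K'(1+it)H(|t|-1)\in L^{2}(\mathbb{R})$. Inserting $(\ref{asympK'})$, this function equals $A_{\alpha,1}|t|^{(2-3\alpha/2)/(\alpha-1)}$ times a factor of modulus one, plus a term that is $O(|t|^{-2})$ and hence square-integrable over $|t|\ge 1$; consequently $t^{-1}K'(1+it)H(|t|-1)\in L^{2}$ if and only if $|t|^{(2-3\alpha/2)/(\alpha-1)}\in L^{2}(\{|t|\ge 1\})$, i.e. if and only if the exponent satisfies $\tfrac{2-3\alpha/2}{\alpha-1}<-\tfrac12$, which is exactly $\alpha>3/2$ (at $\alpha=3/2$ the exponent equals $-\tfrac12$ and $|t|^{-1}$ is not integrable at infinity). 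Since $N_C(x)\sim ax$ forces the constant in $(\ref{kahaneeq})$ to be precisely this $a$, the integral in $(\ref{nkahane})$ diverges exactly when $1<\alpha\le 3/2$.

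It remains to prove $(\ref{asympPIcont})$, which is the only genuinely new computation. Here I would argue by comparing derivatives: for $x\ge 2$ one has $\tfrac{d}{dx}\bigl(\Pi_C(x)-\operatorname*{Li}(x)\bigr)=-\cos(\log^{\alpha}x)/\log x$, so
\[
\Pi_C(x)-\operatorname*{Li}(x)=\int_{1}^{2}\frac{1-\cos(\log^{\alpha}u)}{\log u}\,\mathrm{d}u-\int_{2}^{x}\frac{\cos(\log^{\alpha}u)}{\log u}\,\mathrm{d}u .
\]
The first integral is a fixed constant, and for the second I would substitute $u=e^{v}$ and integrate by parts once using $\cos(v^{\alpha})=(\alpha v^{\alpha-1})^{-1}\bigl(\sin(v^{\alpha})\bigr)'$: the boundary term is $O\!\bigl(e^{v}v^{-\alpha}\bigr)$ evaluated at $v=\log x$, which is $O(x/\log^{\alpha}x)$, and the remaining integral is bounded by a constant multiple of $\int_{\log 2}^{\log x}e^{v}v^{-\alpha}\,\mathrm{d}v=O(x/\log^{\alpha}x)$, the last estimate holding because $e^{v}v^{-\alpha}$ is eventually increasing with derivative comparable to itself. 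This yields $(\ref{asympPIcont})$, and the proposition follows. There is no serious obstacle in the argument; the only point deserving attention is the bookkeeping around the threshold $\alpha=3/2$ in the $L^{2}$ analysis --- one must check that the borderline value lands on the ``divergent'' side and that the non--square-integrable leading term of $t^{-1}K'(1+it)$ cannot be cancelled by the $O(|t|^{-2})$ remainder, which is immediate since a sum of an $L^{2}$ function and a non-$L^{2}$ function is never in $L^{2}$.
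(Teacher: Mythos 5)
Your proposal is correct and follows essentially the same route as the paper. For the Ces\`aro assertion and the divergence of the Kahane integral, you assemble exactly the ingredients the paper places just before the proposition: $\zeta_C(s)-a/(s-1)$ is entire with $a=e^{-\gamma(1-1/\alpha)}$ by $(\ref{zetacontformula})$ and Theorem~\ref{thlacseries}$(a)$--$(b)$, the polynomial growth of its boundary derivatives comes from Theorem~\ref{thlacseries}$(c)$ together with $|e^{-K(1+it)}|=O(|t|)$, and Lemma~\ref{lemsufcondcesaro} then gives the Ces\`aro estimate; the Kahane analysis via Lemma~\ref{lemnvwkahane} and the exponent $(2-3\alpha/2)/(\alpha-1)$ is identical, with the threshold $\alpha=3/2$ correctly landing on the divergent side. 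For $(\ref{asympPIcont})$, both you and the paper substitute $u=e^v$ and integrate by parts once. The only noticeable difference is the treatment of the remaining oscillatory integral $\int_{\log 2}^{\log x}\sin(v^{\alpha})e^{v}v^{-\alpha}\,\mathrm{d}v$: the paper suggests iterating the integration by parts at least $(\alpha-1)^{-1}$ more times, whereas you observe that the crude bound $\int_{\log 2}^{\log x}e^{v}v^{-\alpha}\,\mathrm{d}v=O(x/\log^{\alpha}x)$ (which holds because $e^v v^{-\alpha}$ eventually has derivative $\sim$ itself) already delivers the stated order $O(x/\log^{\alpha}x)$. Your observation is valid and a bit simpler; the extra integrations by parts are only needed if one wants the sharper assertion that the boundary term dominates, which is not required for the proposition as stated.
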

\begin{proof} We only need to prove (\ref{asympPIcont}). This follows from a calculation,
\begin{align*}
\Pi_{C}(x)-\operatorname*{Li}(x) &= - \int^{x}_{2} \frac{\cos(\log^{\alpha}u)}{\log u}\mathrm{d}u + O(1)
\\
& = - \frac{1}{\alpha}\int^{x}_{2} \frac{u}{\log^{\alpha}u}\:\mathrm{d}(\sin(\log^{\alpha}u))+ O(1)
\\ 
& 
=  \frac{1}{\alpha} \int^{x}_{2} \frac{\sin(\log^{\alpha}u) }{\log^{\alpha}u}\:\mathrm{d}u - \int^{ x}_{ 2} \frac{\sin(\log^{\alpha}u) }{\log^{\alpha+1}u}\:\mathrm{d}u+ O\left(\frac{x}{\log^{\alpha }x}\right)
\\
&
= O\left(\frac{x}{\log^{\alpha }x}\right)\:,
\end{align*}
because
\begin{align*}
\int^{x}_{2} \frac{\sin(\log^{\alpha}u) }{\log^{\alpha}u}\:\mathrm{d}u\ll \int^{x}_{\sqrt{ x}}  \frac{\mathrm{d}u}{\log^{\alpha}u} +O(\sqrt{x})\ll\frac{x}{\log^{\alpha }x}\: ,
\end{align*}
and similarly the second integral has growth order $\ll x/\log^{\alpha+1}x$.
\end{proof}
\section{Discrete examples: Proof of Theorem \ref{thmain}}
\label{the discrete example}
We now discretize the family of continuous examples from the previous section. Let $\alpha>1$. We recall the functions of the continuous example were
\[ \Pi_{C}(x) = \int^{x}_{1} \frac{1 - \cos(\log^{\alpha} u)}{\log u}\: \mathrm{d}u \quad \mbox{ and } \quad \zeta_{C}(s) = \frac{e^{-\gamma}e^{-K(s)}}{s-1}\: ,
\]
where $K$ is the entire function studied in Theorem \ref{thlacseries}. Our set of generalized primes $P_{\alpha}$ is defined as in the introduction, namely, its $r$-th prime $p_{r}$ is $\Pi_{C}^{-1}(r)$. 

We shall now establish Theorem \ref{thmain} for $P_{\alpha}$. Throughout this section $\pi$, $\zeta$, $N$, and $\Pi$ (cf. (\ref{defriemann})) stand for the number-theoretic functions associated to $P_{\alpha}$. We choose to omit the subscript $P_{\alpha}$ not to overload the notation.  As an easy consequence of the definition we obtain the inequality $0\leq \Pi_{C}(x) - \pi(x) \leq 1$. By combining this observation with (\ref{asympPIcont}) from Proposition \ref{propcontsystem}, we obtain at once that $\pi$ satisfies the PNT
\begin{equation}
\label{discreteeq1} \pi(x)=\operatorname*{Li}(x)+ O\left(\frac{x}{\log^{\alpha}x}\right),
\end{equation}
where the only requirement is $\alpha>1$.

This shows that the asymptotic formula (\ref{ibpneq7}) from Theorem \ref{thmain} holds for $1<\alpha\leq 2$. Naturally, (\ref{discreteeq1}) implies that our set of generalized primes $P_{\alpha}$ satisfies a version of Mertens' second theorem, which we state in the next lemma because we shall need it below. The proof is a simple application of integration by parts, the relation $\pi(x)=\Pi_{C}(x)+O(1)$, and the explicit formula for $\Pi_{C}$; we therefore omit it. Notice that the asymptotic estimate is even valid for $0<\alpha\leq 1$, with the obvious extension of the definition of $P_{\alpha}$ for these parameters.
\begin{lemma} \label{correcprimes} Let $\alpha>0$. The generalized prime number system $P_{\alpha}$ satisfies the following Mertens type asymptotic estimate
\[ \sum_{p_{r} \leq x} \frac{1}{p_{r}} = \log \log x + M + O\left(\frac{1}{\log^{\alpha}x}\right)\: .
\]
for some constant $M=M_{\alpha}$.
\end{lemma}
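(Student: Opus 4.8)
The plan is to establish Lemma \ref{correcprimes} by a standard partial summation argument, starting from the prime number theorem with remainder for $P_\alpha$ already recorded in $(\ref{discreteeq1})$. First I would observe that, writing $p_r = \Pi_C^{-1}(r)$, we have the uniform estimate $0 \le \Pi_C(x) - \pi(x) \le 1$, so the sum $\sum_{p_r \le x} 1/p_r$ can be compared with the Stieltjes integral $\int_{1^-}^x t^{-1}\,\mathrm{d}\Pi_C(t)$ up to an error that is controlled by integration by parts: one has $\sum_{p_r \le x} 1/p_r = \int_{1^-}^x t^{-1}\,\mathrm{d}\pi(t)$ and the difference from $\int_{1^-}^x t^{-1}\,\mathrm{d}\Pi_C(t)$ is $O(1/x) + \int$ of a harmless term, hence absorbed into the constant $M$ and the $O(1/\log^\alpha x)$ remainder.

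Next I would compute the main term directly from the explicit formula $\Pi_C(x) = \int_1^x \frac{1-\cos(\log^\alpha u)}{\log u}\,\mathrm{d}u$. This gives
\[
\int_{1}^{x} \frac{\mathrm{d}\Pi_C(t)}{t} = \int_{1}^{x} \frac{1-\cos(\log^\alpha t)}{t\log t}\,\mathrm{d}t = \int_{0}^{\log x} \frac{1-\cos(v^\alpha)}{v}\,\mathrm{d}v,
\]
after the substitution $v = \log t$. The piece $\int_1^{\log x} \mathrm{d}v/v = \log\log x$ supplies the claimed leading term, while $\int_0^1 \frac{1-\cos(v^\alpha)}{v}\,\mathrm{d}v$ converges (the integrand is $O(v^{2\alpha-1})$ near $0$) and contributes to the constant $M$. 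The remaining task is to bound the tail $\int_1^{\log x} \frac{\cos(v^\alpha)}{v}\,\mathrm{d}v$, showing that it equals a constant plus $O(1/\log^\alpha x)$. This is exactly the type of oscillatory-integral estimate already carried out in the proof of Proposition \ref{propcontsystem}: substituting $w = v^\alpha$ and integrating by parts repeatedly (at least $\lceil(\alpha-1)^{-1}\rceil$ times, exactly as there) extracts the boundary term $\frac{\sin((\log x)^\alpha)}{\alpha(\log x)^\alpha}e^{-(\,\cdot\,)}$—wait, without the $e^t$ factor here the repeated integration by parts is even cleaner and directly yields convergence of $\int_1^\infty \frac{\cos(v^\alpha)}{v}\,\mathrm{d}v$ together with a tail bound $O((\log x)^{-\alpha})$.

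The only genuine point requiring care—and the one I would flag as the main obstacle—is the passage from the continuous integral $\int t^{-1}\mathrm{d}\Pi_C(t)$ to the discrete sum $\sum_{p_r\le x} p_r^{-1}$. Here one uses $\pi(t) = \Pi_C(t) + O(1)$ and integration by parts:
\[
\sum_{p_r \le x}\frac{1}{p_r} = \frac{\pi(x)}{x} + \int_1^x \frac{\pi(t)}{t^2}\,\mathrm{d}t = \frac{\Pi_C(x)}{x} + \int_1^x \frac{\Pi_C(t)}{t^2}\,\mathrm{d}t + O\!\left(\frac{1}{x}\right) + O\!\left(\int_1^x \frac{\mathrm{d}t}{t^2}\right),
\]
and both error terms are $O(1)$, which merely shifts the constant, while the main terms reassemble (after reversing the integration by parts on the continuous side) into $\int_{1^-}^x t^{-1}\mathrm{d}\Pi_C(t)$. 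Since $\Pi_C(x)/x = O((\log x)^{-1})$ this is consistent with the stated remainder for all $\alpha > 0$. Combining the three steps—discrete-to-continuous comparison, explicit evaluation of the main term, and the oscillatory tail estimate—yields the lemma; as the authors note, the computation is routine enough to omit in the paper.
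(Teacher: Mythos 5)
Your proposal is correct and follows the paper's own (omitted) proof exactly: integration by parts, the comparison $\pi(t)=\Pi_C(t)+O(1)$, and the explicit formula for $\Pi_C$, with the substitution $v=\log t$ reducing the main term to $\int_0^{\log x}(1-\cos v^\alpha)v^{-1}\,\mathrm{d}v$. Your momentary confusion about ``integrating by parts repeatedly'' is self-corrected---a single integration by parts applied to $\int_1^Y v^{-1}\cos(v^\alpha)\,\mathrm{d}v$, writing $\cos(v^\alpha)\,\mathrm{d}v=\alpha^{-1}v^{1-\alpha}\,\mathrm{d}(\sin v^\alpha)$, already produces the $O(Y^{-\alpha})$ tail for every $\alpha>0$---and the closing remark that ``$\Pi_C(x)/x=O(1/\log x)$ is consistent with the stated remainder'' is a harmless red herring: once the main terms are reassembled into $\int_{1^-}^x t^{-1}\,\mathrm{d}\Pi_C(t)$ no separate bound on $\Pi_C(x)/x$ is needed, which is just as well since $O(1/\log x)$ by itself would be too weak for $\alpha>1$.
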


We now concentrate in showing (\ref{Cesaroeqgi}) and (\ref{nkahane}). We will prove that they hold with the constant
\begin{equation}
\label{discreteconstant} a_{\alpha}= \exp\left(-\gamma\left(1-\frac{1}{\alpha}\right) + \int^{\infty}_{1}x^{-1}\mathrm{d}(\Pi - \Pi_{C})(x)\right)\: .
\end{equation}
 We express the zeta function of this prime number system in terms of $\zeta_{C}$. We find
\begin{equation}\label{formulazetadiscrete}\zeta(s) = \zeta_{C}(s)\exp\left(\int^{\infty}_{1} x^{-s} \mathrm{d}(\Pi-\Pi_{C})(x)\right)\: .
\end{equation}
Note that $\int^{\infty}_{1} x^{-s} \mathrm{d}(\Pi-\Pi_{C})(x)$ is analytic on the half-plane $\Re e\:s>1/2$ because $\Pi(x)-\Pi_{C}(x)=\Pi(x)-\pi(x)+\pi(x)-\Pi_{C}(x)=O(x^{1/2})+O(1)$. Employing Theorem \ref{thlacseries}, we see that, when $\alpha>1$, $\zeta$ is also analytic in $\Re e\:s>1/2$ except at $s=1$ and
$$
\mathrm{Res}_{ s=1}\zeta(s)=a_{\alpha}\:,
$$
where $a_{\alpha}$ is given by (\ref{discreteconstant}). Hence, the hypothesis (\ref{eqnvwkahane1}) from Lemma \ref{lemnvwkahane} is satisfied with $a_{\alpha}$ for all $\alpha>1$. We also mention the set of generalized primes $P_{\alpha}$ satisfies the Riemann hypothesis in the form: $\zeta(s)\neq0$ for $\Re e\:s>1/2$, $s\neq 1$. (This follows from the factorizations (\ref{formulazetadiscrete}), (\ref{zetacontformula}), and Part $(a)$ of Theorem \ref{thlacseries}.)

As we are interested in the growth behavior of $\zeta$ on the line \mbox{$\Re e$ $s = 1$}, we will try to control the term $\int^{\infty}_{1} x^{-1-it}\mathrm{d}(\Pi-\Pi_{C})(x)$. The following lemma gives a useful bound for it and this section will mostly be dedicated to its proof.

\begin{lemma} \label{lembegrenzingzeta} Let $\alpha\geq 1$. The discrete prime number system $P_{\alpha}$ satisfies the following bound:
\[ \left|\Re e \int^{\infty}_{1} x^{-1-it}\mathrm{d}(\Pi-\Pi_{C})(x)\right| = \left|\int^{\infty}_{1} \frac{\cos(t\log x)}{x}\mathrm{d}(\Pi-\Pi_{C})(x)\right| =O(\log \log |t|)\: .
\]
\end{lemma}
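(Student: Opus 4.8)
The plan is to split the Stieltjes integral $\int_{1}^{\infty} x^{-1}\cos(t\log x)\,\mathrm{d}(\Pi-\Pi_{C})(x)$ into a range where $x$ is small relative to $|t|$ and a range where $x$ is large, and to treat the two regimes by entirely different methods. Writing $u=\log x$ and recalling that $\pi(x)=\Pi_{C}(x)+O(1)$ with $0\leq \Pi_{C}(x)-\pi(x)\leq 1$, I would first reduce the measure $\mathrm{d}(\Pi-\Pi_{C})$ to the more tractable $\mathrm{d}(\pi-\Pi_{C})$ up to an error coming from the higher prime powers $\Pi-\pi=\sum_{n\geq 2}\pi(x^{1/n})/n=O(x^{1/2}/\log x)$, whose contribution to the integral (after an integration by parts in $x$) is $O(1)$ uniformly in $t$; this is harmless. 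So the heart of the matter is $\int_{1}^{\infty} x^{-1}\cos(t\log x)\,\mathrm{d}(\pi-\Pi_{C})(x)$, and the natural idea is to exploit that $\pi-\Pi_{C}$ has jumps of size $1$ exactly at the points $p_{r}=\Pi_{C}^{-1}(r)$ while $\Pi_{C}$ redistributes the same mass continuously. Concretely, on each interval $[p_{r},p_{r+1}]$ the function $\pi(x)-\Pi_{C}(x)$ drops from just below $0$ to just below $-1$ as $\Pi_{C}$ increases by $1$, so $\pi-\Pi_{C}$ is a sawtooth-type function with amplitude $O(1)$.

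For the \emph{small-$x$ range}, say $1\leq x\leq e^{|t|}$ (equivalently $\log x\leq |t|$), I would integrate by parts to move the derivative onto $\cos(t\log x)/x$, obtaining boundary terms of size $O(1)$ plus $\int (\pi(x)-\Pi_{C}(x))\,\mathrm{d}(x^{-1}\cos(t\log x))$. Since $\pi-\Pi_{C}$ is bounded, this integral is $O\bigl(\int_{1}^{e^{|t|}} |t|\,x^{-1}\,\mathrm{d}x\bigr)$ at worst, which is far too lossy; instead I would keep the sawtooth structure and sum over the blocks $[p_{r},p_{r+1}]$, on each of which $\mathrm{d}\Pi_{C}$ has total mass $1$ so that the ``diagonal'' cancellation between the jump of $\pi$ at $p_{r+1}$ and the continuous increase of $\Pi_{C}$ across the block leaves a remainder controlled by the oscillation of $x^{-1}\cos(t\log x)$ over a block of $\Pi_{C}$-mass $1$. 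Because $\mathrm{d}\Pi_{C}(x)=\frac{1-\cos(\log^{\alpha}x)}{\log x}\,\mathrm{d}x$, a block of unit mass near $x$ has $x$-length roughly $\log x$ (ignoring the vanishing of $1-\cos$, which only lengthens blocks and can be absorbed), so the per-block error is $O\bigl(x^{-1}\log x\cdot(|t|/x+1/x)\bigr)=O(|t|\log x / x^{2})$; summing this over $r$ with $p_{r}\leq x$ and using $p_{r}\asymp r\log r$ gives a convergent sum, \emph{except} for the first few blocks near $x=O(1)$, and it is precisely the dyadic ranges $x\in[2^{k},2^{k+1}]$ with $2^{k}\lesssim |t|$ that each contribute $O(1)$, producing the $\log\log|t|$: one gets $\sum_{k: 2^{k}\leq |t|, \text{ plus a shift}} O(1/k)$ over the relevant scales, i.e.\ $O(\log\log|t|)$.

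For the \emph{large-$x$ range} $x\geq e^{|t|}$, I would use the effective PNT \eqref{asympPIcont}: $\pi(x)-\Pi_{C}(x)=O(1)$ is too weak, but combining $\Pi_{C}(x)-\operatorname{Li}(x)=O(x/\log^{\alpha}x)$ with $\pi(x)-\operatorname{Li}(x)=O(x/\log^{\alpha}x)$ from \eqref{discreteeq1} only recovers $O(1)$ again, so here I instead integrate by parts once to land on $\int_{e^{|t|}}^{\infty}(\Pi(x)-\Pi_{C}(x))\,\mathrm{d}(x^{-1-it})$ — wait, the point is subtler: the true gain in this range is that $\cos(t\log x)$ oscillates in $u=\log x$ with period $2\pi/|t|$, which for $u\geq |t|$ is a \emph{slow} oscillation, so the honest tool is a second integration by parts against the block structure combined with the smoothness of $\mathrm{d}\Pi_{C}$, yielding a tail bounded by $\int_{e^{|t|}}^{\infty}x^{-1}(\log x)^{-1}\,\mathrm{d}x\cdot O(1)=O(\log\log\text{-type})$ plus a genuinely convergent remainder. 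I expect the main obstacle to be exactly this bookkeeping of how the unit-mass blocks of $\Pi_{C}$ interact with the discrete jumps: one must show the per-block discrepancy between $\mathrm{d}\pi$ and $\mathrm{d}\Pi_{C}$, when integrated against the smooth kernel $x^{-1-it}$, telescopes with error $O(|t|/x^{2}\cdot\text{blocklength})$ \emph{uniformly}, and the vanishing of $1-\cos(\log^{\alpha}x)$ (which makes some blocks very long) does not spoil this — handling those long blocks, where $\Pi_{C}$ is nearly flat and $\pi$ has no jump for a long stretch, is the delicate case and is where the hypothesis $\alpha\geq 1$ enters to keep the block lengths under control. Once both ranges are bounded by $O(\log\log|t|)$, taking real parts and recalling $\Re e\int x^{-1-it}\mathrm{d}(\Pi-\Pi_{C})=\int x^{-1}\cos(t\log x)\,\mathrm{d}(\Pi-\Pi_{C})$ finishes the proof.
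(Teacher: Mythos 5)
Your overall strategy — absorb the positive measure $d(\Pi-\pi)$ absolutely, reduce to $\pi-\Pi_{C}$, split the integral at the jump points $p_{r}$, use that $\int_{p_{r}}^{p_{r+1}}d\Pi_{C}=1$ to get per-block cancellation, and obtain the $\log\log$ from a Mertens sum — is exactly the route the paper takes. However, two concrete steps in your sketch do not hold up.

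The crossover threshold $x=e^{|t|}$ is exponentially too large. In the ``small-$x$'' regime the only uniform per-block bound is the trivial $O(1/p_{r})$ (the mean-value gain $|t|\cdot(\text{blocklength})/p_{r}^{2}$ becomes effective only once $p_{r}$ exceeds a fixed power of $|t|$, since the phase change $|t|\log(p_{r+1}/p_{r})$ across a block is $\gg 1$ below that scale). If you keep the trivial bound up to $p_{r}\leq e^{|t|}$, Lemma~\ref{correcprimes} gives $\sum_{p_{r}\leq e^{|t|}}1/p_{r}=\log|t|+O(1)$, off by a full logarithm. The paper crosses over at $p_{r}=|t|^{13}$ — a \emph{polynomial} threshold — so that $\sum_{p_{r}\leq|t|^{13}}2/p_{r}=O(\log\log|t|)$; this polynomial cutoff is where the $\log\log|t|$ actually comes from. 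Notice that your own dyadic count ``$\sum_{2^{k}\lesssim|t|}O(1/k)$'' tacitly assumes a cutoff at a power of $|t|$ and thus contradicts your stated split at $e^{|t|}$.

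Separately, the heuristic ``a unit-mass block of $d\Pi_{C}$ near $x$ has $x$-length $\asymp\log x$'' cannot be ``absorbed'' — when $\log^{\alpha}x$ is near a multiple of $2\pi$ the density $\frac{1-\cos\log^{\alpha}x}{\log x}$ nearly vanishes and the gaps are much longer than $\log p_{r}$. The paper proves and uses the much weaker Hoheisel--Ingham bound $p_{r+1}-p_{r}<p_{r}^{2/3}\log p_{r}$ (Lemma~\ref{lemafschattingpriemafstand}), and it is this weaker gap estimate that forces the large exponent $13$ in the threshold: the phase term becomes $\leq|t|p_{r}^{2/3+\varepsilon}/p_{r}^{2}$, which is summable precisely for $p_{r}\geq|t|^{13}$. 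Finally, your closing assertion that $\int_{e^{|t|}}^{\infty}x^{-1}(\log x)^{-1}\,\mathrm{d}x$ is of ``$\log\log$-type'' is false — that integral diverges — but with the corrected polynomial threshold and the Hoheisel--Ingham gap bound in hand, the middle- and large-$x$ contribution converges absolutely and no separate tail argument is needed, which is how the paper's proof actually closes.
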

The same bound holds for the imaginary part and the proof is exactly the same. We first give a Hoheisel-Ingham type estimate for the gaps between consecutive primes from $P_{\alpha}$.

\begin{lemma}\label{lemafschattingpriemafstand}
Let $\alpha\geq 1$. Then,  we have the bound $p_{r+1}-p_r<p_r^{2/3}\log p_r$ for sufficiently large $r$.
\end{lemma}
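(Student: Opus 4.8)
The bound $p_{r+1}-p_r<p_r^{2/3}\log p_r$ should follow by comparing the discrete counting function $\Pi_C$ (whose values at integers define the $p_r$) with its smooth description, and then invoking the classical Hoheisel--Ingham zero-density/short-interval information---but here, crucially, we have an \emph{explicit} formula for $\Pi_C$, so no genuine zero-density input is needed and the estimate becomes elementary. The idea is that $p_r$ and $p_{r+1}$ are consecutive solutions of $\Pi_C(p_r)=r$, $\Pi_C(p_{r+1})=r+1$, so $p_{r+1}-p_r$ is controlled by how slowly $\Pi_C$ increases, i.e.\ by a lower bound for $\Pi_C(x+h)-\Pi_C(x)$ on intervals of length $h=x^{2/3}\log x$.

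\medskip

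\textbf{Step 1: Reduce to a lower bound for the increment of $\Pi_C$.} Since $\Pi_C$ is continuous and strictly increasing for $x$ large (its density is $(1-\cos(\log^\alpha u))/\log u\ge 0$, and is strictly positive apart from a discrete set of $u$), we have $p_r=\Pi_C^{-1}(r)$ and hence $p_{r+1}-p_r<h$ as soon as $\Pi_C(p_r+h)-\Pi_C(p_r)>1$. Setting $x=p_r$ (so $x\to\infty$ as $r\to\infty$) and $h=x^{2/3}\log x$, it suffices to show
\[
\Pi_C(x+h)-\Pi_C(x)=\int_x^{x+h}\frac{1-\cos(\log^\alpha u)}{\log u}\,\mathrm{d}u>1
\]
for all sufficiently large $x$.

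\medskip

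\textbf{Step 2: Estimate the increment from below.} On $[x,x+h]$ with $h=x^{2/3}\log x=o(x)$ we have $\log u=\log x+O(x^{-1/3}\log x)=\log x\,(1+o(1))$, so $1/\log u\ge (1+o(1))/\log x$. The only obstacle to a lower bound is the oscillatory factor $1-\cos(\log^\alpha u)$, which can be small on short subintervals where $\log^\alpha u$ is near a multiple of $2\pi$. However, the ``bad'' set where $1-\cos(\log^\alpha u)<1$, say, has controlled measure: writing $t=\log u$, the bad set in the $t$-variable is $\bigcup_k\{t:\ |t^\alpha-2\pi k|<c\}$, and on the range $t\in[\log x,\log(x+h)]$ of length $\asymp x^{-1/3}\log x$ the density of this bad set (in $t$) is $O(t^{1-\alpha})=O(\log^{1-\alpha}x)\to 0$ when $\alpha>1$ and is a fixed constant $<1$ when $\alpha=1$; pulling back via $u=e^t$, $\mathrm{d}u=e^t\,\mathrm{d}t$, the good set carries $u$-measure $\ge \tfrac12 h$ (all constants absorbed), on which the integrand is $\ge c/\log x$. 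Hence $\Pi_C(x+h)-\Pi_C(x)\gg h/\log x=x^{2/3}\to\infty$, which is more than enough. (One must be slightly careful in the case $\alpha=1$, where $\log^\alpha u=\log u$ and the cosine oscillates on scale $1$ in $t$ but the $t$-interval has length $\asymp x^{-1/3}\log x\to\infty$, so again a fixed positive proportion of the interval is good; the displayed inequality holds a fortiori.)

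\medskip

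\textbf{Main obstacle.} The only real point requiring care is the oscillation of $1-\cos(\log^\alpha u)$: near its zeros the integrand degenerates, so one cannot simply bound the integrand below pointwise. The fix is the measure-theoretic argument in Step 2---quantifying that the set where the cosine is close to $1$ is thin (or at least of proportion bounded away from $1$) on the relevant short interval. Everything else---the change of variables, the elementary estimate $\log u=\log x(1+o(1))$, and the translation of an increment lower bound into a prime-gap upper bound---is routine. Note the chosen gap $x^{2/3}\log x$ is far larger than what this explicit example actually needs ($\gg x^{2/3}$ already suffices), but the stated form is convenient for the later arguments, so we keep it.
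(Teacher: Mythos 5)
Your reduction in Step 1 is the same as the paper's, but Step 2 contains a fatal quantitative error, and the conclusion does not follow.

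You assert that the $t=\log u$ interval $[\log x,\log(x+h)]$ has length ``$\asymp x^{-1/3}\log x \to\infty$'' (your parenthetical for $\alpha=1$), but in fact $x^{-1/3}\log x\to 0$. More to the point, the change in $\log^\alpha u$ across $[x,x+h]$ is
\[
\log^\alpha(x+h)-\log^\alpha x \;\asymp\; \alpha\log^{\alpha-1}(x)\cdot\frac{h}{x}\;=\;\frac{\alpha\log^{\alpha}x}{x^{1/3}}\;\longrightarrow\;0,
\]
which is far \emph{smaller} than $2\pi$. So the interval does not contain even a fraction of one period of $\cos(\log^\alpha u)$; it could sit entirely inside a neighborhood of a zero of $1-\cos$. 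Consequently, your claims that ``the density of the bad set is $O(\log^{1-\alpha}x)\to0$'' and that ``the good set carries $u$-measure $\ge\tfrac12 h$ on which the integrand is $\ge c/\log x$'' are both unjustified: there need not be any point of the interval where $1-\cos(\log^\alpha u)$ is bounded below by a fixed constant $c>0$. The asserted bound $\Pi_C(x+h)-\Pi_C(x)\gg x^{2/3}$ is therefore not proved (and is in fact false in general for this $h$).

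The paper's argument handles exactly this degenerate regime. It does not try to get $1-\cos$ bounded below by a constant; instead it exploits the quadratic vanishing of $1-\cos$ near its zeros. Splitting $[p_r,p_r+d]$ (with $d=p_r^{2/3}\log p_r$) into four equal quarters, each quarter advances $\log^\alpha u$ by at least $d/(5p_r)$, so at least one quarter $I$ has $\operatorname{dist}(\log^\alpha u,\,2\pi\mathbb Z)\gg d/p_r$ throughout, whence $1-\cos(\log^\alpha u)\gg (d/p_r)^2$ on $I$ (using $1-\cos t\ge t^2/3$ for small $|t|$). Integrating this tiny but sufficient lower bound over $|I|=d/4$ gives $\int_I(1-\cos(\log^\alpha u))\,\mathrm du \gg d^3/p_r^2 = \log^3 p_r$, which exceeds $2\log p_r$, and hence $\Pi_C(p_r+d)-\Pi_C(p_r)>1$. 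The moral is that the choice $d\asymp p_r^{2/3}\log p_r$ is precisely calibrated so that the quadratic lower bound times the interval length clears the needed threshold $\log p_r$; a ``positive proportion of the interval is good'' argument is structurally wrong here.
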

\begin{proof}
Set $d=p_r^{2/3}\log p_r$. It suffices to show that for $p_r$ sufficiently large we have
\[
\int_{p_r}^{p_r+d} \frac{1-\cos(\log^\alpha u)}{\log u}\;\mathrm{d}u > 1\:, 
\]
which is certainly implied by $\int_{p_r}^{p_r+d}(1-\cos(\log^\alpha u))\;\mathrm{d}u>2\log p_r$. If $p_r<u<p_r+d$, then 
\[
\log^\alpha \left(u+\frac{d}{4}\right)-\log^\alpha u\geq \frac{\alpha d\log^{\alpha-1}u}{4(u+\frac{d}{4})}\geq\frac{d}{5p_r}\:.
\]
Since $\cos t\leq 1-t^{2}/3$ for $|t|<\pi/4$, this implies that among the four intervals $[p_r, p_r+d/4]$, \ldots, $[p_r+3d/4, p_r+d]$ there is one, which we call $I$, such that
\[
\cos( \log^\alpha u)\leq 1-\frac{d^2}{75p_r^2}
\]
for all $u\in I$. The integrand in question is non-negative for all $u$, we may thus restrict the range of integration to $I$ and obtain as lower bound
\[
 \int_{I}(1-\cos(\log^\alpha u))\;\mathrm{d}u > \frac{d}{4}\cdot \frac{d^2}{75p_r^2} = \frac{\log^3 p_r}{300}>2\log p_r\:.
\]
Hence our claim follows.
\end{proof}

We can now give a proof of Lemma \ref{lembegrenzingzeta}.

{\noindent\textit{Proof of Lemma \ref{lembegrenzingzeta}.} }
First we are going to change the measure we integrate by,
\begin{align*} \left|\int^{\infty}_{1} \frac{\cos(t\log x)}{x}\:\mathrm{d}(\Pi-\Pi_{C})(x)\right| & \leq \left|\int^{\infty}_{1} \frac{\cos(t\log x)}{x}\: \mathrm{d}(\Pi-\pi)(x)\right| \\
& \quad + \left|\int^{\infty}_{1} \frac{\cos(t\log x)}{x}\: \mathrm{d}(\pi-\Pi_{C})(x)\right|\: .
\end{align*}
We can estimate the first integral as follows:
\[ \left|\int^{\infty}_{1} \frac{\cos(t\log x)}{x}\:\mathrm{d}(\Pi-\pi)(x)\right| \leq \int^{\infty}_{1} \frac{1}{x}\:\mathrm{d}(\Pi-\pi)(x) < \infty\: ,
\]
where we have used that $d(\Pi-\pi)$ is a positive measure and $\Pi(x) - \pi(x) = O(x^{1/2})$. Only the second integral remains to be estimated. We are going to split the integral in intervals of the form $[p_{r},p_{r+1})$. Such an interval delivers the contribution
\[ \left|\int_{[p_{r},p_{r+1})} \frac{\cos(t\log x)}{x}\mathrm{d}(\pi-\Pi_{C})(x)\right| = \left|\int^{p_{r+1}}_{p_{r}}\left(\frac{\cos(t\log p_{r})}{p_{r}} - \frac{\cos(t\log x)}{x}\right)\:\mathrm{d}\Pi_{C}(x)\right|\: ,
\]
since $\int^{p_{r+1}}_{p_{r}}\mathrm{d}\Pi_{C}(x) = 1$. This integral can be further estimated by
\begin{align*}
& \left|\int^{p_{r+1}}_{p_{r}}\left(\frac{\cos(t\log p_{r})}{p_{r}} - \frac{\cos(t\log x)}{x}\right)\mathrm{d}\Pi_{C}(x)\right|\leq
\\
&
\int^{p_{r+1}}_{p_{r}}\left|\frac{\cos(t\log p_{r})}{p_{r}} - \frac{\cos(t\log x)}{p_{r}}\right|\mathrm{d}\Pi_{C}(x)+ \int^{p_{r+1}}_{p_{r}}\left|\frac{\cos(t\log x)}{p_{r}} - \frac{\cos(t\log x)}{x}\right|\mathrm{d}\Pi_{C}(x)\: .
\end{align*}
The second of these integrals can be bounded by
\[\int^{p_{r+1}}_{p_{r}}\left(\frac{1}{p_{r}} - \frac{1}{p_{r+1}}\right)\mathrm{d}\Pi_{C}(x) = \frac{p_{r+1} - p_{r}}{p_{r}p_{r+1}} \leq \frac{p_{r}^{2/3 + \varepsilon}}{p_{r}^{2}}\: ,
\]
by Lemma \ref{lemafschattingpriemafstand}, and after summation on $r$ this gives a contribution which is finite and does not depend on $t$. We now bound the other integral. By the mean value theorem, we have
\begin{align*}
\int^{p_{r+1}}_{p_{r}}\left|\frac{\cos(t\log p_{r})}{p_{r}} - \frac{\cos(t\log x)}{p_{r}}\right|\mathrm{d}\Pi_{C}(x) & \leq \frac{|t \log p_{r+1} - t \log p_{r} |}{p_{r}}\\
& \leq \frac{|t|}{p_{r}} \log\left(1 +  \frac{p_{r}^{2/3 + \varepsilon}}{p_{r}}\right)\\
& \leq \frac{|t|}{p_{r}^{4/3-\varepsilon}} \leq \frac{1}{p_{r}^{5/4}}
\end{align*}
for $p_{r} \geq| t|^{13}$ and $p_r$ sufficiently large. As the sum over finitely many small $p_r$ is $O(1)$, the latter condition is insubstantial. After summation on $r$ we see that these integrals deliver a finite contribution which does not depend on $t$. Finally, it remains to bound the integrals for $p_{r} \leq |t|^{13}$. We can estimate these as follows because of Corollary \ref{correcprimes}:
\[ \sum_{p_{r}\leq|t|^{13}} \int^{p_{r+1}}_{p_{r}}\left|\frac{\cos(t\log p_{r})}{p_{r}} - \frac{\cos(t\log x)}{p_{r}}\right|\mathrm{d}\Pi_{C}(x) \leq \sum_{p_{r}\leq |t|^{13}} \frac{2}{p_{r}} = O(\log \log |t|)\: .
\]
{\hfill$\square$\par\bigskip}

With the same techniques the following bounds can also be established:
\begin{equation}\label{estimatezetaderivatives} \int^{\infty}_{1} x^{-1-it} \log^{n} x \:\mathrm{d}(\Pi-\Pi_{C})(x)= O(\log^{n} |t|), \quad n=1,2,3,\dots\: .
\end{equation}

We have set the ground for the remaining part of the proof of Theorem \ref{thmain}.
With these bounds it is clear that $\zeta(1+it),\zeta'(1+it)$, $\zeta''(1+it)$, \dots have at most polynomial growth. By Lemma \ref{lemsufcondcesaro} the counting function $N$ of this discrete prime number system satisfies the Ces\`{a}ro behavior (\ref{Cesaroeqgi}) with the constant (\ref{discreteconstant}) whenever $\alpha>1$.
For Kaha\-ne's condition we calculate $(\zeta(1+it)t^{-1})'$ by the Leibniz rule. All the involved terms are $L^{2}$ except possibly for
\begin{equation}
\label{finalkahane} \frac{e^{-K(1+it)} K'(1+it) \exp\left(\int^{\infty}_{1} x^{-1-it} \mathrm{d}(\Pi-\Pi_{C})(x)\right)}{t^{2}}\ .
\end{equation}
Using the fact that there exists an $m \in \mathbb{N}$ such that\footnote{The proof of Lemma \ref{lembegrenzingzeta} shows that $m=2$ suffices.}
$$\left|\exp\left(\int^{\infty}_{1} x^{-1-it} \mathrm{d}(\Pi-\Pi_{C})(x)\right)\right| \gg\frac{1}{\log^{m}|t|} \quad \mbox{for } |t|\gg1\:,$$
 and applying Theorem \ref{thlacseries}, exactly as in the discussion from Section \ref{the continuous example}, we find that (\ref{finalkahane}) is not $L^{2}$ when $1<\alpha < 3/2$. Lemma \ref{lemnvwkahane} yields (\ref{nkahane}) for $1<\alpha<3/2$ and Theorem \ref{thmain} has been so established for $P_{\alpha}$.

\begin{remark} If $\alpha>3/2$ then $P_{\alpha}$ does satisfy Kahane's condition, as also follows from the above argument. In contrast to Proposition \ref{propcontsystem}, whether Kahane's condition holds true or false for $P_{3/2}$ is an open question.
\end{remark}
\section{On the examples of Diamond and Beurling. Proof of Theorem \ref{thDiamond}}
\label{Diamond example}

In the previous section we extracted a discrete example from a continuous one by applying Diamond's discretization procedure used in \cite{diamond2} to show the sharpness of Beurling's PNT. However, our technique used to prove that our family of discrete examples have the desired properties from Theorem \ref{thmain} was quite different (Diamond's technique is rather based on operational calculus for the multiplicative convolution of measures). In this section we show how our method can also be applied to provide an alternative analysis of the Diamond-Beurling examples for the sharpness of the condition $\gamma=3/2$ in Beurling's theorem. In fact, our techniques below leads to a more precise asymptotic formula for the generalized integer counting function of Diamond's example. So, the goal of this section is to prove Theorem \ref{thDiamond}.
\par
We recall that Beurling's example provided in \cite{beurling} is the Riemann prime counting function
\[ \Pi_{C,1}(x) = \int^{x}_{1} \frac{1 - \cos(\log u)}{\log u}\mathrm{d}u\: ,  
\]
corresponding to the case $\alpha=1$ in (\ref{Riemannconteq}). Its associated zeta function is
\[ \zeta_{C,1}(s) := \left( 1 + \frac{1}{(s-1)^{2}}\right)^{1/2}=\exp\left(\int_{1}^{\infty}x^{-s}\mathrm{d}\Pi_{C,1}(x)\right).
\]
Diamond's example $P_{1}$ is then the case $\alpha=1$ of (\ref{eqdiscrete}). We immediately get 
\begin{equation*}
\Pi_{C,1}(x)=  \frac{x}{\log x}\left(1-\frac{\sqrt{2}}{2}\cos \left(\log x-\frac{\pi}{4}\right)\right) +O\left(\frac{x}{\log^{2} x}\right)
\end{equation*}
and, since $\pi_{P_{1}}(x)=\Pi_{C,1}(x)+O(1)$,
\begin{equation}
\label{asympdiamondexeq1}
\pi_{P_{1}}(x)=\frac{x}{\log x}\left(1-\frac{\sqrt{2}}{2}\cos \left(\log x-\frac{\pi}{4}\right)\right)  +O\left(\frac{x}{\log^{2} x}\right)\: ,
\end{equation}
whence neither $\Pi_{C,1}$ nor $\pi_{P_{1}}$ satisfy the PNT.

To study $N_{C,1}$ and $N_{P_{1}}$, we need a number of properties of their zeta functions on $\Re e\: s=1$. We control $\zeta_{C,1}$ completely. On this line $\zeta_{C,1}$ is analytic except for a simple pole at $s=1$ with residue 1, and two branch singularities at $s=1+i$ and $s=1-i$, where $\zeta_{C,1}$ is still continuous. Writing $\zeta_{C,1}(s)=(s-1-i)^{1/2}(s-1+i)^{1/2}(s-1)^{-1}$, we have  around $1\pm i$ the expansions
\begin{equation}
\label{diamondexeq2}
\zeta_{C,1}(s) = (1-i)(s-1- i)^{1/2}+ \sum_{k=1}^{\infty} a_{k} (s-1- i)^{k+1/2}\: , \quad |s-1- i|<1\: .
\end{equation}
and
\begin{equation}
\label{diamondexeq3}
\zeta_{C,1}(s) = (1+i)(s-1+ i)^{1/2}+ \sum_{k=1}^{\infty} \overline{a}_{k} (s-1+ i)^{k+1/2}\: , \quad |s-1+ i|<1\: ,
\end{equation}
where explicitly $a_{k}=(1-i)i^{k}\sum_{j=0}^{k}\binom{1/2}{j}(-1/2)^{j}$. On the other hand, $\int_{1}^{\infty} x^{-s}\mathrm{d}(\Pi_{P_{1}}-\Pi_{C,1})(x)$ is analytic on the half-plane $\Re e\:s>1/2$, where $\Pi_{P_{1}}$ is the Riemann generalized prime counting function associated to $P_{1}$. So, 
\begin{equation}
\label{diamondexeq4}
\zeta_{P_{1}}(s)= \left( 1 + \frac{1}{(s-1)^{2}}\right)^{1/2}\exp\left(\int_{1}^{\infty} x^{-s}\mathrm{d}(\Pi_{P_{1}}-\Pi_{C,1})(x)\right)\: , 
\end{equation}
and we obtain that $\zeta_{P_{1}}$ shares similar analytic properties as those of $\zeta_{C,1}$, namely, it has a simple pole at $s=1$, with residue
\begin{equation}
\label{diamondexeq5}
c:=\mathrm{Res}_{s=1}\zeta_{P_{1}}(s)=\exp\left(\int_{1}^{\infty} x^{-1}\mathrm{d}(\Pi_{P_{1}}-\Pi_{C,1})(x)\right)>0\: , 
\end{equation}
and two branch singularities at $s=1\pm i$. We also have the expansions at $s=1\pm i$
\begin{equation}
\label{diamondexeq6}
\zeta_{P_1}(s) = b_{0}(s-1-i)^{1/2}+ \sum_{k=1}^{\infty} b_{k} (s-1- i)^{k+1/2}\: , \quad |s-1-i|<1/2\: ,
\end{equation}
and 
\begin{equation}
\label{diamondexeq7}
\zeta_{P_1}(s) = \overline{b}_{0}(s-1+i)^{1/2}+ \sum_{k=1}^{\infty} \overline{b}_{k} (s-1+ i)^{k+1/2}\: , \quad |s-1+i|<1/2\: ,
\end{equation}
where $b_{0}=(1-i)\exp\left(\int_{1}^{\infty}x^{-1-i}\mathrm{d}(\Pi_{P_{1}}-\Pi_{C,1})(x)\right)\neq 0$ and the rest of the constants $b_{j}$ come from (\ref{diamondexeq2}) and the Taylor expansion of $\exp\left(\int_{0}^{\infty}x^{-s}\mathrm{d}(\Pi_{P_{1}}-\Pi_{C,1})(x)\right)$ at $s=1+i$.

We shall deduce full asymptotic series for $N_{P_{1}}(x)$ and $N_{C,1}(x)$ simultaneously from the ensuing general result.

\begin{theorem} \label{thremaindertauberian} Let $N$ be non-decreasing and vanishing for $x\leq 1$ with zeta function $\zeta(s)=\int_{1^{-}}^{\infty}x^{-s}dN(x)$ convergent on $\Re e\: s>1$. Suppose there are constants $a, r_{1}, \dots,r_{n}\in[0,\infty)$ and $\theta_{1},\dots,\theta_{n}\in[0,2\pi)$ such that
\begin{equation}\label{remaindereq1} G(s) := \zeta(s) - \frac{a}{s-1} - s\sum^{n}_{j=1}\left( r_{j}e^{\theta_{j} i} (s-1-i)^{j -\frac{1}{2}} + r_{j}e^{-\theta_{j}i} (s-1+i)^{j -\frac{1}{2}}\right)
\end{equation}
admits a $C^{n}$-extension to the line \mbox{$\Re e$ $s = 1$} and 
\[ \left|G^{(j)}(1+it)\right| = O(\left|t\right|^{\beta + n - j})\:, \quad |t|\to\infty\:, \quad j=0,1,\dots,n\: ,
\]
for $\beta \geq 0$.
Then 
\[ N(x) = ax + \frac{2x}{\log^{1/2} x}\sum^{n}_{j=1} \frac{r_{j}\cos(\log x+\theta_{j})}{\Gamma(-j+1/2)\log^{j}x}  + O\left(\frac{x}{\log^{\frac{n}{1+\beta}} x}\right), \quad x \rightarrow \infty\: ,
\]
\end{theorem}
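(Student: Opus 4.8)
The plan is to mimic the Tauberian argument of Lemma \ref{lemsufcondcesaro}, but this time track the singular terms exactly instead of discarding them. Write $N(x) = axH(x-1) + xR(\log x)$ with $R$ supported in $[0,\infty)$; the Wiener--Ikehara theorem applied to the hypotheses gives $N(x)\sim ax$, hence $R\in\mathcal{S}'(\R)$. A computation identical to the one in Lemma \ref{lemsufcondcesaro} yields $\zeta(s) = a/(s-1) + a + s\mathcal{L}\{R;s-1\}$ for $\Re e\:s>1$, so from \eqref{remaindereq1} we get
\[
\langle R(u+h),\phi(u)\rangle = \frac{1}{2\pi}\int_{-\infty}^{\infty}\frac{\zeta(1+it)-a/(it)}{1+it}\,\hat{\phi}(-t)e^{iht}\,\mathrm{d}t,
\]
and substituting \eqref{remaindereq1} splits this into a ``smooth part'' coming from $G$ and an explicit ``singular part'' coming from the $2n$ terms $r_je^{\pm\theta_j i}(s-1\mp i)^{j-1/2}$. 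For the $G$-part, integration by parts $n$ times together with the growth bound $|G^{(j)}(1+it)|=O(|t|^{\beta+n-j})$ shows (exactly as in Lemma \ref{lemsufcondcesaro}, except now one only gains a power $h^{-n}$ against a polynomial loss $|t|^{\beta+n}$) that this contributes a quasiasymptotic remainder of order $O(h^{-n/(1+\beta)})$ in $\mathcal{D}'(0,\infty)$; this is where the exponent $n/(1+\beta)$ enters, via the optimization balancing the $h^{-n}$ gain against the $|t|^{\beta+n}$ polynomial factor after testing against a Schwartz function and rescaling. Converting back through \cite[Thm.~4.1]{vindas} and the structural theorem \cite[Thm.~2.42]{p-s-v} gives the $O(x/\log^{n/(1+\beta)}x)$ error term.

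For the singular part, the key computation is the (inverse) Laplace/Fourier transform of $s\mapsto (s-1\mp i)^{j-1/2}$. The point is that $\mathcal{L}^{-1}$ of $(s-1-i)^{j-1/2}$, as a boundary value, is (up to the Heaviside factor and a shift) the distribution $e^{(1+i)u}u^{-j-1/2}/\Gamma(-j+1/2)$, so the term $r_je^{\theta_j i}(s-1-i)^{j-1/2}$ contributes to $xR(\log x)$ a piece asymptotic to
\[
r_j e^{\theta_j i}\,\frac{x e^{i\log x}}{\Gamma(-j+1/2)\,\log^{j+1/2}x},
\]
and adding the conjugate term $r_je^{-\theta_j i}(s-1+i)^{j-1/2}$ produces the real combination $2r_j\cos(\log x+\theta_j)/(\Gamma(-j+1/2)\log^{j+1/2}x)$, matching the claimed main terms after pulling out the common $x/\log^{1/2}x$. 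The factor $s$ in front of the bracket in \eqref{remaindereq1} only perturbs these by lower-order terms (replacing $\log^{j+1/2}$ by $\log^{j+1/2}$ up to $O(\log^{-1})$ corrections absorbed into the remainder, since $s = (s-1\mp i) + 1 \pm i$ and the extra $(s-1\mp i)$ bumps the exponent by one), so they can be folded into the $j$-sum or the error term. I would make this rigorous by testing $xR(\log x)$ against dilated test functions exactly as in Lemma \ref{lemsufcondcesaro}, computing the relevant Beta-type integrals, and invoking the same structural theorem to pass from the quasiasymptotic expansion to a pointwise Ces\`aro-free asymptotic (here the expansion is genuinely pointwise because each singular term, being a fixed distribution with a clean power-law profile, has ordinary asymptotics, not merely averaged ones).

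The main obstacle I expect is bookkeeping the branch singularities cleanly: one must fix the branch of $(s-1\mp i)^{j-1/2}$ consistently on $\Re e\:s=1$, verify that the boundary-value distributions $(\cdot\mp i0)^{j-1/2}$ are well-defined tempered distributions with the stated inverse transforms (this is standard — see \cite{estrada-kanwal,hormander1990} — but the constants $\Gamma(-j+1/2)$, which are finite and nonzero for half-integers, must come out right), and confirm that the interplay between the $s$-prefactor and the fractional powers does not generate a term of the same order $\log^{-j-1/2}$ that was not accounted for. A secondary technical point is checking that $G$ having only a $C^n$ extension (not analytic) still permits the $n$-fold integration by parts with the polynomial growth bounds; this is exactly the situation already handled in Lemma \ref{lemsufcondcesaro}, so no new idea is needed, only care with the endpoint regularity.
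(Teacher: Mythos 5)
Your handling of the singular terms is essentially right: the inverse Laplace transforms of $(s-1\mp i)^{j-1/2}$ are, up to exponential factors, the finite-part distributions $x_+^{-j-1/2}/\Gamma(-j+1/2)$, and the paper's proof likewise subtracts the explicit main term $T(x)$ built from exactly those distributions and studies $R(x)=e^{-x}(N(e^x)-T(x))$ with $\hat R(t)=(1+it)^{-1}(G(1+it)-a)$. Up to that point the two arguments agree in spirit.

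The real gap is in how you propose to turn the Fourier-side bounds into the \emph{pointwise} estimate $R(x)=O(x^{-n/(1+\beta)})$. You push the remainder through the quasiasymptotic machinery of \cite{vindas} and the structural theorem \cite[Thm.~2.42]{p-s-v}, exactly as in Lemma~\ref{lemsufcondcesaro}. But that machinery only delivers Ces\`{a}ro (averaged) behavior; Lemma~\ref{lemsufcondcesaro} explicitly concludes with ``$(\mathrm{C})$''. Your closing claim that the expansion is ``genuinely pointwise because each singular term, being a fixed distribution with a clean power-law profile, has ordinary asymptotics'' confuses the main terms with the error term: the main terms are indeed smooth and their asymptotics are trivial, but the assertion of Theorem~\ref{thremaindertauberian} is a pointwise bound on the \emph{remainder}, and that does not follow from a Ces\`{a}ro-type bound without a Tauberian hypothesis.

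The missing ingredient is the monotonicity of $N$, which you never invoke. The paper's proof uses it as the Tauberian condition: since $N$ is non-decreasing and $|T'(x)|\le Be^x$, one gets the one-sided control $R(y)\ge R(x)/4$ for $y\in[x,\,x+\min\{R(x)/2B,\log(4/3)\}]$ when $R(x)>0$ (and symmetrically when $R(x)<0$). Testing against a non-negative bump $\phi$ of width $\varepsilon$ then gives $R(x)\le (4/\varepsilon)\int_0^\varepsilon R(x+y)\phi(y/\varepsilon)\,\mathrm{d}y$, and after $n$-fold integration by parts on the Fourier side this is $O(x^{-n}\varepsilon^{-\beta})$. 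The exponent $n/(1+\beta)$ now comes from the self-referential choice $\varepsilon=R(x)/2B$, which is legitimate precisely because of the monotonicity inequality; this is not a ``rescaling of a fixed test function'' optimization of the kind that lives comfortably inside the quasiasymptotic calculus, but a genuine Tauberian bootstrap. Without some such step, your route cannot produce a pointwise $O$-term and the proof does not close.
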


\begin{proof} Set 
\[T(x):= ae^{x} + 2e^{x}\sum^{n}_{j=1}\left(r_{j}\cos(\theta_{j})\cos(x) - r_{j}\sin(\theta_{j})\sin(x)\right)\frac{x_{+}^{-j -\frac{1}{2}}}{\Gamma(-j+1/2)}
\] and define $R(x) := e^{-x}(N(e^{x}) - T(x))$. The tempered distributions $x_{+}^{-j-1/2}$ are those defined in \cite[Sect. 2.4]{estrada-kanwal}, i.e., the extension to $[0,\infty)$ of the singular functions $x^{-j-1/2}H(x)$ at $x=0$ via Hadamard finite part regularization. By the classical Wiener-Ikehara theorem we have that $N(x) \sim ax$ and this implies $R(x) = o(1)$. We have to show that $R(x) = O(x^{-n/(1+\beta)})$ as $x\to\infty$. Since $\mathcal{L}\{\cos(x)x^{-j-1/2}_{+};s\} = (\Gamma(-j+1/2)/2)[(s-i)^{j-1/2} + (s+i)^{j-1/2}]$ and $\mathcal{L}\{\sin(x)x^{-j-1/2}_{+};s\} = (\Gamma(-j+1/2)/(2i))[(s-i)^{j-1/2} - (s+i)^{j-1/2}]$, we have
$s \mathcal{L}\{R;s-1\} = G(s) - a$
Letting \mbox{$\Re e$ $s \rightarrow 1^{+}$}, we obtain that $\hat{R}(t) = (1+it)^{-1}(G(1+it) - a)$ in the space $\mathcal{S}'(\mathbb{R})$.

 We now derive a useful relation for $R$. Notice that there exists a $B$ such that $\left|T'(x)\right| \leq Be^{x}$ for $x\geq 1$. Applying the mean value theorem to $T$ and using the fact that $N$ is non-decreasing, we obtain 
\begin{equation*} R(y) \geq \frac{N(e^{x}) - T(x)}{e^{x}} \frac{e^{x}}{e^{y}} - B(y-x) \geq \frac{R(x)}{4} 
\end{equation*}
if $x \leq y \leq x+ \min\{R(x)/2B,\log(4/3)\}$ and $R(x) > 0$. Similarly, we have
\begin{equation*} -R(y) \geq -\frac{R(x)}{2} \quad \quad \mbox{if }R(x) < 0 \mbox{ and }x+\frac{R(x)}{2B} \leq y \leq x\:. 
\end{equation*}

 We now estimate $R$ if $R(x) > 0$. The case $R(x) < 0$ can be treated similarly. We choose an $\varepsilon \leq \min\{R(x)/2B,\log(4/3)\}$ and a test function $\phi \in \mathcal{D}(0,1)$ such that $\phi \geq 0$ and $\int_{-\infty}^{\infty} \phi(y)\mathrm{d}y = 1$. Using the derived inequality for  $R$ and the estimates on the derivatives of $G$, we obtain
\begin{align*}
 R(x) &\leq \frac{4}{\varepsilon} \int^{\varepsilon}_{0} R(y+x) \phi\left(\frac{y}{\varepsilon}\right) \mathrm{d}y \\
& = \frac{2}{\pi} \int^{\infty}_{-\infty} \hat{R}(t) e^{ixt} \hat{\phi}(-\varepsilon t)\: \mathrm{d}t\\
& = \frac{2}{(ix)^{n}\pi} \int^{\infty}_{-\infty} e^{ixt} \left(\hat{R}(t) \hat{\phi}(-\varepsilon t)\right)^{(n)} \mathrm{d}t\\
& =  O(1)x^{-n} \sum_{j=0}^{n}{n \choose j} \int^{\infty}_{-\infty} (1 + \left|t\right|)^{\beta -1 + n - j} \varepsilon^{n-j} |\hat{\phi}^{(n-j)}(-\varepsilon t)|\: \mathrm{d}t\\
&= O(1) x^{-n} \varepsilon^{-\beta}\: ,
\end{align*}
where we have used Parseval's relation in the distributional sense. If we choose\footnote{Since $R(x) = o(1)$, we may assume that $R(x)/2B \leq \log(4/3)$ for $x$ large enough.} $\varepsilon = R(x)/2B$, we get that $R(x) = O(x^{-n/(1+\beta)})$. A similar reasoning gives the result for $R(x) < 0$. This concludes the proof of the theorem.
\end{proof}

We can apply this theorem directly to $N_{C}$. Indeed, employing (\ref{diamondexeq2}) and (\ref{diamondexeq3}), one concludes that
\begin{align}
\label{diamondexeq8} N_{C}(x)&\sim x-\frac{x\sin(\log x)}{\sqrt{\pi}\log^{3/2} x}+ \frac{x}{\log^{5/2}x}\sum^{\infty}_{j=0}c_{j}\frac{\cos(\log x+\vartheta_{j})}{\log^{j}x}\\
\nonumber
&= x-\frac{x\sin(\log x)}{\sqrt{\pi}\log^{3/2} x}+ O\left(\frac{x}{\log^{5/2}x}\right), \quad x\to\infty\: ,
\end{align}
for some constants $c_{j}$ and $\vartheta_{j}$.

To show that $N_{P_{1}}$ has a similar asymptotic series, we need to look at the growth of $\zeta_{P_{1}}$ on $\Re e\:s=1$. This can be achieved with the aid of Lemma \ref{lembegrenzingzeta} and the bounds (\ref{estimatezetaderivatives}). In fact, if we combine those estimates with the formula (\ref{diamondexeq4}), we obtain at once that $\zeta_{P_{1}}^{(n)}(1+it)= O(\log^{n+2}|t| )$ for $|t|>2.$ This and the expansions (\ref{diamondexeq6}) and (\ref{diamondexeq7}) allow us to apply Theorem \ref{thremaindertauberian} and conclude that $N_{P_{1}}(x)$ has an asymptotic series (\ref{diamondexasympexp}) as $x\to\infty$, where the constant $c$ is given by (\ref{diamondexeq5}),
$$
d_{0}=\frac{1}{\sqrt\pi} \exp\left(\int_{1}^{\infty}\frac{\cos(\log x)}{x}\:\mathrm{d}(\Pi_{P_{1}}-\Pi_{C,1})(x)\right)>0
$$ 
and
$$
\theta_{0}=\frac{\pi}{2}-\int_{1}^{\infty}\frac{\sin(\log x)}{x}\:\mathrm{d}(\Pi_{P_{1}}-\Pi_{C,1})(x)\: .
$$
The proof of Theorem \ref{thDiamond} is complete.

We conclude this section with a remark:
\begin{remark}

The asymptotic formula $N_{C,1}(x)=x+O(x/\log ^{3/2}x)$ was first obtained by Beurling \cite{beurling} via the Perron inversion formula and contour integration. The asymptotic expansion (\ref{diamondexeq8}) appears already in Diamond's paper \cite{diamond2}. He refined Beurling's computation and also deduced from (\ref{diamondexeq8}) the first order approximation $N_{P_1}(x)=cx+O(x/\log ^{3/2}x)$ via convolution techniques. On the other hand, the asymptotic formula (\ref{diamondexasympexp}) is new and our proof, in contrast to those of Diamond and Beurling, avoids any use of information about the zeta functions on the region $\Re e \:s<1$. 
 
\end{remark}

\section{Proof of Theorem \ref{propExample}}
\label{SecExample}
In this section we amend the arguments from \cite{s-v} and show that the number system constructed in \cite[Sect. 6]{s-v} does satisfy the requirements from Theorem \ref{propExample}. This generalized prime number system is denoted here by $P^{\ast}$ and is constructed by removing and doubling suitable blocks of ordinary rational primes. Throughout this section we write $\pi=\pi_{P^\ast}$ and $N=N_{P^{\ast}}$, once again to avoid an unnecessary overloading in the notation. For the sake of completeness, some parts of this section overlap with \cite{s-v}. What differs here from \cite[Sect. 6]{s-v} is the crucial \cite[Lemma 6.3]{s-v} and the proof of \cite[Prop. 6.2]{s-v}, which substantially require new technical work.  

For the construction of our set of generalized primes, we begin by selecting a sequence of integers $x_i$, where $x_1$ is chosen so large that for all $x>x_1$ the interval $[x, x+\frac{x}{\log^{1/3} x}]$ contains more
than $\frac{x}{2\log^{4/3} x}$ ordinary rational prime numbers and $x_{i+1}=\lfloor2^{\sqrt[4]{x_i}
}\rfloor$. One has that $i=O(\log\log x_{i})$ and we may thus assume that $i\leq\log^{1/6} x_{i}$. We associate to each $x_i$ four disjoint intervals
$I_{i, 1}, \ldots, I_{i, 4}$. We start with $I_{i, 2}=[x_i,
  x_i+\frac{x_i}{\log^{1/3} x_i}]$ and define $I_{i,3}$ as the contiguous interval starting
at $x_{i}+\frac{x_i}{\log^{1/3} x_i}$ which contains as many (ordinary rational) prime numbers
as $I_{i, 2}$. It is important to notice that each of the intervals $I_{i,2}$ and $I_{i,3}$ has at least $\frac{x_{i}}{2\log ^{4/3}x_{i}}$ ordinary rational prime numbers. Therefore, the length of $I_{i,3}$ is also at most $O(\frac{x_i}{\log^{1/3}x_i})$, in view of the classical PNT. We now choose $I_{i, 1}$ and $I_{i,4}$ in such a
way that they fulfill the properties of following lemma, whose proof was given in \cite{s-v}.

\begin{lemma}
\label{bpexampleclaim}
There are intervals $I_{i, 1}$ and $I_{i, 4}$ such that $I_{i,1}$ has upper bound $x_i$, $I_{i, 4}$ has lower bound
equal to the upper bound of $I_{i, 3}$, and $I_{i, 1}$ and $I_{i, 4}$ contain
the same number of (ordinary rational) primes, and
\[
\prod_{\nu=1}^{i} \prod_{p\in I_{\nu,1}\cup I_{\nu,3}}\left(1-\frac{1}{p}\right)^{(-1)^{\nu+1}}\prod_{p\in I_{\nu,2}\cup I_{\nu,4}}\left(1-\frac{1}{p}\right)^{(-1)^{\nu}}= 1+O\left(\frac{1}{x_i}\right)\: .
\]
In addition, the lengths of $I_{i, 1}$ and $I_{i, 4}$ are $O(\frac{ix_{i}}{\log^{1/3}x_{i}})$ and each of them contains $O(\frac{ix_{i}}{\log^{4/3}x_{i}})$ (ordinary rational) primes.
\end{lemma}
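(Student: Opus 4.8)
The plan is to construct the intervals $I_{i,1}$ and $I_{i,4}$ by a greedy matching procedure, exploiting the fact that we have a lot of freedom in how far down (for $I_{i,1}$) and how far up (for $I_{i,4}$) we are willing to reach, as long as both intervals capture the \emph{same} number of rational primes. First I would fix $i$ and set $I_{i,4}$ to be the contiguous interval whose lower endpoint is the upper endpoint of $I_{i,3}$ and whose length is of order $\frac{ix_i}{\log^{1/3}x_i}$; by the classical PNT such an interval contains $\asymp\frac{ix_i}{\log^{4/3}x_i}$ rational primes, which matches the order of magnitude stated in the lemma. Then I would choose $I_{i,1}$ to be a contiguous interval with upper endpoint $x_i$ and adjust its lower endpoint so that it contains \emph{exactly} as many rational primes as $I_{i,4}$; since adding one more rational prime to $I_{i,1}$ changes the count by $1$, and since the primes near $x_i$ have density $\sim 1/\log x_i$, the required length of $I_{i,1}$ is again $O\!\left(\frac{ix_i}{\log^{1/3}x_i}\right)$, and crucially $I_{i,1}$ still lies well inside $[\tfrac12 x_i, x_i]$, so no logarithmic factors degrade.

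The heart of the matter is the multiplicative condition
\[
\prod_{\nu=1}^{i}\ \prod_{p\in I_{\nu,1}\cup I_{\nu,3}}\!\left(1-\tfrac{1}{p}\right)^{(-1)^{\nu+1}}\ \prod_{p\in I_{\nu,2}\cup I_{\nu,4}}\!\left(1-\tfrac{1}{p}\right)^{(-1)^{\nu}}= 1+O\!\left(\tfrac{1}{x_i}\right).
\]
Here I would argue inductively on $i$. Taking logarithms and using $\log(1-1/p)=-1/p+O(1/p^2)$, the condition becomes an additive statement: the signed sum $\sum_{\nu\le i}(-1)^{\nu+1}\bigl(S(I_{\nu,1})+S(I_{\nu,3})-S(I_{\nu,2})-S(I_{\nu,4})\bigr)$, where $S(I)=\sum_{p\in I}1/p$, must be $O(1/x_i)$ (the error terms $\sum 1/p^2$ over all these intervals converge and are absorbed, but one must check they are $O(1/x_i)$ at level $i$, which holds since all the intervals at level $i$ lie beyond $x_i/2$ and have total length $O(ix_i/\log^{1/3}x_i)$, giving $\sum 1/p^2 = O(i/(x_i\log^{1/3}x_i))$). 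By the rapid growth $x_{i+1}=\lfloor 2^{\sqrt[4]{x_i}}\rfloor$, the contributions from levels $\nu<i$ are already pinned down from the previous step and their cumulative error is $O(1/x_{i-1})$; so at level $i$ I need only choose $I_{i,1}$ (equivalently, its lower endpoint, which I still have freedom to slide since the only hard constraint so far was the prime count matching $I_{i,4}$) to make the level-$i$ signed sum cancel the running total up to $O(1/x_i)$. Because $S(I)$ is a sum of terms each of size $\asymp 1/x_i$ and because sliding the lower endpoint of $I_{i,1}$ by one rational prime changes $S(I_{i,1})$ by $\asymp 1/x_i$ while changing its prime count by exactly $1$, I cannot keep the count fixed and tune $S(I_{i,1})$ continuously; instead I would enlarge \emph{both} $I_{i,1}$ and $I_{i,4}$ simultaneously, keeping their prime counts equal, and observe that each such simultaneous enlargement by one prime on each side changes the signed combination $S(I_{i,1})-S(I_{i,4})$ by an amount of size $\asymp 1/x_i$ whose sign I can control (the two new primes sit at genuinely different locations, one near $x_i$, one near the far end of $I_{i,3}\cup I_{i,4}$, so the increments do not cancel). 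Iterating $O(\log^{1/3}x_i)$ such moves lets me hit the target window of width $O(1/x_i)$, and the total length of $I_{i,1}\cup I_{i,4}$ built this way is $O(ix_i/\log^{1/3}x_i)$ with $O(ix_i/\log^{4/3}x_i)$ primes in each, as required.

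The main obstacle I anticipate is precisely this discreteness issue: the quantity $S(I_{i,1})-S(I_{i,4})$ we wish to steer lives on a lattice of spacing $\asymp 1/x_i$, so we cannot in general force the full product to equal $1$ exactly, only to within $O(1/x_i)$ — which is exactly what the lemma claims, so the target accuracy has been chosen to be compatible with the granularity of the construction. One must be careful that the jumps $S(I_{i,1})-S(I_{i,4})$ genuinely have a controllable sign and do not accidentally all push the same way past the target; this is ensured because at each step we have the choice of which side to extend, and extending $I_{i,1}$ downward versus extending $I_{i,4}$ upward moves the signed sum in opposite directions by comparable amounts. A secondary technical point is bookkeeping the growth of $i$: one needs $i=O(\log\log x_i)$ and the stated $i\le \log^{1/6}x_i$ to guarantee that the factors $i$ appearing in the interval lengths and prime counts are truly lower-order and that the error terms $\sum 1/p^2$ and the induction carry-over $O(1/x_{i-1})$ remain dominated by the target $O(1/x_i)$ at the next stage — this follows from $x_i \to \infty$ fast enough that $1/x_{i-1} = o(1/x_i)$ is \emph{false} but $1/x_{i-1}$ is nevertheless absorbable because it is what we already achieved, and the fresh error we introduce at stage $i$ is the genuine $O(1/x_i)$. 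I would state all of this cleanly by phrasing the induction hypothesis as: after stage $i$, the partial product over $\nu\le i$ equals $1+O(1/x_i)$, and verifying the inductive step amounts to the lattice-steering argument above.
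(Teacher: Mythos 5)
The paper does not give its own proof of this lemma; it cites \cite{s-v}, so there is no in-paper argument to compare against. Judged on its own merits, your proposal has the right overall skeleton --- an inductive construction stage by stage, with a discrete steering of the intervals $I_{i,1}$, $I_{i,4}$ so that the lattice-granularity of the achievable values of the product matches the $O(1/x_i)$ target --- but several of the concrete quantitative claims are wrong, and one of the two steering mechanisms you invoke is not actually available.

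The ``sign control'' mechanism is spurious. You propose to fix the direction of each jump by ``extending $I_{i,1}$ downward versus extending $I_{i,4}$ upward,'' but extending only one of the two intervals by a prime changes its prime count, and the lemma requires $I_{i,1}$ and $I_{i,4}$ to contain the \emph{same} number of primes at all times. The only admissible move that preserves the count constraint (with the endpoints $\sup I_{i,1}=x_i$ and $\inf I_{i,4}=\sup I_{i,3}$ already fixed) is to add one prime to each side simultaneously. This move is \emph{monotone}: if the new primes are $p\in I_{i,1}$ and $q\in I_{i,4}$, then $p<q$ so $S(I_{i,1})-S(I_{i,4})$ always increases by $1/p-1/q=(q-p)/(pq)>0$. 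You cannot control the sign. Fortunately you do not need to: setting $S(I)=\sum_{p\in I}1/p$, the quantity you must hit is
\[
T_i := S(I_{i,2}) - S(I_{i,3}) \mp \log C_{i-1} + O\!\left(\sum_p 1/p^2\right),
\]
where $C_{i-1}$ is the partial product up to level $i-1$. Here $S(I_{i,2})-S(I_{i,3})\asymp 1/\log^{5/3}x_i>0$ (each of $\asymp x_i/\log^{4/3}x_i$ matched prime pairs differs by $\approx |I_{i,2}|\asymp x_i/\log^{1/3}x_i$), while $|\log C_{i-1}|=O(1/x_{i-1})=O(1/\log^4 x_i)$ is negligible, so $T_i>0$. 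Starting from empty intervals and growing monotonically, you cross $T_i$ and overshoot by at most one step --- no sign control is required.

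The quantitative estimates are off. The size of a single step is not $\asymp 1/x_i$: it is $\asymp (q-p)/x_i^2$, and the separation $q-p$ starts at $\approx |I_{i,2}|+|I_{i,3}|\asymp x_i/\log^{1/3}x_i$ and grows as the intervals lengthen. So the step size is $\asymp 1/(x_i\log^{1/3}x_i)$ initially, which is $o(1/x_i)$ --- finer than what you claimed, and what makes the construction succeed. More seriously, $O(\log^{1/3}x_i)$ iterations is far too few: to climb from $0$ to $T_i\asymp 1/\log^{5/3}x_i$ with steps of this size requires $K\asymp x_i/\log^{4/3}x_i$ iterations, a polynomially larger number. (One can check: with $K$ primes on each side, $S(I_{i,1})-S(I_{i,4})\approx KG/x_i^2 + K^2\log x_i/x_i^2$ with $G\approx 2x_i/\log^{1/3}x_i$, and solving $=1/\log^{5/3}x_i$ indeed gives $K\asymp x_i/\log^{4/3}x_i$.) Relatedly, your choice to \emph{start} with $I_{i,4}$ already of length $\asymp ix_i/\log^{1/3}x_i$ --- i.e., at the size the lemma permits as an upper bound --- puts the initial value of $S(I_{i,1})-S(I_{i,4})$ at $\asymp i^2/\log^{5/3}x_i$, overshooting $T_i$ by a factor $\asymp i^2$; you would then need to \emph{shrink}, not grow. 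Starting small and growing is both simpler and correct. Finally, a minor point: $\sum_{p\in I_{i,j}}1/p^2\asymp (\#\text{primes})/x_i^2 = O(i/(x_i\log^{4/3}x_i))$, not $O(i/(x_i\log^{1/3}x_i))$; both are $O(1/x_i)$ given $i\le\log^{1/6}x_i$, but the exponent you wrote is wrong.

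With the monotone steering argument carried out from empty initial intervals, the correct step-size bound, and the correct iteration count, the skeleton of your proof does close: the resulting $K=O(x_i/\log^{4/3}x_i)$ is within the lemma's stated bound $O(ix_i/\log^{4/3}x_i)$, the overshoot is $O(1/(x_i\log^{1/3}x_i))=O(1/x_i)$, and the higher-order corrections are also $O(1/x_i)$, so the induction advances. But as written, the proposal's claims about step size, iteration count, starting configuration, and sign control would not survive scrutiny, and one of them (sign control via one-sided extension) reveals a misunderstanding of the constraint structure.
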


We define $x_k^-$ to be the least integer in $I_{k, 1}$, and $x_k^+$
the largest integer in $I_{k, 4}$. It follows that
$\frac{x_k}{\log^{1/3}x_k}\leq x_k^+-x_k^-=O(\frac{kx_k}{\log^{1/3}x_k})$. Since $k<\log^{1/6}x_{k}$, we therefore have that $x_{k}^{+}<2x_{k}$ and $x_{k}^{-}>2^{-1}x_{k}$, for sufficiently large $k$. We may thus assume that these properties hold for all $k$.

The sequence of generalized primes $P^{\ast}=\left\{p_{\nu}\right\}_{\nu=1}^{\infty}$ is then constructed as follows. We use the term `prime number' for the ordinary rational primes and `prime element' for the elements of $P^{\ast}$. Take one prime
element 
$p$ for each prime number $p$ which is not in any of the intervals
$I_{i, j}\: $. If $i$ is even, take no prime elements in $I_{i, 2}\cup I_{i, 4}$ and two prime elements
$p$ for all prime numbers $p$ which are in one of the intervals $I_{i,
  1}, I_{i, 3}$. If $i$ is odd, no prime elements in $I_{i, 1}\cup I_{i, 3}$ and two prime elements
for all prime numbers $p$ which belong to one of the intervals $I_{i,
  2}, I_{i, 4}$. As previously mentioned, we simplify the notation and write $\pi(x)=\pi_{P^{\ast}}(x)$ and $N(x)=N_{P^{\ast}}(x)$ for the counting functions of $P^{\ast}$ and its associated generalized integer counting function. The rest of the section is dedicated to proving that $N$ and $\pi$ have the properties stated in Proposition \ref{propExample}. We actually show something stronger:

\begin{proposition}
\label{Example}
We have $N(x)=x+\Omega(x/\log^{4/3}x)$; however for an arbitrary $\varepsilon > 0$,
$$N(x)=x+O\left(\frac{x}{\log^{5/3-\varepsilon} x}\right) \ \ \  (\mathrm{C},1)\: , $$
i.e., its first order Ces\`{a}ro-mean $\overline{N}$ has asymptotics
\begin{equation}
\label{bpexeq1}
\overline{N}(x):=\int_{1}^{x}\frac{N(t)}{t}\:\mathrm{d}t=x+O\left(\frac{x}{\log^{5/3- \varepsilon} x}\right)\: .
\end{equation}
For this system, 
$$
\pi(x)=\frac{x}{\log x}+O\left(\frac{x\log\log x}{\log^{4/3} x}\right)\: .
$$
\end{proposition}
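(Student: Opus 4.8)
The plan is to establish the three assertions of Proposition~\ref{Example} by a careful bookkeeping of the multiplicative perturbation introduced by the block construction, reducing everything to estimates on the associated zeta function on and near the line $\Re e\:s=1$ and then invoking the Tauberian machinery. First I would record the factorization $\zeta_{P^{\ast}}(s)=\zeta(s)\,g(s)$, where $\zeta$ is the Riemann zeta function and
$$
g(s)=\prod_{i\ \mathrm{even}}\ \prod_{p\in I_{i,1}\cup I_{i,3}}\Bigl(1-p^{-s}\Bigr)^{-1}\prod_{p\in I_{i,2}\cup I_{i,4}}\Bigl(1-p^{-s}\Bigr)\cdot\prod_{i\ \mathrm{odd}}\ \prod_{p\in I_{i,2}\cup I_{i,4}}\Bigl(1-p^{-s}\Bigr)^{-1}\prod_{p\in I_{i,1}\cup I_{i,3}}\Bigl(1-p^{-s}\Bigr),
$$
the finite-product correction coming from doubling/removing prime elements in the intervals. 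The content of Lemma~\ref{bpexampleclaim} is precisely that $g(1)=1+O(1/x_i)$ when the product is truncated at the $i$-th block, so $\log g(s)=\sum_{i}\delta_i(s)$ with each $\delta_i$ small at $s=1$; the point will be to estimate $\delta_i(1+it)$ and its derivatives, exploiting that the intervals $I_{i,j}$ are short (length $O(i x_i/\log^{1/3}x_i)$) and sparse (next block near $2^{\sqrt[4]{x_i}}$), so that the sum $\sum_i$ is dominated by its largest relevant term and, crucially, that for $|t|$ in the range governed by $x_i$ the oscillation $p^{-it}=e^{-it\log p}$ over an interval of logarithmic length $O(i/\log^{1/3}x_i)$ is nearly constant, producing cancellation of size essentially $|I_{i,j}|\cdot$(density of primes)$\cdot\min(1,|t|\cdot\text{(log-length)})/x_i$.

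Next I would treat the three claims in turn. For the $\Omega$-result, the mechanism is the one already present in \cite{s-v}: the block structure forces $N(x)$ to oscillate around $x$ with amplitude at least the number of primes removed or doubled in a block, namely $\gg x_i/\log^{4/3}x_i$, localized near $x\approx x_i$; concretely I would show that $N(x_i^+)-N(x_i^-)-\bigl(x_i^+-x_i^-\bigr)$ (or an analogous localized difference) is $\gg x_i/\log^{4/3}x_i$ in absolute value with a sign depending on the parity of $i$, which gives $N(x)=x+\Omega(x/\log^{4/3}x)$. For the PNT with remainder, since $\pi_{P^{\ast}}$ differs from $\pi$ only by doubling/deleting the primes in the blocks, $\pi_{P^{\ast}}(x)-\pi(x)$ is a sum over the finitely many blocks with $x_i\le x$ of $\pm(\text{number of primes in the relevant half of the block})$, each term $O(i x_i/\log^{4/3}x_i)=O(x\log\log x/\log^{4/3}x)$ by Lemma~\ref{bpexampleclaim} and $i=O(\log\log x_i)$; combined with the classical PNT $\pi(x)=x/\log x+O(x e^{-c\sqrt{\log x}})$ this yields $\pi(x)=x/\log x+O(x\log\log x/\log^{4/3}x)$, which in particular gives the $O(x/\log^{4/3-\varepsilon}x)$ of Theorem~\ref{propExample}.

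For the Ces\`aro estimate \eqref{bpexeq1}, which is the heart of the matter, I would apply a Tauberian theorem in the spirit of Lemma~\ref{lemsufcondcesaro} (or Theorem~\ref{thremaindertauberian} with the oscillatory terms absent): writing $N(x)=x+xR(\log x)$ with $R\in\mathcal S'(\R)$, the first Ces\`aro mean $\overline N(x)-x$ corresponds to one integration, so it suffices to show that $F(s):=\zeta_{P^\ast}(s)-1/(s-1)$ extends continuously to $\Re e\:s=1$ with $F(1+it)=O(|t|^{\beta})$ for the relevant $\beta$, which then yields a remainder $O(x/\log^{1/(1+\beta)}x)$ in the $(\mathrm C,1)$ sense; to reach the exponent $5/3-\varepsilon$ one takes $\beta$ slightly above $1/2$. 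The required bound $\log g(1+it)=O(|t|^{1/2+\varepsilon})$, or more precisely a bound whose contribution after the Tauberian step is $O(x/\log^{5/3-\varepsilon}x)$, must come from the short-interval cancellation: for each block the factor contributes, after taking logarithms and using $\log(1-p^{-1-it})=-p^{-1-it}+O(p^{-2})$, a sum $\sum_{p\in I_{i,j}}p^{-1-it}$ which by partial summation against the PNT and the shortness of $I_{i,j}$ is $O\bigl(\min(1,|t|\,i/\log^{1/3}x_i)\cdot\log^{-1/3}x_i\bigr)$, and then the sum over the blocks with $x_i$ tied to $|t|$ (so $\log^{1/3}x_i\asymp|t|^{\text{something}}$ via $x_{i+1}\approx 2^{\sqrt[4]{x_i}}$) is geometrically dominated, giving the claimed power of $|t|$.

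\textbf{Main obstacle.} The delicate point, and the one where \cite{s-v} went wrong, is the simultaneous bookkeeping in the Ces\`aro estimate: one must extract genuine cancellation from $\sum_{p\in I_{i,j}}p^{-1-it}$ uniformly in $t$ across \emph{all} blocks at once, balancing the regime $|t|\log^{-1/3}x_i\lesssim 1$ (where the exponential is nearly constant and one uses the cancellation $g(1)=1+O(1/x_i)$ from Lemma~\ref{bpexampleclaim} together with a first-order Taylor estimate of $p^{-it}-1$) against the regime $|t|\log^{-1/3}x_i\gtrsim 1$ (where one uses the oscillation of $e^{-it\log p}$ over the short interval directly), and then summing the resulting bounds over $i$ so that the worst $i\asymp$ the one matched to $|t|$ controls the total without the factor $i=O(\log\log x_i)$ destroying the gain. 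Getting the exponent exactly down to $5/3-\varepsilon$ (equivalently, $\beta$ just above $1/2$) rather than something weaker is what forces the careful quantitative version of Lemma~\ref{bpexampleclaim} and the $\varepsilon$ in the statement.
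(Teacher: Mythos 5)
Your sketch for the $\pi$-estimate is correct and matches the paper's (a one-line consequence of the construction and the classical PNT). Your sketch for the $\Omega$-estimate is in the right spirit but incomplete as stated: showing that the block perturbation $N(x_{k+1}^+)-N(x_{k+1}^-)$ deviates from $x_{k+1}^+-x_{k+1}^-$ by $\gg x_{k+1}/\log^{4/3}x_{k+1}$ is not enough; one also needs a good \emph{ordinary} baseline estimate $N_k(x)=x+O(x/\log^{5/3}x)$ valid in a range $[x_{k+1}^-,x_{k+1}^+]\subset[\exp(\log^\eta x_k),\exp(x_k^{3/5}))$ to ensure that the perturbation actually produces a genuine deviation of $N(x)-x$ rather than cancelling a pre-existing error. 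The paper supplies precisely this via Lemma~\ref{lemmaexample}, which you never invoke.

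For the Ces\`aro estimate \eqref{bpexeq1}, which you correctly identify as the heart of the matter, your proposed route is genuinely different from the paper's, and it has a real gap. The paper's proof is entirely elementary: it proves Lemma~\ref{lemmaexample} by the M\"obius-type identity $g(n)=\sum_{d\mid n}\mu(n/d)f(d)$ together with the Hildebrand--Tenenbaum smooth-number bound \eqref{eqestimatepsi}, obtaining direct pointwise and Ces\`aro bounds on $N_k$, and then patches ranges $[x_k^-,\exp(\log^\eta x_k))$ and $[\exp(\log^\eta x_k),x_{k+1}^-)$ by integrating the explicit local perturbation $N_k-N_{k-1}$. No zeta function appears. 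You instead propose to factor $\zeta_{P^\ast}(s)=\zeta(s)g(s)$, bound $\log g(1+it)$ and its derivatives via the short-interval cancellations, and feed the result into a Tauberian remainder theorem. You candidly label the required uniform-in-$t$ cancellation estimate as the ``main obstacle'' and do not carry it out; that \emph{is} the missing proof, and moreover the Tauberian tools in the paper are not readily adapted to this purpose: Lemma~\ref{lemsufcondcesaro} yields only integer Ces\`aro exponents $\log^n x$ (so cannot produce $5/3-\varepsilon$ without a new structural argument), and Theorem~\ref{thremaindertauberian} produces ordinary, not $(\mathrm C,1)$, remainders, so it would need to be applied to $\overline N$ with Mellin transform $\zeta_{P^\ast}(s)/s$ and with suitably strong $C^n$-bounds on that function on $\Re e\,s=1$ — none of which is established. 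In short: your outline identifies the correct objects and the correct delicacy, but leaves the crucial cancellation and its translation into a quantitative $(\mathrm C,1)$ estimate unresolved, whereas the paper sidesteps the complex-analytic difficulties entirely by a direct elementary computation.
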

The asymptotic bound for the prime counting function $\pi$ of our generalized prime set $P^{\ast}$
follows immediately from the definition of $P^{\ast}$ and the
classical prime number theorem. The non-trivial part in the proof of Proposition \ref{Example} is to establish the asymptotic formulas for $N$ and $\overline{N}$. 

To achieve further progress, we introduce a family of generalized prime number systems approximating $P^{\ast}$. We define the generalized prime set $P^{\ast}_{k}$ by means of the same construction used for $P^{\ast}$, but only  taking the intervals $I_{i, j}$
with $i\leq k$ into account; furthermore, we write $N_k(x)=N_{P_{k}^{\ast}}(x)$.

We first try to control the growth $N_k(x)$ on suitable large intervals. For this we will use a result from the theory of integers without large prime factors \cite{hildebrand-tenenbaum}. This theory studies the function 
\[ \Psi(x,y) = \#\{1 \leq n \leq x: P(n) \leq y\} \:,
\]
where $P(n)$ denotes the largest prime factor of $n$ with the convention $P(1)=1$. This function is well studied  \cite{hildebrand-tenenbaum} and we will only use  the simple estimate \cite[Eqn. (1.4)]{hildebrand-tenenbaum}:
\begin{equation}
\label{eqestimatepsi}
\Psi(x,y) \ll xe^{-\log x/2\log y}\log y\: .
\end{equation}

A weaker version of the following lemma was stated in \cite{s-v}, but the proof given there contains a mistake. Furthermore, the range of validity for the estimates in \cite[Lemma 6.3]{s-v} appears to be too weak to lead to a proof of the Ces\`{a}ro estimate (\ref{bpexeq1}). We correct the error in the proof and show the assertions in a broader range.
\begin{lemma}
\label{lemmaexample} Let $\eta > 1$.
If $\exp(\log^{\eta} x_{k})\leq x<\exp(x_k^{3/5})$, then we have
\begin{equation}
\label{bpexeq2}
N_k(x)=x+O\left(\frac{x}{\log^{5/3} x}\right)
\end{equation}
and 
\begin{equation}
\label{bpexeq3}
\overline{N}_{k}(x):=\int_{1}^{x}\frac{N_{k}(t)}{t}\:\mathrm{d}t= x+O\left(\frac{x}{\log^{5/3} x}\right)\: ,
\end{equation}
for all sufficiently large $k$.
\end{lemma}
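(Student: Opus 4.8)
The strategy is to compare $P_k^\ast$ with the ordinary rational primes via the zeta function factorization, and to exploit that the ``defect'' from the rational primes is supported on the finitely many intervals $I_{i,j}$ with $i\le k$. Writing $\zeta$ for the Riemann zeta function of the ordinary integers and $\zeta_k$ for the zeta function of $P^\ast_k$, we have
\[
\zeta_k(s)=\zeta(s)\,Q_k(s)\:,\qquad Q_k(s):=\prod_{\nu=1}^{k}\prod_{p\in I_{\nu,1}\cup I_{\nu,3}}\Bigl(1-p^{-s}\Bigr)^{(-1)^{\nu+1}}\prod_{p\in I_{\nu,2}\cup I_{\nu,4}}\Bigl(1-p^{-s}\Bigr)^{(-1)^{\nu}}\:,
\]
where $Q_k$ is a finite product of factors $(1-p^{-s})^{\pm1}$ with all $p\le x_k^+<2x_k$, hence entire and zero-free in a neighbourhood of the whole line $\Re e\:s=1$; moreover Lemma \ref{bpexampleclaim} gives $Q_k(1)=1+O(1/x_k)$. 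The first step is therefore to convert $N_k$ into a contour integral. Since $N_k$ is a finite multiplicative convolution of $\zeta$ with the measure whose transform is $Q_k$, one writes
\[
N_k(x)=\sum_{m\le x^+_k/?}\frac{(\text{coeff})}{\cdots}\;\Bigl\lfloor \tfrac{x}{m}\Bigr\rfloor\: ,
\]
more precisely $dN_k=dN_{\mathbb N}\ast d\mu_k$ where $d\mu_k$ is a signed measure supported on generalized integers built only from primes in $\bigcup_{i\le k}\bigcup_j I_{i,j}$, with total variation $\|\mu_k\|$ controlled because each interval contains only $O(ix_i/\log^{4/3}x_i)$ primes. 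The plan is then to split $N_k(x)=\sum_{d}\mu_k(\{d\})\,\lfloor x/d\rfloor$, replace $\lfloor x/d\rfloor$ by $x/d+O(1)$, and use $\sum_d\mu_k(\{d\})/d=Q_k(1)=1+O(1/x_k)$ together with the condition $x\ge \exp(\log^\eta x_k)$ to show the error terms $O(\|\mu_k\|)$ and $O(x/x_k)$ are both $o(x/\log^{5/3}x)$. The role of the upper restriction $x<\exp(x_k^{3/5})$ is to bound the number of surviving generalized integers $d$ of the relevant size: integers built only from primes $\le x_k^+\le 2x_k$ that are $\le x$ are counted (up to the weights $2$) by $\Psi(x,2x_k)$, and the Hildebrand–Tenenbaum estimate \eqref{eqestimatepsi} gives $\Psi(x,2x_k)\ll x\exp(-\log x/(2\log(2x_k)))\log x_k$, which for $x<\exp(x_k^{3/5})$ is $\ll x\exp(-c\,x_k^{2/5})$, far smaller than any power of $\log x$; this is what kills the contribution of the ``long'' generalized integers. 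For the lower range $x\ge\exp(\log^\eta x_k)$ one similarly checks $x/x_k\ll x/\exp(\log^{\eta}x_k)^{1/?}$ beats $x/\log^{5/3}x$, and since $\log x\le x_k^{3/5}$ we also have $\log^{5/3}x\le x_k$, so the comparison is consistent.

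For the Ces\`aro-averaged estimate \eqref{bpexeq3} the plan is the same but cleaner: one has $\overline N_k(x)=\int_1^x N_k(t)/t\,dt$, and integrating the identity $N_k=N_{\mathbb N}\ast\mu_k$ against $dt/t$ turns the floor-function error $O(1)$ into $O(\log x)$ per generalized integer $d$, giving a total error $O(\|\mu_k\|\log x)$ from the short part; using $\overline N_{\mathbb N}(x)=x+O(x\exp(-c\sqrt{\log x}))$ (the classical PNT with error term, valid since all $d$ involved are $<2x_k$ and $\log x\gg\log^\eta x_k$) and the estimate on $\Psi$ for the long part finishes it. Here it helps that $\|\mu_k\|$ grows at most like a power of $x_k$, while $x$ exceeds $\exp(\log^\eta x_k)$, so $\|\mu_k\|\log x=o(x/\log^{5/3}x)$ for all sufficiently large $k$; one should state explicitly, as the lemma does, that the conclusion holds ``for all sufficiently large $k$'' precisely to absorb these implicit dependences on $k$.

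The main obstacle I expect is the \emph{uniformity in $k$ of the error terms} together with the correct bookkeeping of $\|\mu_k\|$: because $Q_k$ is a product over $i\le k$ of blocks each contributing $O(ix_i/\log^{4/3}x_i)$ primes with signs, the signed measure $\mu_k$ could a priori have total variation as large as a product, and one must check that, on the range $\exp(\log^\eta x_k)\le x\le\exp(x_k^{3/5})$, only those $d$ divisible by primes from the last block $I_{k,j}$ (of size $x_k$) actually interfere at the scale $x/\log^{5/3}x$, while the earlier blocks are negligible because $x_i$ is astronomically smaller than $x_k$ (recall $x_{i+1}=\lfloor 2^{\sqrt[4]{x_i}}\rfloor$). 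In other words, the key point is that $\log x\ge \log^\eta x_k$ forces $x$ to be so large that integers $d\le x^{1/2}$, say, built only from the at most $O(k x_k/\log^{4/3}x_k)$ surviving primes are too few to matter, and this is exactly where \eqref{eqestimatepsi} is invoked; making this quantitative and $k$-uniform (choosing the implicit threshold ``sufficiently large $k$'' to depend on $\eta$) is the delicate part of the argument. Everything else reduces to standard estimates: the classical PNT with de la Vall\'ee Poussin error term, partial summation, and the trivial bound $\lfloor x/d\rfloor=x/d+O(1)$.
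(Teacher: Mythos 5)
Your plan does follow the same overall route as the paper's proof: $N_k$ is decomposed via a Möbius-type multiplicative function supported on the set $\mathcal{H}_k$ of integers with all prime factors in $\bigcup_{i\le k,j}I_{i,j}$, the floor function $\lfloor x/m\rfloor$ is expanded, the constant term $\sum g(m)/m=Q_k(1)$ is evaluated via the Euler product and Lemma \ref{bpexampleclaim}, and the two error terms (floor-function error and tail) are bounded with the Hildebrand--Tenenbaum estimate for $\Psi(x,2x_k)$. However, there is a genuine confusion in your account of which endpoint of the range $\exp(\log^\eta x_k)\le x<\exp(x_k^{3/5})$ is responsible for which error term, and this would derail a careful write-up.

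You assert that the \emph{upper} restriction $x<\exp(x_k^{3/5})$ is what makes $\Psi(x,2x_k)$ small, claiming $\Psi(x,2x_k)\ll x\exp(-cx_k^{2/5})$. That claim is only true near the very top of the range: the exponent $-\log x/(2\log(2x_k))$ is in your favour when $\log x$ is \emph{large}, not small. Near the bottom endpoint $x=\exp(\log^\eta x_k)$ one only gets $\Psi(x,2x_k)/x\ll \exp(-\tfrac12\log^{\eta-1}x_k)\log x_k$, which is not super-polynomially small in $\log x$ but (because $\eta>1$) still beats $\log^{-5/3}x\,(\log x_k)^{-1}$ for $k$ large. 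In other words, it is the \emph{lower} constraint $x\ge\exp(\log^\eta x_k)$, together with $\eta>1$, that makes the $\Psi$ estimate win; the \emph{upper} constraint $x<\exp(x_k^{3/5})$ is used for the unrelated error term $O(x/x_k)$ coming from $Q_k(1)=1+O(1/x_k)$, since it guarantees $\log^{5/3}x\le x_k$ and hence $x/x_k\le x/\log^{5/3}x$. Your worry about the total variation $\|\mu_k\|$ growing like a product is a red herring: the multiplicative function $g$ satisfies $|g(p^\alpha)|\le1$, hence $|g(m)|\le1$ for all $m$, so the floor-function error is simply bounded by the cardinality $|\mathcal{H}_k\cap[1,x]|\le\Psi(x,2x_k)$. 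Finally, for the Ces\`aro estimate the paper does not invoke any classical PNT error term; $\int_1^x\lfloor t/m\rfloor\,\mathrm{d}t/t$ is handled elementarily, and the same $\Psi$ bounds (integrated once) carry the argument --- so the appeal to a de la Vall\'ee Poussin-type remainder is unnecessary and out of place.
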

\begin{proof}
Let $f(n)$ be the number of representations of $n$ as finite products of elements of $P^{\ast}_{k}$. Note that $N_{k}(x)=\sum_{n\leq x}f(n)$. Setting $f(1)=1$,
the function $f(n)$ becomes multiplicative and we have
\[
f(p^\alpha) = \begin{cases} \alpha+1\ , & \mbox{if } \exists 2i\leq k: p\in
  I_{2i,1}\cup I_{2i,3}\: ,\\
0\ , & \mbox{if }\exists 2i\leq k: p\in I_{2i,2}\cup I_{2i,4}\: ,\\
0\ , & \mbox{if }\exists 2i+1\leq k: p\in I_{2i+1,1}\cup I_{2i+1,3}\: ,\\
\alpha+1\ , & \mbox{if } \exists 2i+1\leq k: p\in
  I_{2i+1,2}\cup I_{2i+1,4}\: ,\\

1\ , & \mbox{otherwise}.
\end{cases}
\]
We also introduce the multiplicative function $g(n)=\sum_{d|n}\mu(n/d)f(d)$. The values of $g$ at powers of prime numbers are easily seen to be
\[
g(p^\alpha) = 
\begin{cases}
1\ , &\mbox{if } f(p)=2\: ,\\
-1\ ,& \mbox{if } f(p)=0\mbox{ and } \alpha=1\: ,\\
0\ , & \mbox{otherwise}.
\end{cases}
\]

Denote by $\mathcal{H}_{k}$ the set of all integers which have
only prime divisors in $\bigcup_{i\leq k} I_{i, j}$, and for each
integer $n$, let $n_{\mathcal{H}_{k}}$ be the largest divisor of $n$
belonging to $\mathcal{H}_{k}$. We have 
\begin{align*}
N_k(x) &= \sum_{m\in\mathcal{H}_{k}} \underset{n_{\mathcal{H}_{k}}=m}
{\sum_{n\leq x}} f(m)
=
\sum_{m\in\mathcal{H}_{k}} \underset{m|n}
{\sum_{n\leq x}} g(m)
\\
&=
\sum_{m\in\mathcal{H}_{k}} g(m) \left[\frac{x}{m}\right]
 \\
 &
 =
  x\sum_{m\in\mathcal{H}_{k}}\frac{g(m)}{m} - x \underset{d > x} {\sum_{m\in\mathcal{H}_{k}}}\frac{g(m)}{m} + 
O\left(|\mathcal{H}_{k}\cap[1, x]|\right),
\end{align*}
and, since 
\begin{equation*}
\sum_{m\in\mathcal{H}_{k}}\frac{g(m)}{m} = \prod_{i=1}^{k} \prod_{p\in I_{i,1}\cup I_{i,3}}\left(1-\frac{1}{p}\right)^{(-1)^{i+1}}\prod_{p\in I_{i,2}\cup I_{i,4}}\left(1-\frac{1}{p}\right)^{(-1)^{i}}\: ,
\end{equation*}
we thus obtain
\begin{equation*}
N_k(x) 
  = x + O\left(\frac{x}{x_k}\right)+ O\left(|\mathcal{H}_{k}\cap[1, x]|\right) - x \underset{d > x} {\sum_{m\in\mathcal{H}_{k}}}\frac{g(m)}{m} \: .
\end{equation*}
The first error term is negligible because $x<\exp(x_k^{3/5})$. For the estimation of the remaining two terms we use the function $\Psi$. Any element of $\mathcal{H}_{k}$ has only prime divisors below $2x_{k}$. Using this observation and employing the estimate (\ref{eqestimatepsi}), we find that
\begin{align*}
\left|\mathcal{H}_{k}\cap [1,x]\right|& \ll x^{1-\frac{1}{2\log (2x_{k})}}\log x_{k}
\\
&\ll \frac{x}{\log^{5/3}x} \left(\frac{(x_{k}\log x_{k})^{2\log (2x_{k})}}{x} \right)^{\frac{1}{2\log (2x_{k})}}
\\
&
\ll
 \frac{x}{\log^{5/3}x}\: ,  \ \ \ \mbox{for } \exp(8\log^{2} x_{k})\leq x<\exp(x_k^{3/5})\: .
\end{align*}
Similarly, we can extend the bound to the broader region,
\begin{align*}
\left|\mathcal{H}_{k}\cap [1,x]\right|& \ll x^{1-\frac{1}{2\log (2x_{k})}}\log x_{k}
\\
&\ll \frac{x}{\log^{5/3}x} \left(\frac{(\log^{13/3} x_{k})^{2\log (2x_{k})}}{x} \right)^{\frac{1}{2\log (2x_{k})}}
\\
&
\ll
 \frac{x}{\log^{5/3}x}\: ,  \ \ \ \mbox{for } \exp(\log^{\eta} x_{k})\leq x<\exp(8 \log^{2} x_k)\: ,
\end{align*}
which is valid for all sufficiently large $k$. For the other term,
\begin{align*}
	\left| \sum_{\substack{d \in \Hk \\ d > x }} \frac{g(d)}{d}\right| & \leq  \sum_{\substack{P(d) \leq 2x_{k}  \\ d > x }} \frac{1}{d}= \int^{\infty}_{x^{-}} \frac{1}{t}\: \mathrm{d}\Psi(t,2x_{k})\\
	& = \lim_{t \rightarrow \infty}\frac{\Psi(t,2x_{k})}{t} - \frac{\Psi(x,2x_{k})}{x} + \int^{\infty}_{x} \frac{\Psi(t,2x_{k})}{t^{2}} \:\mathrm{d}t\:.
\end{align*}
The limit term equals $0$ because it is $O(t^{-1/2\log( 2x_{k})} \log 2x_{k})$ by (\ref{eqestimatepsi}). The second term is negligible because it is a negative term in a positive result. It remains to bound the integral:
\begin{align*}
	\int^{\infty}_{x} \frac{\Psi(t,2x_{k})}{t^{2}}\: \mathrm{d}t & \ll\int^{\infty}_{x} t^{-1-\frac{1}{2\log (2x_{k})}} \log 2x_{k} \mathrm{d}t\\
	& \ll x^{-\frac{1}{2\log(2x_{k})}}\log^{2} x_{k}\\
	& \ll\frac{1}{\log^{5/3} x} \quad \text{because }\exp( \log^{\eta} x_{k}) \leq x \leq \exp(x_{k}^{3/5})\: ,
\end{align*}
where the last inequality is deduced in the same way as above. This concludes the proof of (\ref{bpexeq2}).
We now address the Ces\`{a}ro estimate. Using the estimates already found for $N_{k}(x)$, we find 
\[
\overline{N}_{k}(x)-x
=O\left(\frac{x}{\log^{5/3}x}\right)+O\left(\int_{1}^{x}\frac{\left|\mathcal{H}_{k}\cap[1,t]\right|}{t}\:\mathrm{d}t\right) +  O\left(  \int^{x}_{1}\int^{\infty}_{t} \frac{\Psi(s,2x_{k})}{s^{2}}\: \mathrm{d}s\mathrm{d}t \right)\: .
\]
We bound the double integral in the given range. The other term can be treated similarly. We obtain
\begin{align*}
	\int^{x}_{1}\int^{\infty}_{t} \frac{\Psi(s,2x_{k})}{s^{2}}\: \mathrm{d}s\mathrm{d}t  & \ll \int^{x}_{1} \int^{\infty}_{t} s^{-1-\frac{1}{2 \log (2x_{k})}} \log (2x_{k})\: \mathrm{d}s\mathrm{d}t\\
	& = \int^{x}_{1} t^{-\frac{1}{2 \log (2x_{k})}} 2\log^{2} (2x_{k})\mathrm{d}t\\
	& \ll x^{1-\frac{1}{2 \log (2x_{k})}}\log^{3}(2x_{k})= O\left(\frac{x}{\log^{5/3}x}\right)\: ,
\end{align*}
where again the last step is shown by considering the regions $\exp(8\log^{2} x_{k})\leq x<\exp(x_k^{3/5})$ and $\exp(\log^{\eta} x_{k})\leq x<\exp(8 \log^{2} x_k)$ separately.
\end{proof}
 
We end the article with the proof of Proposition \ref{Example}.
\begin{proof}[Proof of Proposition \ref{Example}] We choose $\eta$ smaller than $\frac{5/3}{5/3 - \varepsilon}$ in Lemma \ref{lemmaexample}. The $\Omega$-estimate for $N(x)$ follows almost immediately from (\ref{bpexeq2}). For
$x<x_{k+1}^{+}$, we have $N(x)=N_k(x)$ with the exception of the missing
and doubled primes from $[x_{k+1}^{-},x_{k+1}^{+}]$. Observe that, because $x_{k+1}=\lfloor\exp(x_{k}^{1/4}\log 2)\rfloor$,
$$[x_{k+1}^{-},x_{k+1}^{+}]\subset[\exp\left(\log^{\eta} x_k\right),\exp(x_k^{3/5}))\: .$$
Since we changed more than $\frac{x}{4\log^{4/3} x}$ primes when $x$ is  the upper bound of either the interval $I_{k+1,1}$ or $I_{k+1,2}$,
we obtain from Lemma \ref{lemmaexample} that $|N(x)-x|$ becomes as large as 
$\frac{x}{8\log^{4/3 }x}$ infinitely often as $x\to\infty$.

It remains to show (\ref{bpexeq1}). We bound the Ces\`{a}ro means of $N$ in the range $x_k^-\leq
x<x_{k+1}^{-}.$
We start by observing that $N(x)=N_{k}(x)$ within this range, so (\ref{bpexeq3}) gives (\ref{bpexeq1}) for $\exp(\log^{\eta} x_{k})\leq x<x_{k+1}^{-}$. Assume now that
$$x_k^-\leq x<\exp(\log^{\eta} x_{k})\: .$$ 
Lemma \ref{lemmaexample} implies that
$$
\overline{N}_{k-1}(x)=\int_{1}^{x}\frac{N_{k-1}(t)}{t}\:\mathrm{d}t=x+O\left(\frac{x}{\log^{5/3}x}\right)\: ,
$$
because, by construction of the sequence, the interval $[x_{k}^{-},\exp(\log^{\eta} x_{k})]$ is contained in 
$[\exp(\log^{\eta} x_{k-1}),\exp(x_{k-1}^{3/5})]$. Therefore, it suffices to prove that
\begin{equation}
\label{bpexeq4}
\overline {N}_{k}(x)-\overline{N}_{k-1}(x)=\int_{x_{k}^{-}}^{x}\frac{N_{k}(t)-N_{k-1}(t)}{t}\:\mathrm{d}t
\end{equation}
has growth order $O(\frac{x}{\log^{5/3 - \varepsilon}x})$ in the interval $[x_{k}^{-},\exp(\log^{\eta} x_{k})]$. Note that only the intervals $\nu\cdot (I_{k,1}
\cup\ldots \cup I_{k, 4})$ contribute to the integral (\ref{bpexeq4}) with $\nu$ a generalized integer from the number system generated by $P^{\ast}_{k}$. Only the generalized integers $ \nu \leq x/x_{k}^{-}$ deliver a contribution. There are at most $O(x/x_{k}^{-}) = O(x/x_{k})$ such integers. The contribution of one such a generalized integer is then
\begin{align*}
	O\left(\frac{kx_{k}}{\log^{4/3}x_{k}}\right) \int^{\nu x_{k}^{+}}_{\nu x_{k}^{-}} \frac{\mathrm{d}t}{t} &= O\left(\frac{k^{2}x_{k}}{\log^{5/3}x_{k}}\right),
\end{align*}
where we have used the fact that the length of the intervals $I_{k,i}$ are $O(kx_{k}\log^{-1/3}x_{k})$ as derived in Lemma \ref{bpexampleclaim}. In total the integral is bounded by
\[ O\left(\frac{x}{x_{k}}\right)O\left(\frac{k^{2}x_{k}}{\log^{5/3}x_{k}}\right) = O\left(\frac{k^{2} x}{\log^{5/(3\eta)}x}\right) = O\left( \frac{x}{\log^{5/3 - \varepsilon}x}\right)\: .
\]
\end{proof}


\begin{thebibliography}{99}  

\bibitem{bateman-diamond} P.~T.~Bateman, H.~G.~Diamond, \textit{Asymptotic distribution of Beurling's generalized prime numbers,} in: Studies in Number Theory, pp. 152--210. Math. Assoc. Amer., Prentice-Hall, Englewood Cliffs, N.J., 1969.  
                                                                                           
\bibitem{beurling} A.~Beurling, \textit{Analyse de la loi asymptotique de la distribution des nombres premiers g\'{e}n\'{e}ralis\'{e}s,} Acta Math. \textbf{68} (1937), 255--291.


\bibitem{diamond1969} H.~G.~ Diamond, \emph{The prime number theorem for Beurling's generalized numbers,} J. Number Theory \textbf{1} (1969), 200--207. 

\bibitem{diamond1} H.~G.~Diamond, \emph{Asymptotic distribution of Beurling's generalized integers,} Illinois J. Math. \textbf{14} (1970), 12--28.

\bibitem{diamond2} H.~G.~Diamond, \emph{A set of generalized numbers showing Beurling's theorem to be sharp,} Illinois J. Math. \textbf{14} (1970), 29--34.

\bibitem{diamond2011} H.~.G.~Diamond, \emph{Two results on Beurling generalized numbers,} Publ. Math. Debrecen \textbf{79} (2011), 401--409.

\bibitem {estrada-kanwal}R.~Estrada, R.~P.~Kanwal, \textit{A distributional
approach to asymptotics. Theory and applications,} Second edition,
Birkh\"{a}user, Boston, 2002.

\bibitem{hildebrand-tenenbaum} A.~Hildebrand, G.~Tenenbaum, \textit{Integers without large prime factors,} J. Th\'{e}or. Nombres Bordeaux \textbf{5}(2) (1993), 411--484.

\bibitem{hormander1990}  L. H\"{o}rmander, \emph{The analysis of linear partial differential operators. I. Distribution theory and Fourier analysis,} Second edition, Grundlehren der Mathematischen Wissenschaften, 256, Springer-Verlag, Berlin, 1990.

\bibitem{hormander1997} L. H\"{o}rmander, \emph{Lectures on nonlinear hyperbolic differential equations,} Math\'{e}matiques \& Applications, 26, Springer-Verlag, Berlin, 1997.

\bibitem{kahane1} J.-P.~Kahane, \textit{Sur les nombres premiers g\'{e}n\'{e}ralis\'{e}s de Beurling. Preuve d'une conjecture de Bateman et Diamond,} J. Th\'{e}or. Nombres Bordeaux \textbf{9} (1997), 251--266.

\bibitem{korevaar} J.~Korevaar, \textit{Tauberian theory. A century of developments}, Grundlehren der Mathematischen Wissenschaften, 329, Springer-Verlag, Berlin, 2004.

\bibitem{p-s-v} S.~Pilipovi\'{c}, B.~Stankovi\'{c}, J. Vindas, \textit{Asymptotic behavior of generalized functions,} Series on Analysis, Applications and Computation, 5., World Scientific Publishing Co. Pte. Ltd., Hackensack, NJ, 2012.

\bibitem{s-v} J.-C.~Schlage-Puchta, J.~Vindas, \textit{The prime number theorem for Beurling's generalized numbers. New cases}, Acta Arith. \textbf{153} (2012), 299--324.

\bibitem{vindas}J.~Vindas, \textit{Structural theorems for quasiasymptotics of distributions at infinity,} Publ. Inst. Math. (Beograd) (N.S.) \textbf{84}(98) (2008), 159--174. 


\bibitem{vindas3} J.~Vindas, \textit{The structure of quasiasymptotics of Schwartz distributions,} in: Linear and non-linear theory of generalized functions and its applications, pp. 297--314, Banach
Center Publ. \textbf{88}, Polish Acad. Sc. Inst. Math., Warsaw, 2010.


\bibitem{vladimirov} V.~S.~Vladimirov, \textit{Methods of the theory of generalized functions,} Analytical Methods and Special Functions, 6, Taylor \& Francis, London, 2002.

\bibitem{zhang2014WI} W.-B.~Zhang, \emph{Wiener-Ikehara theorems and the Beurling generalized primes,} Monatsh. Math. \textbf{174} (2014), 627--652.

\bibitem{zhang2014} W.-B.~Zhang, \emph{A proof of a conjecture of Bateman and Diamond on Beurling generalized primes}, Monatsh. Math. \textbf{176} (2015), 637--656.

\end{thebibliography}
\end{document}